\newtheorem{thm}{Theorem}[section]
\newtheorem{lem}[thm]{Lemma}
\newtheorem{cor}[thm]{Corollary}
\newtheorem{pro}[thm]{Proposition}
\newtheorem{ex}[thm]{Example}
\newtheorem{rmk}[thm]{Remark}
\newtheorem{defi}[thm]{Definition}
\newcommand {\emptycomment}[1]{}
\newcommand {\yh}[1]{{\marginpar{*}\scriptsize\textcolor{purple}{yh: #1}}}
\newcommand{\lon }{\,\rightarrow\,}
\newcommand{\be }{\begin{equation}}
\newcommand{\ee }{\end{equation}}
\newcommand{\g}{\mathfrak g}
\newcommand{\huaB}{\mathcal{B}}%{{\mathcal{E}}}%{\mathcal{B}}
\newcommand{\huaL}{\mathcal{L}}
\newcommand{\huaD}{\mathcal{D}}
\newcommand{\huaO}{{\mathcal{O}}}
\newcommand{\CWM}{C^{\infty}(M)}
\newcommand{\frkB}{\mathfrak B}
\newcommand{\frkL}{\mathfrak L}
\newcommand{\Id}{\rm{Id}}
\newcommand{\br}[1]{   [ \cdot,    \cdot  ]   }
\newcommand{\dM}{\mathrm{d}}
\newcommand{\Hom}{\mathrm{Hom}}
\newcommand{\gl}{\mathfrak {gl}}
\newcommand{\ad}{\mathrm{ad}}
\newcommand{\K}{\mathbb{K}}
\newcommand{\reg}{\mathrm{reg}}
\newcommand{\PreLie}{\mathsf{PreLie}}
\newcommand{\Zinb}{\mathsf{Zinb}}
\newcommand{\Perm}{\mathsf{Perm}}
\newcommand{\Leib}{\mathsf{Leib}}
\newcommand{\FMan}{\mathsf{FMan}}
\newcommand{\PreFMan}{\mathsf{PreFMan}}
\begin{document}

\title[$F$-manifold algebras and deformation quantization via pre-Lie algebras]{$F$-manifold algebras and deformation quantization via pre-Lie algebras}

\author{Jiefeng Liu}
\address{School of Mathematics and Statistics, Northeast Normal University, Changchun 130024, Jilin, China}
\email{liujf12@126.com}

\author{Yunhe Sheng}
\address{Department of Mathematics, Jilin University, Changchun 130012, Jilin, China}
\email{shengyh@jlu.edu.cn}

\author{Chengming Bai}
\address{Chern Institute of Mathematics and LPMC, Nankai University,
Tianjin 300071, China}
\email{baicm@nankai.edu.cn}
\vspace{-5mm}

%\date{\today}

\begin{abstract}
The notion of an $F$-manifold algebra is the underlying algebraic structure of an $F$-manifold. We introduce the notion of  pre-Lie formal deformations of commutative associative algebras and show that $F$-manifold algebras are the corresponding semi-classical limits. We study   pre-Lie infinitesimal deformations and extension of pre-Lie $n$-deformation to pre-Lie $(n+1)$-deformation of a commutative associative algebra through the cohomology groups of pre-Lie algebras. We introduce the notions of   pre-$F$-manifold algebras and dual pre-$F$-manifold algebras, and show that a pre-$F$-manifold algebra gives
rise to an $F$-manifold algebra through the sub-adjacent associative algebra and the sub-adjacent Lie algebra.  We use   Rota-Baxter operators, more generally $\huaO$-operators and average operators on $F$-manifold algebras   to construct pre-$F$-manifold algebras and dual pre-$F$-manifold algebras.
\end{abstract}

\keywords{$F$-manifold algebra, pre-Lie deformation quantization, pre-$F$-manifold algebra, Rota-Baxter operator}
\footnotetext{{\it{MSC}}: 17A30, 53D55, 17B38}

%\subjclass[2015]{17B10, 17B56, 17A42}

%\keywords{$F$-manifold algebra, pre-Lie deformation quantization, pre-$F$-manifold algebra, Rota-Baxter operator}

\maketitle

\tableofcontents

\allowdisplaybreaks

%\end{document}

\section{Introduction}\label{sec:intr}

The concept of Frobenius manifolds was introduced by Dubrovin in \cite{Dub95} as a geometrical manifestation of the Witten-Dijkgraaf-Verlinde-Verlinde (WDVV) associativity equations in the $2$-dimensional topological field theories. Hertling and Manin weakened the conditions of a Frobenius manifold and introduced the notion of an $F$-manifold in \cite{HerMa}. Any Frobenius manifold is an $F$-manifold. $F$-manifolds   appear in many fields of mathematics such as singularity theory \cite{Her02}, quantum K-theory \cite{LYP}, integrable systems \cite{DS04,DS11,LPR11},  operad \cite{Merku} and so on.

Motivated by the study of the operad of the underlying algebraic structure of an $F$-manifold,  Dotsenko introduced the notion of an $F$-manifold algebra in \cite{Dot} and showed that the graded object of the filtration of the operad encoding pre-Lie algebras is the operad encoding $F$-manifold algebras. The slogan for $F$-manifold algebras promoted by Dotsenko is that ``$F$-manifold algebras are the same to pre-Lie algebras as Poisson algebras to associative algebras". It is well-known that Poisson algebras can be understood as semi-classical limits of associative formal deformations of commutative associative algebras. So it is natural to ask whether $F$-manifold algebras can be understood as certain semi-classical limits.

The first aim of this paper is to answer the above question. We introduce the notion of  pre-Lie formal deformations of commutative associative algebras and show that $F$-manifold algebras are the corresponding semi-classical limits. Note that a commutative pre-Lie algebra is associative and hence the aforementioned formal deformation can put into
an extent of ``pure" pre-Lie algebras, that is, $F$-manifold algebras are the semi-classical limits of pre-Lie formal deformations of commutative pre-Lie algebras. This result is parallel to that the semi-classical limit of an associative formal deformation of a commutative associative algebra is a Poisson algebra and   illustrates the slogan promoted by Dotsenko. Viewing the commutative associative algebra as a pre-Lie algebra, we show that pre-Lie infinitesimal deformation and extension of pre-Lie $n$-deformation to pre-Lie $(n+1)$-deformation of a commutative associative algebra are classified by the second and the third cohomology groups of the pre-Lie algebra respectively.

The notion of a pre-Poisson algebra was introduced by Aguiar in \cite{A2}, which combines a Zinbiel algebra and a pre-Lie algebra such that some compatibility conditions are satisfied. Zinbiel algebras (also called dual Leibniz algebras) were introduced by Loday in \cite{Lod1} in his study of the algebraic structure behind the cup product on the cohomology groups of the Leibniz algebra. See \cite{Liv,Lod2} for more details on Zinbiel algebras. More importantly, a pre-Poisson algebra gives rise to a  Poisson algebra naturally through the  sub-adjacent commutative associative algebra of the Zinbiel algebra and the sub-adjacent  Lie algebra of the pre-Lie algebra. Conversely,  a Rota-Baxter operator action (more generally an $\huaO$-operator action) on a Poisson algebra gives rise to a pre-Poisson algebra. In this paper, we introduce the notion of a pre-$F$-manifold algebra, which also contains a Zinbiel algebra and a pre-Lie algebra, such that some compatibility conditions are satisfied.  By the sub-adjacent associative algebra and the sub-adjacent Lie algebra, a pre-$F$-manifold algebra  gives rise to an $F$-manifold algebra naturally. We further introduce the notion of a Rota-Baxter operator (more generally an $\huaO$-operator) on an $F$-manifold algebra, which is simultaneously a Rota-Baxter operator on the underlying associative algebra and a Rota-Baxter operator on the underlying Lie algebra. See \cite{A2,Bai2007,BGN2013A,CK,EGK,Gub,GK} for more details on Rota-Baxter operators and $\huaO$-operators. A pre-$F$-manifold algebra can be obtained through the action of a Rota-Baxter operator (more generally an $\huaO$-operator). This coincides with the general theory of splitting of operads \cite{BBGN,PBG}.
 The above relations can be summarized into the following
commutative diagram:
$$
\xymatrix{
\ar[rr] \mbox{{ Zinbiel algebra} + pre-Lie algebra }\ar[d]_{\mbox{sub-adjacent~~}}
                && \mbox{pre-$F$-manifold algebra}\ar[d]_{\mbox{sub-adjacent~~}}\\
\ar[rr] \mbox{{ comm associative algebra} + Lie algebra }\ar@<-1ex>[u]_{\mbox{~~Rota-Baxter ~action}}
                && \mbox{ $F$-manifold algebra. }\ar@<-1ex>[u]_{\mbox{~~Rota-Baxter~ action}}}
$$

It is well known that the Koszul duals of
the operad $\PreLie$ of pre-Lie algebras
and the operad $\Zinb$ of Zinbiel algebras
are the operad $\Perm$ of permutative
algebras and the operad $\Leib$ of Leibniz
algebras respectively (\cite{Lod2,ChaLiv}). We further introduce
the notion of a dual pre-$F$-manifold algebra, which contains a permutative
algebra and a Leibniz algebra such that some compatibility
conditions hold. By the action of an average operator on an
$F$-manifold algebra, we can obtain a dual pre-$F$-manifold algebra.

The paper is organized as follows. In Section \ref{sec:Preliminaries}, we recall pre-Lie algebras, Lie-admissible algebras and the cohomology theory of pre-Lie algebras. In Section \ref{sec:F-algebras and deformations}, first we introduce the notion of $F$-manifold-admissible algebras, which gives rise to $F$-manifold algebras. Various examples on $F$-manifold algebras are given.  Then we introduce the notion of  pre-Lie formal deformations of commutative associative algebras and show that $F$-manifold algebras are the corresponding semi-classical limits. Furthermore, we study pre-Lie infinitesimal deformations and extensions of pre-Lie $n$-deformations to pre-Lie $(n+1)$-deformations of a commutative associative algebra. In Section \ref{sec:pre-F-algebras},  first we study representations of an $F$-manifold algebra. Then we introduce the notions of pre-$F$-manifold algebras and Rota-Baxter operators (more generally $\huaO$-operators) on an $F$-manifold algebra. We show that on one hand, an $\huaO$-operator on an $F$-manifold algebra gives a pre-$F$-manifold algebra, and on the other hand, a pre-$F$-manifold algebra naturally gives an $\huaO$-operator on the sub-adjacent $F$-manifold algebra. More examples of pre-$F$-manifold algebras and $F$-manifold algebras are given. In Section \ref{sec:dual pre-F-algebras}, we introduce the notions of dual pre-$F$-manifold algebras and average operators on an $F$-manifold algebra and show that an average operator on an $F$-manifold algebra gives a dual pre-$F$-manifold algebra naturally.

In this paper, all the vector spaces are over algebraically closed field $\mathbb K$ of characteristic $0$, and finite dimensional.

\vspace{2mm}

\noindent
{\bf Acknowledgements.} This research was  supported by NSFC (11922110,11901501,11931009). C. Bai is also supported by the Fundamental Research Funds for the Central Universities and Nankai ZhiDe Foundation.

\section{Preliminaries}\label{sec:Preliminaries}
In this section, we briefly recall pre-Lie algebras, Lie-admissible algebras and the cohomology theory of pre-Lie algebras.

Pre-Lie algebras are a class of nonassociative algebras appearing
in many fields in
mathematics and mathematical physics (cf. \cite{Bakalov,Ban,ChaLiv,DSV,Lichnerowicz,MT}, and the survey \cite{Burde}). In particular,
as pointed out in \cite{Dot}, the operad of pre-Lie algebras is ``one of the most famous operads in the literature".

\begin{defi}  A {\bf pre-Lie algebra} is a pair $(\g,\ast_\g)$, where $\g$ is a vector space and  $\ast_\g:\g\otimes \g\longrightarrow \g$ is a bilinear multiplication satisfying that for all $x,y,z\in \g$, the associator
$(x,y,z)=(x\ast_\g y)\ast_\g z-x\ast_\g(y\ast_\g z)$ is symmetric in $x,y$,
i.e.
$$(x,y,z)=(y,x,z),\;\;{\rm or}\;\;{\rm
equivalently,}\;\;(x\ast_\g y)\ast_\g z-x\ast_\g(y\ast_\g z)=(y\ast_\g x)\ast_\g z-y\ast_\g(x\ast_\g z).$$
\end{defi}

It is obvious that any associative algebra is a  pre-Lie algebra.

\begin{lem}{\rm (\cite{Burde})}
A commutative pre-Lie algebra is associative.
\end{lem}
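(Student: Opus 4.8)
The plan is to show that the associator $(x,y,z)$ vanishes identically, which is precisely the associativity of $\ast_\g$. I would keep in mind two structural facts about the associator. The first is the pre-Lie axiom, which says it is symmetric in its first two arguments, $(x,y,z)=(y,x,z)$. The second is commutativity of $\ast_\g$, which I expect to produce an \emph{antisymmetry} relating the first and third arguments. The strategy is to derive these two relations and then play them against each other.

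First I would exploit commutativity alone. Starting from $(x,y,z)=(x\ast_\g y)\ast_\g z-x\ast_\g(y\ast_\g z)$, I would commute each of the two outer products and then each of the two inner products, rewriting $(x\ast_\g y)\ast_\g z=z\ast_\g(y\ast_\g x)$ and $x\ast_\g(y\ast_\g z)=(z\ast_\g y)\ast_\g x$. Comparing the resulting expression with the expansion $(z,y,x)=(z\ast_\g y)\ast_\g x-z\ast_\g(y\ast_\g x)$, I expect to read off
$$(x,y,z)=-(z,y,x),$$
i.e. the associator is antisymmetric under interchange of its first and third slots. This step uses only commutativity, not the pre-Lie axiom.

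Next I would combine this antisymmetry with the pre-Lie symmetry $(x,y,z)=(y,x,z)$, applying the two relations in two different orders. On one hand, $(x,y,z)=(y,x,z)=-(z,x,y)=-(x,z,y)$; on the other hand, $(x,y,z)=-(z,y,x)=-(y,z,x)=(x,z,y)$. These two chains give simultaneously $(x,y,z)=-(x,z,y)$ and $(x,y,z)=(x,z,y)$, whence $2(x,y,z)=0$. Since the base field has characteristic $0$, this forces $(x,y,z)=0$ for all $x,y,z$, so $(\g,\ast_\g)$ is associative.

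The computation is short, so there is no serious obstacle; the one place demanding care is the bookkeeping in the first step, where commutativity must be applied to both the outer and the inner products in exactly the right pattern to land on $-(z,y,x)$ rather than on some other reassociation. Everything afterward is purely formal manipulation of the two structural identities together with the hypothesis that $2$ is invertible.
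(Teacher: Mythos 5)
Your proof is correct; note that the paper offers no proof of this lemma at all (it is quoted from Burde's survey), so there is nothing internal to compare against. Your argument is the standard one and each step checks out: commutativity alone gives the antisymmetry $(x,y,z)=-(z,y,x)$, the pre-Lie axiom gives the symmetry $(x,y,z)=(y,x,z)$, and since these two transpositions generate $S_3$ while assigning inconsistent signs, chasing the two chains yields $(x,y,z)=(x,z,y)$ and $(x,y,z)=-(x,z,y)$ simultaneously, so $2(x,y,z)=0$ and the associator vanishes in characteristic $0$.
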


\begin{lem}{\rm(\cite{Burde})}\label{lem:pre-Lie-Lie} Let $(\g,\ast_\g)$ be a pre-Lie algebra. The commutator
$ [x,y]_\g=x\ast_\g y-y\ast_\g x$ defines a Lie algebra structure on $\g$,
which is called the {\bf sub-adjacent Lie algebra} of $(\g,\ast_\g)$ and denoted by $\g^c$. Furthermore, $L:\g\rightarrow
\gl(\g)$ defined by
\begin{equation}\label{eq:defiLpreLie}
L_xy=x\ast_\g y,\quad \forall x,y\in \g
\end{equation}
 gives a representation of $\g^c$ on $\g$.
\end{lem}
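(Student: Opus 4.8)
The plan is to reduce both assertions to a single reformulation of the defining pre-Lie axiom. Antisymmetry of $[x,y]_\g=x\ast_\g y-y\ast_\g x$ is immediate from the definition, so the substantive content is the Jacobi identity together with the representation property; I would treat them together, since they turn out to be the same identity read in two ways.

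The key step is to rewrite the associator symmetry $(x,y,z)=(y,x,z)$ by moving the two right-associated terms to opposite sides, which yields
$$x\ast_\g(y\ast_\g z)-y\ast_\g(x\ast_\g z)=(x\ast_\g y-y\ast_\g x)\ast_\g z=[x,y]_\g\ast_\g z.$$
In terms of the left multiplication $L$ this reads $L_xL_y-L_yL_x=L_{[x,y]_\g}$, that is, $L_{[x,y]_\g}=[L_x,L_y]$. This identity is exactly the representation claim, so the final assertion of the lemma will require no extra argument once $\g^c$ is known to be a Lie algebra.

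For the Jacobi identity I would verify the equivalent Leibniz form $[[x,y]_\g,z]_\g=[x,[y,z]_\g]_\g-[y,[x,z]_\g]_\g$. Expanding each bracket via the commutator definition and substituting the displayed identity for the expressions $[x,y]_\g\ast_\g z$, $[y,z]_\g\ast_\g x$ and $[x,z]_\g\ast_\g y$, the right-hand side collapses: the terms $x\ast_\g(z\ast_\g y)$ and $y\ast_\g(z\ast_\g x)$ cancel, and what remains is precisely the expansion of the left-hand side. There is no genuine obstacle here; the only thing to monitor is the bookkeeping in this cancellation, which is purely mechanical once the reformulated axiom is in hand. The conceptual point worth stressing is that the pre-Lie axiom is, after this rearrangement, literally the statement that $L$ sends the commutator to the commutator of operators, so the sub-adjacent Lie bracket and its representation $L$ are two faces of the same identity.
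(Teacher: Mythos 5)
Your proof is correct. Note first that the paper does not actually prove this lemma: it is quoted from Burde's survey, so there is no internal proof to compare against. What the paper does provide, immediately after the lemma, is the notion of a Lie-admissible algebra, namely the vanishing of the alternating sum $(x,y,z)-(y,x,z)+(y,z,x)-(z,y,x)+(z,x,y)-(x,z,y)$, together with the remark that a pre-Lie algebra is obviously Lie-admissible; that is the route usually taken for the Jacobi identity, since the Jacobi sum for the commutator of any bilinear product is exactly this alternating associator sum, and the pre-Lie symmetry kills it in three cancelling pairs. Your route is genuinely different and arguably more economical: you first recast the pre-Lie axiom as $L_{[x,y]_\g}=[L_x,L_y]$ and then use that single identity both to verify Jacobi (in its Leibniz form) and to obtain the representation statement with no further work, whereas the Lie-admissibility argument settles only the Jacobi identity and still requires the separate computation $L_{[x,y]_\g}=[L_x,L_y]$ for the final claim of the lemma. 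Your bookkeeping also checks out: after substituting $[x,y]_\g\ast_\g z=x\ast_\g(y\ast_\g z)-y\ast_\g(x\ast_\g z)$ and its analogues for $[y,z]_\g\ast_\g x$ and $[x,z]_\g\ast_\g y$, the terms $x\ast_\g(z\ast_\g y)$ and $y\ast_\g(z\ast_\g x)$ are indeed the only ones that cancel on the right-hand side, and the remaining four terms match the expansion of $[[x,y]_\g,z]_\g$ exactly.
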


A {\bf Lie-admissible algebra} is a nonassociative algebra $(\g,\ast_\g)$ whose commutator algebra is a Lie algebra.
More precisely, it is equivalent to the following condition:
\begin{equation}
  (x,y,z)-(y,x,z)+(y,z,x)-(z,y,x)+(z,x,y)-(x,z,y)=0,\quad \forall x,y,z\in \frak g.
\end{equation}

Obviously, a pre-Lie algebra is a Lie-admissible algebra.

\begin{defi}
Let $(\g,\ast_\g)$ be a pre-Lie algebra and $V$  a vector
space. A {\bf representation} of $\g$ on $V$ consists of a pair
$(\rho,\mu)$, where $\rho:\g\longrightarrow \gl(V)$ is a representation
of the Lie algebra $\g^c$ on $V $ and $\mu:\g\longrightarrow \gl(V)$ is a linear
map satisfying \begin{eqnarray}\label{representation condition 2}
 \rho(x)\mu(y)u-\mu(y)\rho(x)u=\mu(x\ast_\g y)u-\mu(y)\mu(x)u, \quad \forall~x,y\in \g,~ u\in V.
\end{eqnarray}
\end{defi}

Usually, we denote a representation by $(V;\rho,\mu)$. Let $R:\g\rightarrow
\gl(\g)$ be a linear map with $x\longrightarrow R_x$, where the
linear map $R_x:\g\longrightarrow\g$  is defined by
$R_x(y)=y\ast_\g x,$ for all $x, y\in \g$. Then
$(\g;\rho=L,\mu=R)$ is a representation, which we call the
{\bf regular representation}. Define two linear maps $L^*,R^*:\g\longrightarrow
\gl(\g^*)$   with $x\longrightarrow L^*_x$ and
$x\longrightarrow R^*_x$ respectively (for all $x\in \g$)
by
\begin{equation}
\langle L_x^*(\xi),y\rangle=-\langle \xi, x\ast_\g y\rangle, \;\;
\langle R_x^*(\xi),y\rangle=-\langle \xi, y\ast_\g x\rangle, \;\;
\forall x, y\in \g, \xi\in \g^*.
\end{equation}
Then $(\g^*;\rho={\rm ad}^*=L^*-R^*, \mu=-R^*)$ is a
representation of $(\g,\cdot_\g)$.

The cohomology complex for a pre-Lie algebra $(\g,\ast_\g)$ with a representation $(V;\rho,\mu)$ is given as follows (\cite{cohomology of pre-Lie}).
The set of $n$-cochains is given by
$C^n(\g,V):=\Hom(\wedge^{n-1}\g\otimes \g,V),\
n\geq 1.$  For all $\phi\in C^{n}(\g,V)$, the coboundary operator $\dM:C^{n}(\g,V)\longrightarrow C^{n+1}(\g,V)$ is given by
 \begin{eqnarray}\label{eq:pre-Lie cohomology}
 \dM\phi(x_1, \cdots,x_{n+1})
 \nonumber&=&\sum_{i=1}^{n}(-1)^{i+1}\rho(x_i)\phi(x_1, \cdots,\hat{x_i},\cdots,x_{n+1})\\
\nonumber &&+\sum_{i=1}^{n}(-1)^{i+1}\mu(x_{n+1})\phi(x_1, \cdots,\hat{x_i},\cdots,x_n,x_i)\\
 \nonumber&&-\sum_{i=1}^{n}(-1)^{i+1}\phi(x_1, \cdots,\hat{x_i},\cdots,x_n,x_i\ast_\g x_{n+1})\\
\label{eq:cobold} &&+\sum_{1\leq i<j\leq n}(-1)^{i+j}\phi([x_i,x_j]_\g,x_1,\cdots,\hat{x_i},\cdots,\hat{x_j},\cdots,x_{n+1}),
\end{eqnarray}
for all $x_i\in \g,~i=1,\cdots,n+1$. In particular, we use the
symbol $\dM^{\reg}$ to refer the coboundary operator  associated to the regular representation. We denote the
$n$-th cohomology group for the coboundary operator $\dM^{\reg}$  by
$H_{\rm reg}^n(\g,\g)$ and $H_{\rm reg}(\g,\g)=\oplus_{n\geq1}H_{\rm reg}^n(\g,\g)$.

\section{$F$-manifold algebras and pre-Lie deformations of commutative associative algebras}\label{sec:F-algebras and deformations}

\subsection{$F$-manifold algebras and $F$-manifold-admissible algebras}

In this subsection, we introduce the notion of $F$-manifold-admissible algebras, which give rise to $F$-manifold algebras. Various examples are given.
\begin{defi}{\rm (\cite{Dot})}
An {\bf $F$-manifold algebra} is a triple $(A,\cdot_A,[-,-]_A)$, where $(A,\cdot_A)$ is a commutative associative algebra and $(A,[-,-]_A)$ is a Lie algebra, such that for all $x,y,z,w\in A$, the Hertling-Manin relation holds:
\begin{equation}\label{eq:HM relation}
P_{x\cdot_A y}(z,w)=x\cdot_A P_{y}(z,w)+y\cdot_A P_{x}(z,w),
\end{equation}
where $P_{x}(y,z)$ is defined by
\begin{equation}
 P_{x}(y,z)=[x,y\cdot_A z]_A-[x,y]_A\cdot_A z-y\cdot_A [x,z]_A.
\end{equation}
\end{defi}

\begin{rmk}
 For any elements $z,w\in A$, the Hertling-Manin relation means that the linear map $P_{-}(z,w):A\longrightarrow A$ defined by $P_{-}(z,w)(x):=P_x(z,w)$ is a Hochschild $2$-cocycle of the commutative associative algebra $(A,\cdot_A)$.
\end{rmk}

\begin{rmk}
  An $F$-manifold is a pair $(M,\cdot)$, where $M$ is a manifold, $\cdot$ is a $\CWM$-bilinear, commutative,
associative multiplication on the tangent bundle $TM$, such that the Hertling-Manin relation holds, where the Lie bracket in the Hertling-Manin relation is the usual Lie bracket of vector fields. Thus the underlying algebraic structure of an $F$-manifold (ignore the geometry of the arguments) is an $F$-manifold algebra.
\end{rmk}

\begin{defi}
Let $(A,\cdot_A,[-,-]_A)$ and $(B,\cdot_B,[-,-]_B)$ be two $F$-manifold algebras. A {\bf homomorphism} between $A$ and $B$ is a linear map $\varphi:A\rightarrow B$ such that
\begin{eqnarray}
 \varphi(x\cdot_A y)&=&\varphi(x)\cdot_B\varphi(y),\\
  \varphi[x,y]_A&=&[\varphi(x),\varphi(y)]_B,\quad \forall~x,y\in A.
\end{eqnarray}
\end{defi}

\begin{ex}\label{ex:direct sum of HMA}{\rm
  Let $(A,\cdot_A,[-,-]_A)$ and $(B,\cdot_B,[-,-]_B)$ be two $F$-manifold algebras. Then $(A\oplus B,\cdot_{A\oplus B},[-,-]_{A\oplus B})$ is an $F$-manifold algebra, where the product $\cdot_{A\oplus B}$ and bracket $[-,-]_{A\oplus B}$ are given by
  \begin{eqnarray*}
   ( x_1+ x_2) \cdot_{A\oplus B} (y_1+ y_2)&=& x_1\cdot_A y_1+x_2\cdot_B y_2,\\
   {[  ( x_1+ x_2), (y_1+ y_2)]_{A\oplus B}}&=&[x_1,y_1]_A+ [x_2, y_2]_B
  \end{eqnarray*}
  for all $x_1,y_1\in A,x_2,y_2\in B.$}
\end{ex}

\begin{ex}\label{ex:tensor of HMA}{\rm
  Let $(A,\cdot_A,[-,-]_A)$ and $(B,\cdot_B,[-,-]_B)$ be two $F$-manifold algebras. Then $(A\otimes B,\cdot_{A\otimes B},[-,-]_{A\otimes B})$ is an $F$-manifold algebra, where the product $\cdot_{A\otimes B}$ and bracket $[-,-]_{A\otimes B}$ are given by
  \begin{eqnarray*}
   ( x_1\otimes x_2) \cdot_{A\otimes B} (y_1\otimes y_2)&=& (x_1\cdot_A y_1) \otimes (x_2\cdot_B y_2),\\
   {[ x_1\otimes x_2, y_1\otimes y_2]_{A\otimes B}}&=&[x_1,y_1]_A \otimes (x_2\cdot_B y_2)+(x_1\cdot_A y_1) \otimes [x_2,y_2]_B
  \end{eqnarray*}
  for all $x_1,y_1\in A,x_2,y_2\in B.$}
\end{ex}

Recall that a {\bf Poisson algebra} is a triple $(P,\cdot_P,\{-,-\}_P)$, where $(P,\cdot_P)$ is a commutative associative algebra and $(P,\{-,-\}_P)$ is a Lie algebra, such that the Leibniz rule holds:
$$\{x,y\cdot_P z\}_P=\{x,y\}_P\cdot_P z+y\cdot_P\{x,z\}_P,\quad \forall~ x,y,z\in P.$$

\begin{ex}{\rm
Any Poisson algebra is an $F$-manifold algebra.}
\end{ex}

The notion of PreLie-Com algebras was given in \cite{Foissy,Mansuy}.
\begin{defi}
A {\bf pre-Lie commutative algebra (or PreLie-Com algebra)} is a triple $(A,\cdot_A,\ast_A)$, where $(A,\cdot_A)$ is a commutative associative algebra and $(A,\ast_A)$ is a pre-Lie algebra satisfying
\begin{equation}
  x\ast_A(y\cdot_A z)-(x\ast_A y)\cdot_A z-y\cdot_A(x\ast_A z)=0,\quad\forall~x,y,z\in A.
\end{equation}
\end{defi}

\begin{cor}{\rm(\cite{Dot})}
  Let $(A,\cdot_A,\ast_A)$ be a PreLie-Com algebra. Then $(A,\cdot_A,[-,-]_A)$ is an $F$-manifold algebra, where the bracket $[-,-]_A$ is given by
\begin{equation}\label{eq:pseudo-bracket}
[x,y]_A=x\ast_A y-y\ast_A x,\quad\forall~x,y\in A.
\end{equation}
\end{cor}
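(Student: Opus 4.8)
The plan is to verify the three defining conditions of an $F$-manifold algebra for the triple $(A,\cdot_A,[-,-]_A)$. That $(A,\cdot_A)$ is commutative associative is part of the hypothesis, and that $(A,[-,-]_A)$ is a Lie algebra is immediate from Lemma \ref{lem:pre-Lie-Lie}, since $(A,\ast_A)$ is pre-Lie and $[x,y]_A=x\ast_A y-y\ast_A x$ is exactly its sub-adjacent Lie bracket. Hence the entire content lies in checking the Hertling-Manin relation \eqref{eq:HM relation}.

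The observation I would isolate first is that the PreLie-Com compatibility condition says precisely that, for each fixed $x$, left multiplication $L^{\ast}_x:=x\ast_A(-)$ is a derivation of the commutative associative algebra $(A,\cdot_A)$, that is $x\ast_A(u\cdot_A v)=(x\ast_A u)\cdot_A v+u\cdot_A(x\ast_A v)$ for all $u,v\in A$. This single identity drives the whole computation, and I would use it twice.

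First I would rewrite $P_x(z,w)$ purely in terms of $\ast_A$. Expanding each bracket via $[a,b]_A=a\ast_A b-b\ast_A a$ produces six terms, of which the three forming $x\ast_A(z\cdot_A w)-(x\ast_A z)\cdot_A w-z\cdot_A(x\ast_A w)$ cancel by the derivation property, leaving the clean expression
\[
P_x(z,w)=(z\ast_A x)\cdot_A w+z\cdot_A(w\ast_A x)-(z\cdot_A w)\ast_A x .
\]
To evaluate the left-hand side $P_{x\cdot_A y}(z,w)$ I would substitute $x\cdot_A y$ for $x$ in this formula; here $x$ occurs only as the right argument of $\ast_A$, so the three arising terms all have the form $a\ast_A(x\cdot_A y)$ with $a\in\{z,w,z\cdot_A w\}$, and each expands again by the derivation property into $(a\ast_A x)\cdot_A y+x\cdot_A(a\ast_A y)$. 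Expanding the right-hand side $x\cdot_A P_y(z,w)+y\cdot_A P_x(z,w)$ directly from the clean formula, I would then match the resulting monomials: using commutativity and associativity of $\cdot_A$ to reorder factors, the six terms on each side pair up exactly.

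There is no genuine obstacle here; the proof is a direct verification and the only real care needed is the bookkeeping of the monomials, all of whose simplification is controlled by the one derivation identity. Conceptually, the cleanest way to organize the final step is to note, as in the Remark, that the relation to be proved is exactly the statement that for fixed $z,w$ the map $P_{-}(z,w)$ is a derivation of $(A,\cdot_A)$, so one is really checking that this particular map is a derivation.
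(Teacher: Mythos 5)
Your proof is correct, but it follows a different route than the paper. The paper itself states this corollary as a citation to Dotsenko and does not prove it directly; instead it is subsumed under the more general Theorem \ref{pro:pseudo-pre-HMA-HMA}: a PreLie-Com algebra is an $F$-manifold-admissible algebra (both sides of \eqref{eq:pseudo-pre-HM1} vanish, and a pre-Lie algebra is Lie-admissible), so the corollary follows as a special case. The paper's proof of that theorem must work with only the two-sided symmetric identity \eqref{eq:pseudo-pre-HM1}, where neither side need vanish, and this forces a considerably longer expansion of $P_{x\cdot_A y}(z,w)$. You instead exploit what is special to the PreLie-Com case, namely that each $L^{\ast}_x=x\ast_A(-)$ is an honest derivation of $(A,\cdot_A)$; this yields the clean closed form $P_x(z,w)=(z\ast_A x)\cdot_A w+z\cdot_A(w\ast_A x)-(z\cdot_A w)\ast_A x$, after which the Hertling-Manin relation \eqref{eq:HM relation} reduces to a six-term monomial matching. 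I verified this matching and it is exact. What your approach buys is brevity and transparency for this special case (it also makes visible why the result is ``really'' about $P_{-}(z,w)$ being a derivation, as in the Remark); what it gives up is generality, since the derivation property fails for $F$-manifold-admissible algebras and hence your argument cannot replace the paper's Theorem \ref{pro:pseudo-pre-HMA-HMA}, of which pre-Lie formal deformations (Theorem \ref{thm:deformation quantization}) are the main consumer.
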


Let $A=\K[x_1,x_2,\cdots,x_n]$ be the algebra of polynomials in $n$ variables. Let $$\huaD_n=\{\partial_{x_1},\partial_{x_2},\ldots,\partial_{x_n}\}$$ be the system of derivations over $A$. For any  polynomial $f\in A$, the endomorphisms
  $$f\partial_{x_i}:A\longrightarrow A,\quad (f\partial_{x_i})(g)=f\partial_{x_i}(g),\quad \forall~g\in A$$
  are derivations of $A$. Denote by $A\huaD_n=\{\sum_{i=1}^nf_i\partial_{x_i}\mid f_i\in A,\partial_{x_i}\in\huaD_n\}$ the space of derivations.

\begin{ex}\label{ex:F-algebra-poly}\label{ex:poly}
{\rm
  Let $A$ be the algebra of polynomials in $n$ variables. Define $\cdot:A\huaD_n\times A\huaD_n\longrightarrow A\huaD_n $ and $\ast:A\huaD_n\times A\huaD_n\longrightarrow A\huaD_n$ by
  \begin{eqnarray*}
  (f\partial_{x_i})\cdot (g\partial_{x_j})&=&(fg)\delta_{ij}\partial_{x_i},\\
  (f\partial_{x_i})\ast (g\partial_{x_j})&=&f\partial_{x_i}(g)\partial_{x_j},\quad\forall~f,g\in A.
  \end{eqnarray*}
  Then $(A\huaD_n,\cdot,\ast)$ is a PreLie-Com algebra. Furthermore, $(A\huaD_n,\cdot,[-,-])$ is an $F$-manifold algebra, where the bracket is given by
  $$ [f\partial_{x_i},g\partial_{x_j}]=f\partial_{x_i}(g)\partial_{x_j}-g\partial_{x_j}(f)\partial_{x_i},\quad\forall~f,g\in A.$$}
\end{ex}

\begin{ex}\rm{
 Let $A=\K[x_1,x_2]$ be the algebra of polynomials in two variables. Besides the PreLie-Com algebra structure given in Example~\ref{ex:poly} on
 $A\huaD_2$, there is another PreLie-Com algebra  $(A\huaD_2,\cdot,\ast)$, where the operations $\cdot$ and  $\ast$ are determined by
\begin{eqnarray*}
 \partial_{x_1}\cdot \partial_{x_1}=\partial_{x_1},~\quad
 \partial_{x_1}\cdot \partial_{x_2}&=& \partial_{x_2}\cdot \partial_{x_1}=\partial_{x_2},~
  \quad \partial_{x_2}\cdot \partial_{x_2}=0,\\
  (f\partial_{x_i})\ast (g\partial_{x_j})&=&f\partial_{x_i}(g)\partial_{x_j},\quad\forall~f,g\in A~(i,j=1,2).
  \end{eqnarray*}
Furthermore, $(A\huaD_2,\cdot,[-,-])$ is an $F$-manifold algebra, where the bracket is given by
  $$ [f\partial_{x_i},g\partial_{x_j}]=f\partial_{x_i}(g)\partial_{x_j}-g\partial_{x_j}(f)\partial_{x_i},\quad\forall~f,g\in A.$$}
\end{ex}

In the sequel, we introduce the notion of   $F$-manifold-admissible algebras, which include PreLie-Com algebras as special cases, and can be used to construct $F$-manifold algebras.
\begin{defi}\label{defi:pseudo-pre-HMA}
An {\bf $F$-manifold-admissible algebra} is a vector space $A$ equipped with two bilinear maps $\cdot_A:A\times A\rightarrow A$ and $\ast_A:A\times A\rightarrow A$ such that $(A,\cdot_A)$ is a commutative associative algebra and $(A,\ast_A)$ is a Lie-admissible algebra satisfying for all $x,y,z\in A$,
\begin{equation}\label{eq:pseudo-pre-HM1}
x\ast_A(y\cdot_A z)-(x\ast_A y)\cdot_A z-y\cdot_A(x\ast_A z)=y\ast_A(x\cdot_A z)-(y\ast_A x)\cdot_A z-x\cdot_A(y\ast_A z).
\end{equation}
\end{defi}

\begin{rmk}
Viewing the associative algebra $A$ as a pre-Lie algebra, the condition \eqref{eq:pseudo-pre-HM1} means that the $F$-manifold admissible operation $\ast_A:A\times A\rightarrow A$ is a $2$-cocycle of the pre-Lie algebra $(A,\cdot_A) $ with the coefficients in the regular representation, i.e. $\dM^{\reg}(\ast_A)=0$.
\end{rmk}

\begin{thm}\label{pro:pseudo-pre-HMA-HMA}
  Let $(A,\cdot_A,\ast_A)$ be an $F$-manifold-admissible algebra. Then $(A,\cdot_A,[-,-]_A)$ is an $F$-manifold algebra, where $[-,-]_A$ is given by \eqref{eq:pseudo-bracket}.
\end{thm}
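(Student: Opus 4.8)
Since $(A,\cdot_A)$ is commutative associative by hypothesis, and $[-,-]_A$ from \eqref{eq:pseudo-bracket} is a Lie bracket because $(A,\ast_A)$ is Lie-admissible, the only thing left to verify is the Hertling--Manin relation \eqref{eq:HM relation}. As the Remark after \eqref{eq:HM relation} notes, this is equivalent to saying that for each fixed $z,w$ the map $x\mapsto P_x(z,w)$ is a derivation of $(A,\cdot_A)$, i.e.\ that the \emph{defect}
\[
\Delta(x,y;z,w):=P_{x\cdot_A y}(z,w)-x\cdot_A P_y(z,w)-y\cdot_A P_x(z,w)
\]
vanishes. The plan is to reduce $\Delta$ to a single symmetry statement about an auxiliary trilinear form.

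First I would introduce the left and right Leibniz defects of $\ast_A$ against $\cdot_A$,
\[
Q_x(y,z)=x\ast_A(y\cdot_A z)-(x\ast_A y)\cdot_A z-y\cdot_A(x\ast_A z),
\]
\[
\bar Q_x(y,z)=(y\cdot_A z)\ast_A x-(y\ast_A x)\cdot_A z-(z\ast_A x)\cdot_A y.
\]
Expanding $[-,-]_A$ in the definition of $P$ gives the clean decomposition $P_x(z,w)=Q_x(z,w)-\bar Q_x(z,w)$. Moreover condition \eqref{eq:pseudo-pre-HM1} says precisely $Q_x(y,z)=Q_y(x,z)$, while commutativity of $\cdot_A$ forces $Q_x(y,z)=Q_x(z,y)$; hence $Q$ is \emph{totally symmetric} in its three arguments. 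This is the cocycle reformulation $\dM^{\reg}(\ast_A)=0$ of the Remark, now used as a computational tool.

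Writing $\lambda(x,y;z,w):=Q_{x\cdot_A y}(z,w)-x\cdot_A Q_y(z,w)-y\cdot_A Q_x(z,w)$ for the Leibniz defect of $Q(-,z,w)$, I would substitute $P=Q-\bar Q$ into $\Delta$. The $Q$-part of $\Delta$ is $\lambda(x,y;z,w)$ by definition, and a direct rearrangement using only commutativity and associativity of $\cdot_A$ identifies the $\bar Q$-part with $\lambda(z,w;x,y)$. Thus the whole problem collapses to the single identity
\[
\lambda(x,y;z,w)=\lambda(z,w;x,y),
\]
the invariance of $\lambda$ under interchanging the pair $\{x,y\}$ with the pair $\{z,w\}$.

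The hard part is exactly this interchange symmetry, and the danger is circularity: using \eqref{eq:pseudo-pre-HM1} directly only reshuffles the eighteen monomials of $\Delta$ without reducing them. Progress comes from feeding \emph{composite} arguments into the total symmetry of $Q$. Using $Q_{x\cdot_A y}(z,w)=Q_z(w,x\cdot_A y)$ I would rewrite $\lambda(x,y;z,w)$ as the Leibniz defect of $a\mapsto Q_z(w,a)$ at $x\cdot_A y$; expanding this defect and cancelling the $(z\ast_A w)$-terms by associativity yields the closed form $\lambda(x,y;z,w)=Q_z(w\cdot_A x,y)-x\cdot_A Q_z(w,y)-w\cdot_A Q_z(x,y)$. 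Applying total symmetry once more to the leading term $Q_z(w\cdot_A x,y)$ then produces the non-obvious relation $\lambda(x,y;z,w)=\lambda(w,x;z,y)$. Combining it with the manifest symmetries $\lambda(x,y;z,w)=\lambda(y,x;z,w)=\lambda(x,y;w,z)$ along the chain
\[
(x,y,z,w)\to(w,x,z,y)\to(w,x,y,z)\to(z,w,y,x)\to(z,w,x,y)
\]
gives the required pair-interchange, so $\Delta\equiv 0$ and \eqref{eq:HM relation} holds. I expect the bookkeeping in passing to the closed form for $\lambda$ to be the only delicate point; note that Lie-admissibility is never used here, consistent with the absence of nested $\ast_A$-products in $P$, so it is needed only to make $[-,-]_A$ a Lie algebra.
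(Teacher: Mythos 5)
Your proposal is correct, and it takes a genuinely different route from the paper's own proof. The paper verifies the Hertling--Manin relation by brute force: it first uses \eqref{eq:pseudo-pre-HM1} to rewrite $P_x(y,z)$, then fully expands $P_{x\cdot_A y}(z,w)$, applying \eqref{eq:pseudo-pre-HM1} again to composite arguments such as $z\ast_A(x\cdot_A y)$ and $(z\cdot_A w)\ast_A x$, and regroups the many resulting monomials into $x\cdot_A P_y(z,w)+y\cdot_A P_x(z,w)$; no auxiliary structure is introduced and the displayed computation is long and hard to audit. You instead organize the same ingredients around the defect tensor $Q$: the observation that \eqref{eq:pseudo-pre-HM1} is exactly symmetry of $Q$ in its first two arguments, while commutativity of $\cdot_A$ gives symmetry in the last two, so $Q$ is totally symmetric; the decomposition $P_x=Q_x-\bar Q_x$ then reduces \eqref{eq:HM relation} to the pair-interchange symmetry $\lambda(x,y;z,w)=\lambda(z,w;x,y)$. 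I checked the steps you leave as ``direct rearrangements'': the identification of the $\bar Q$-part of $\Delta$ with $\lambda(z,w;x,y)$, the closed form $\lambda(x,y;z,w)=Q_z(w\cdot_A x,y)-x\cdot_A Q_z(w,y)-w\cdot_A Q_z(x,y)$, and the consequence $\lambda(x,y;z,w)=\lambda(w,x;z,y)$ all hold using only commutativity, associativity, and total symmetry of $Q$, and your symmetry chain then closes the argument. What your organization buys is transparency: the Hertling--Manin defect is exhibited as the antisymmetrization of $\lambda$ under pair interchange, the hypothesis enters through a single symmetry statement (precisely the cocycle condition $\dM^{\reg}(\ast_A)=0$ from the paper's remark, now used as the computational engine rather than an aside), and it becomes explicit that Lie-admissibility is needed only so that $[-,-]_A$ satisfies the Jacobi identity --- a point the paper leaves implicit. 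What the paper's version buys is only that it avoids introducing $Q$, $\bar Q$ and $\lambda$; your argument is both easier to verify and more likely to generalize.
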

\begin{proof}
  We only need to verify the Hertling-Manin relation. By \eqref{eq:pseudo-pre-HM1}, we have
  \begin{eqnarray*}
    P_{x}(y,z)&=&[x,y\cdot_A z]_A-[x,y]_A\cdot_A z-y\cdot_A [x,z]_A\\
    &=&x\ast_A (y\cdot_A z)-(y\cdot_A z)\ast_A x-(x\ast_A y)\cdot_A z+(y\ast_A x)\cdot_A z\\
    &&-y\cdot_A(x\ast_A z)+y\cdot_A(z\ast_A x)\\
    &=&x\ast_A (y\cdot_A z)-(x\ast_A y)\cdot_A z-y\cdot_A(x\ast_A z)-(y\cdot_A z)\ast_A x\\
    &&+(y\ast_A x)\cdot_A z+y\cdot_A(z\ast_A x)\\
    &=&y\ast_A (x\cdot_A z)-(y\ast_A x)\cdot_A z-x\cdot_A(y\ast_A z)-(y\cdot_A z)\ast_A x\\
    &&+(y\ast_A x)\cdot_A z+y\cdot_A(z\ast_A x).
  \end{eqnarray*}
  By this formula and \eqref{eq:pseudo-pre-HM1}, we have
  \begin{eqnarray*}
    P_{x\cdot_A y}(z,w)&=&z\ast_A ((x\cdot_A y)\cdot_A w)-(z\ast_A(x\cdot_A y))\cdot_A w-(x\cdot_A y)\cdot_A(z\ast_A w)-(z\cdot_A w)\ast_A (x\cdot_A y)\\
    &&+(z\ast_A (x\cdot_A y))\cdot_A w+z\cdot_A(w\ast_A (x\cdot_A y))\\
    &=&z\ast_A (x\cdot_A (y\cdot_A w))-\big((z\ast_A x)\cdot_A y-x\cdot_A(z\ast_A y)+x\ast_A(z\cdot_A y)-(x\ast_A z)\cdot_A y\\
    &&-z\cdot_A(x\ast_A y)\big)\cdot_A w-(x\cdot_A y)\cdot_A(z\ast_A w)-\big(((z\cdot_A w)\ast_A x)\cdot_A y+x\cdot_A((z\cdot_A w)\ast_A y)\\
    &&+x\ast_A((z\cdot_A w)\cdot_A y)-(x\ast_A (z\cdot_A w)\cdot_A y)-(z\cdot_A w)\cdot_A (x\ast_A y)\big)+\big((z\ast_A x)\cdot_A y\\
    &&+x\cdot_A(z\ast_A y)+x\ast_A(z\cdot_A y)-(x\ast_A z)\cdot_A y-z\cdot_A(x\ast_A y))\big)\cdot_A w+\big((w\ast_A x)\cdot_A y\\
    &&+x\cdot_A (w\ast_A y)+x\ast_A (w\cdot_A y)-(x\ast_A w)\cdot_A y-w\cdot_A(x\ast_A y)\big)\cdot_A z\\
    &=&y\cdot_A P_x(z,w)+x\cdot_A \big(-w\cdot_A (z\ast_A y)-y\cdot_A(z\ast_A w)-(z\cdot_A w)\ast_A y+(z\ast y)\cdot_A w\\
    &&+(w\ast_A y)\cdot_A z\big)+\big(z\ast_A (x\cdot_A (y\cdot_A w))-(z\ast_A x)\cdot_A (y\cdot_A w)\\
    &&-x\ast_A (z\cdot_A (y\cdot_A w))+(x\ast_A z)\cdot_A (y\cdot_A w)+z\cdot_A(x\ast_A(y\cdot_A w))\big)\\
    &=&y\cdot_A P_x(z,w)+x\cdot_A \big(-w\cdot_A (z\ast_A y)-y\cdot_A(z\ast_A w)-(z\cdot_A w)\ast_A y+(z\ast y)\cdot_A w\\
    &&+(w\ast_A y)\cdot_A z+\big)+x\cdot_A(z\ast_A (y\cdot_A w))\\
    &=&y\cdot_A P_x(z,w)+x\cdot_A P_y(z,w).
  \end{eqnarray*}
  This shows that the Hertling-Manin relation holds.
\end{proof}

\begin{pro}\label{ex:derivation-HMA}
 Let $(A,\cdot)$ be a commutative associative algebra with a derivation $D$. Then the new product
  \begin{eqnarray*}
    x\ast_a y&=&x\cdot D (y)+a\cdot x\cdot y,\quad\forall~x,y\in A
  \end{eqnarray*}
  makes $(A,\cdot,\ast_a)$ being an $F$-manifold-admissible algebra for any fixed $a\in \K$ or $a\in A$. In particular, for $a=0$, $(A,\cdot,\ast_0)$ is a PreLie-Com algebra. Furthermore, $(A,\cdot,[-,-])$ is an $F$-manifold algebra, where the bracket is given by
  $$ [x,y]=x\ast_a y-y\ast_a x=x\cdot D( y)-y\cdot D(x),\quad\forall~x,y\in A.$$
\end{pro}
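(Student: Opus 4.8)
The plan is to verify directly the two defining conditions of an $F$-manifold-admissible algebra (Definition~\ref{defi:pseudo-pre-HMA}) for $(A,\cdot,\ast_a)$ and then to apply Theorem~\ref{pro:pseudo-pre-HMA-HMA}. The argument hinges on two simple observations. Since $a\cdot x\cdot y$ is symmetric in $x,y$, it cancels in the commutator, so $[x,y]=x\ast_a y-y\ast_a x=x\cdot D(y)-y\cdot D(x)$ carries no dependence on $a$; and in all the expressions below the derivation $D$ is applied only to honest products, never to the coefficient $a$, so the $a$-contribution will always appear through monomials that are symmetric in the relevant variables.

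First I would show that $(A,\ast_a)$ is Lie-admissible by exhibiting the stronger fact that $\ast_a$ is a pre-Lie algebra; this has the bonus of supplying the pre-Lie structure needed for the $a=0$ statement. Applying the Leibniz rule to $D(y\cdot D(z)+a\cdot y\cdot z)$, a direct computation of the associator gives $(x\ast_a y)\ast_a z-x\ast_a(y\ast_a z)=-x\cdot y\cdot D^2(z)+(\text{terms carrying the symmetric factor }x\cdot y)$, so the whole associator is symmetric under $x\leftrightarrow y$; hence $\ast_a$ is pre-Lie, and in particular Lie-admissible, whether $a\in\K$ or $a\in A$. Alternatively one may verify the Jacobi identity for $[x,y]=x\cdot D(y)-y\cdot D(x)$ directly, using $D([x,y])=x\cdot D^2(y)-y\cdot D^2(x)$; the cyclic sum then splits into terms of the types $u\cdot D(v)\cdot D(w)$ and $u\cdot v\cdot D^2(w)$, which cancel in pairs by commutativity of $\cdot$.

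Next I would check \eqref{eq:pseudo-pre-HM1}. Writing $F(x,y,z):=x\ast_a(y\cdot z)-(x\ast_a y)\cdot z-y\cdot(x\ast_a z)$ and expanding with $D(y\cdot z)=D(y)\cdot z+y\cdot D(z)$, the two derivative terms $x\cdot D(y)\cdot z$ and $x\cdot y\cdot D(z)$ cancel and one finds $F(x,y,z)=-a\cdot x\cdot y\cdot z$, which is totally symmetric; in particular $F(x,y,z)=F(y,x,z)$, i.e. \eqref{eq:pseudo-pre-HM1} holds. Thus $(A,\cdot,\ast_a)$ is $F$-manifold-admissible, and Theorem~\ref{pro:pseudo-pre-HMA-HMA} delivers the $F$-manifold algebra $(A,\cdot,[-,-])$ with the stated bracket. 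Setting $a=0$ gives $F\equiv 0$, which is exactly the PreLie-Com compatibility, so together with the pre-Lie property of $\ast_0$ established above, $(A,\cdot,\ast_0)$ is a PreLie-Com algebra. I expect no genuine difficulty beyond bookkeeping; the only point to watch is the case $a\in A$ with $D(a)\neq 0$, where one must confirm that no $D(a)$-term spoils the symmetry. This cannot happen, since $D$ never differentiates the coefficient $a$ in $F$, and any $D(a)$-term in the associator again appears with the symmetric factor $x\cdot y$.
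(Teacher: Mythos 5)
Your proposal is correct, and its core computation coincides with the paper's: both verify condition \eqref{eq:pseudo-pre-HM1} by expanding $x\ast_a(y\cdot z)-(x\ast_a y)\cdot z-y\cdot(x\ast_a z)$ via the Leibniz rule and finding it equals $-a\cdot x\cdot y\cdot z$, which is symmetric in $x,y$, and both then invoke Theorem~\ref{pro:pseudo-pre-HMA-HMA} to get the $F$-manifold algebra. The one genuine difference is how Lie-admissibility of $\ast_a$ is established: the paper simply cites the literature (Gel'fand for $a=0$, Filippov for $a\in\K$, Xu for $a\in A$) for the fact that $(A,\ast_a)$ is pre-Lie, whereas you prove it from scratch by computing the associator $(x\ast_a y)\ast_a z-x\ast_a(y\ast_a z)=-x\cdot y\cdot D^2(z)-a\cdot x\cdot y\cdot D(z)-x\cdot y\cdot z\cdot D(a)$ and observing that every term carries the symmetric factor $x\cdot y$ (with the $D(a)$-term absent when $a\in\K$). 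Your version is self-contained and makes transparent why the case $a\in A$ with $D(a)\neq 0$ causes no trouble, at the cost of a slightly longer computation; the paper's version is shorter but leans on external results. Both routes correctly yield the pre-Lie property needed for the $a=0$ PreLie-Com claim, so there is no gap.
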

\begin{proof}
  It was shown that $(A,\ast_a)$ is a pre-Lie algebra for $a=0$ by S.I. Gel'fand \cite{GD}, for $a\in \K$ by Filippov \cite{Fil} and for $a\in A$ by Xu \cite{Xu}. Thus $(A,\ast_a)$ is a Lie-admissible algebra. Furthermore, by the fact that $(A,\cdot)$ is a commutative associative algebra and  $D$ is a derivation on it, we have
  \begin{eqnarray*}
   && x\ast_a(y\cdot z)-(x\ast_a y)\cdot z-y\cdot(x\ast_a z)\\
    &=&x\cdot D (y\cdot z)+x\cdot(a\cdot y\cdot z)-(x\cdot D (y)+a\cdot x\cdot y)\cdot z-y\cdot (x\cdot D (z)+a\cdot x\cdot z)\\
    &=&-a\cdot x\cdot y\cdot z.
  \end{eqnarray*}
  Similarly, we have
  $$y\ast_a(x\cdot z)-(y\ast_a x)\cdot z-x\cdot(y\ast_a z)=-a\cdot x\cdot y\cdot z.$$
  Thus $$x\ast_a(y\cdot z)-(x\ast_a y)\cdot z-y\cdot(x\ast_a z)=y\ast_a(x\cdot z)-(y\ast_a x)\cdot z-x\cdot(y\ast_a z),$$
  which implies that $(A,\cdot,\ast_a)$ is an $F$-manifold-admissible algebra for any fixed $a\in \K$ or $a\in A$. It is obvious that $(A,\cdot,\ast_0)$ is a PreLie-Com algebra. By Theorem \ref{pro:pseudo-pre-HMA-HMA}, $(A,\cdot,[-,-])$ is an $F$-manifold algebra.
\end{proof}

\begin{ex}
{\rm  Let $A$ be the algebra of polynomials in $n$ variables. Set $D=\sum_{i=1}^nf_i\partial_{x_i}\in A\huaD_n$ for $f_i\in A,\partial_{x_i}\in\huaD_n$.
 Fixed a polynomial $\kappa\in A$, define $\cdot:A\times A\longrightarrow A$ and  $\ast_\kappa:A\times A\longrightarrow A$ by
  \begin{eqnarray*}
  g\cdot h&=& gh,\\
g\ast_\kappa h&=&gD(h)+\kappa g h=\sum_{i=1}^n gf_i\partial_{x_i}(h)+\kappa g h,\quad\forall~g,h\in A.
  \end{eqnarray*}
  Then by Proposition \ref{ex:derivation-HMA}, $(A,\cdot,\ast_\kappa)$ is an $F$-manifold-admissible algebra. Furthermore, $(A,\cdot,[-,-])$ is an $F$-manifold algebra, where the bracket is given by
  $$ [g,h]=\sum_{i=1}^n f_i\big(g\partial_{x_i}(h)-h\partial_{x_i}(g)\big),\quad\forall~g,h\in A.$$}
\end{ex}

\begin{ex}{\rm
  Let $A$ be a $2$-dimensional vector space  with basis $\{e_1,e_2\}$. Then $A$ with the non-zero multiplication
  \begin{eqnarray*}
  e_1\cdot e_1&=&e_1,\quad e_1\cdot e_2=e_2\cdot e_1=e_2
  \end{eqnarray*}
  is a commutative associative algebra. It is straightforward to check that all derivations on $(A,\cdot)$ are determined by
  $$D(e_2)=a e_2,\quad\forall~a\in \K.$$
  Thus by Proposition \ref{ex:derivation-HMA}, $(A,\cdot,[-,-])$ with the bracket
  $$[e_1,e_2]=a e_2$$
  is an $F$-manifold algebra.

  }
\end{ex}

\begin{ex}\label{ex:3-dimensional F-algebra}{\rm
  Let $A$ be a $3$-dimensional vector space  with basis $\{e_1,e_2,e_3\}$. Then $A$ with the non-zero multiplication
  \begin{eqnarray*}
 e_2\cdot e_3=e_3\cdot e_2=e_1,\quad  e_3\cdot e_3=e_2
  \end{eqnarray*}
  is a commutative associative algebra. It is straightforward to check that all derivations on $(A,\cdot)$ are determined by
 \begin{eqnarray*}
  D(e_1)=3a e_1,\quad D(e_2)=2b e_1+2a e_2,\quad
  D(e_3)=ce_1+be_2+ae_3,\quad \forall~a,b,c\in \K.
  \end{eqnarray*}
  Thus by Proposition \ref{ex:derivation-HMA}, $(A,\cdot,[-,-])$ with the bracket
  $$[e_2,e_3]=-a e_1$$
  is an $F$-manifold algebra.

  }
\end{ex}

\subsection{Pre-Lie deformation quantization of commutative pre-Lie algebras} In this subsection, we introduce the notion of   pre-Lie formal deformations of commutative associative algebras (that is,  commutative pre-Lie algebras) and show that $F$-manifold algebras are the corresponding semi-classical limits. This illustrates that $F$-manifold algebras are the same to pre-Lie algebras as Poisson algebras to associative algebras. Furthermore, we show that pre-Lie infinitesimal deformations and extensions of pre-Lie $n$-deformations to pre-Lie $(n+1)$-deformations of a commutative associative algebra $A$ are classified by the second and the third cohomology group of the pre-Lie algebra $A$ (view the commutative associative algebra $A$ as a pre-Lie algebra).

Let $(A,\cdot_A)$ be a commutative associative algebra. Recall that an {\bf associative formal deformation}  of $A$ is a sequence of bilinear maps $\mu_n:A\times A\rightarrow A$ for $n\geqslant 0$ with $\mu_0$ being the commutative associative algebra product $\cdot_A$ on $A$, such that the $\K[[h]]$-bilinear product $\cdot_\hbar$ on $A[[\hbar]]$ determined by
$$x\cdot_\hbar y=\sum_{n=0}^\infty\hbar^n\mu_n(x,y),\quad\forall~x,y\in A$$
is associative, where $A[[\hbar]]$ is the set of formal power series of $\hbar$ with coefficients in $A$. Define
$$\{x,y\}=\mu_1(x,y)-\mu_1(y,x),\quad\forall~x,y\in A.$$
It is well known that $(A,\cdot_A,\{-,-\})$ is a Poisson algebra, called the {\bf semi-classical limit} of $(A[[\hbar]],\cdot_\hbar)$.

Since an associative algebra can be regarded as a pre-Lie algebra,
one may look for formal deformations of a commutative associative
algebra into pre-Lie algebras, that is, in the aforementioned
associative formal deformation, replace the associative product
$\cdot_{\bar h}$ by the pre-Lie product, and wonder what
additional structure will appear on $A$. On the other hand, such
an approach can be also seen as  formal deformations of a
commutative pre-Lie algebra into (non-commutative) pre-Lie
algebras, which is completely parallel to the associative formal
deformations of a commutative associative algebra  into
(non-commutative) associative algebras. Surprisingly, we find
that this is the structure of the $F$-manifold algebra. Now we
give the definition of a pre-Lie formal deformation of a
commutative associative algebra.
\begin{defi}
  Let $(A,\cdot_A)$ be a commutative associative algebra. A {\bf pre-Lie formal deformation} of $A$ is a sequence of bilinear maps $\mu_k:A\times A\rightarrow A$ for $k\geqslant 0$ with $\mu_0$ being the commutative associative algebra product $\cdot_A$ on $A$, such that the $\K[[h]]$-bilinear product $\cdot_\hbar$ on $A[[\hbar]]$ determined by
$$x\cdot_\hbar y=\sum_{n=0}^\infty\hbar^n\mu_n(x,y),\quad\forall~x,y\in A$$
is a pre-Lie algebra product.
\end{defi}
Note that the rule of pre-Lie algebra product $\cdot_\hbar$ on $A[[\hbar]]$ is equivalent to
  \begin{equation}\label{eq:pre-Lie rule}
  \sum_{i+j=k}\big(\mu_i(\mu_j(x,y),z)-\mu_i(x,\mu_j(y,z))\big)=\sum_{i+j=k}\big(\mu_i(\mu_j(y,x),z)-\mu_i(y,\mu_j(x,z))\big),\quad\forall~k\geqslant 0.
  \end{equation}

\begin{thm}\label{thm:deformation quantization}
Let $(A,\cdot_A)$ be a commutative associative algebra and $(A[[\hbar]],\cdot_\hbar)$ a pre-Lie formal deformation of $A$. Define
$$[x,y]_A=\mu_1(x,y)-\mu_1(y,x),\quad\forall~x,y\in A.$$
Then $(A,\cdot_A,[-,-]_A)$ is an $F$-manifold algebra. The $F$-manifold algebra $(A,\cdot_A,[-,-]_A)$ is called the {\bf semi-classical limit} of $(A[[\hbar]],\cdot_\hbar)$. The pre-Lie algebra $(A[[\hbar]],\cdot_\hbar)$ is called a {\bf pre-Lie deformation quantization} of $(A,\cdot_A)$.
\end{thm}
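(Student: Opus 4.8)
The plan is to reduce everything to Theorem \ref{pro:pseudo-pre-HMA-HMA} by showing that the first-order term $\mu_1$ makes $(A,\cdot_A,\mu_1)$ into an $F$-manifold-admissible algebra; after that, the $F$-manifold algebra structure on $(A,\cdot_A,[-,-]_A)$ follows at once, since $[x,y]_A=\mu_1(x,y)-\mu_1(y,x)$ is exactly the commutator \eqref{eq:pseudo-bracket} of $\ast_A=\mu_1$. Thus it suffices to verify the two defining conditions of Definition \ref{defi:pseudo-pre-HMA}: that $\mu_1$ satisfies the identity \eqref{eq:pseudo-pre-HM1}, and that $(A,\mu_1)$ is Lie-admissible. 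The commutative associativity of $\cdot_A=\mu_0$ is part of the hypothesis.

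First I would extract the order-one part of the pre-Lie condition \eqref{eq:pre-Lie rule}. Taking $k=1$, the sum runs over $(i,j)\in\{(0,1),(1,0)\}$, and because $\mu_0$ is commutative the two terms $\mu_1(\mu_0(x,y),z)$ and $\mu_1(\mu_0(y,x),z)$ coincide and cancel across the two sides. What remains, after rearranging, is precisely the relation \eqref{eq:pseudo-pre-HM1} with $\ast_A=\mu_1$. This is a direct computation; the only delicate point is the cancellation forced by the commutativity of $\mu_0$, which is exactly what collapses the order-one relation down to the desired two-sided symmetry.

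Next I would establish Lie-admissibility of $\mu_1$, i.e. that $[-,-]_A$ obeys the Jacobi identity. Rather than grind through the $k=2$ instance of \eqref{eq:pre-Lie rule}, I would argue through the sub-adjacent Lie algebra: by Lemma \ref{lem:pre-Lie-Lie}, the commutator $[u,v]_\hbar=u\cdot_\hbar v-v\cdot_\hbar u$ is a Lie bracket on $A[[\hbar]]$. Since $\mu_0$ is commutative, the $\hbar^0$ coefficient of $[u,v]_\hbar$ vanishes, so for $x,y\in A$ one has $[x,y]_\hbar=\hbar[x,y]_A+O(\hbar^2)$. Substituting this into the Jacobi identity for $[-,-]_\hbar$, every nested bracket carries at least one factor of $\hbar$, so the lowest-order contribution occurs at $\hbar^2$ and equals the Jacobiator of $[-,-]_A$; as the Jacobi identity for $[-,-]_\hbar$ holds identically in $\hbar$, its $\hbar^2$-coefficient must vanish, giving the Jacobi identity for $[-,-]_A$. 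Equivalently, $(A,\mu_1)$ is Lie-admissible.

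With both conditions in hand, $(A,\cdot_A,\mu_1)$ is an $F$-manifold-admissible algebra, and Theorem \ref{pro:pseudo-pre-HMA-HMA} supplies the Hertling-Manin relation for free, completing the proof. The only genuinely subtle step is the order-one matching: one must confirm that the $k=1$ relation, after the commutativity cancellation, reproduces \eqref{eq:pseudo-pre-HM1} on the nose rather than some sign variant. The Jacobi identity, by contrast, drops out cleanly from the semi-classical expansion of the sub-adjacent Lie bracket and never requires touching the higher deformation terms $\mu_n$ with $n\geq 2$.
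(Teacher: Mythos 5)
Your proposal is correct and follows essentially the same route as the paper's own proof: the Jacobi identity for $[-,-]_A$ is extracted from the $\hbar^2$-coefficient of the Jacobi identity of the commutator bracket $[-,-]_\hbar$ on $A[[\hbar]]$, and the identity \eqref{eq:pseudo-pre-HM1} for $\ast_A=\mu_1$ is the $k=1$ case of \eqref{eq:pre-Lie rule} after the commutativity cancellation, so that Theorem \ref{pro:pseudo-pre-HMA-HMA} finishes the argument. No gaps.
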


\begin{proof}
  Define the bracket $[-,-]_\hbar$ on $A[[\hbar]]$ by
$$[x,y]_\hbar=x\cdot_\hbar y-y\cdot_\hbar x=\hbar[x,y]_A+\hbar^2(\mu_2(x,y)-\mu_2(y,x))+\cdots, \quad \forall~x,y\in A.$$
By the fact that $(A[[\hbar]],\cdot_\hbar)$ is a pre-Lie algebra, $(A[[\hbar]],[-,-]_\hbar)$ is a Lie algebra. The $\hbar^2$-terms of the Jacobi identity for $[-,-]_\hbar$ gives the Jacobi identity for $[-,-]_A$. Thus $(A,[-,-]_A)$ is a Lie algebra.

For $k=1$ in \eqref{eq:pre-Lie rule}, by the commutativity of $\mu_0$, we have
\begin{eqnarray*}
  \mu_0(\mu_1(x,y),z)-\mu_0(x,\mu_1(y,z))-\mu_1(x,\mu_0(y,z))=\mu_0(\mu_1(y,x),z)-\mu_0(y,\mu_1(x,z))-\mu_1(y,\mu_0(x,z)).
\end{eqnarray*}
This is just the equality \eqref{eq:pseudo-pre-HM1} with $x\cdot_A y=\mu_0(x,y)$ and $x\ast_A y=\mu_1(x,y)$ for $x,y\in A$.
Thus $(A,\cdot_A,\ast_A)$ is an $F$-manifold-admissible algebra. By Theorem \ref{pro:pseudo-pre-HMA-HMA}, $(A,\cdot_A,[-,-]_A)$ is an $F$-manifold algebra.
\end{proof}

In the sequel, we study pre-Lie $n$-deformations and pre-Lie infinitesimal deformations of   commutative associative algebras.
\begin{defi}
  Let $(A,\cdot_A)$ be a commutative associative algebra. A {\bf pre-Lie  $n$-deformation} of $A$ is a sequence of bilinear maps $\mu_i:A\times A\rightarrow A$ for $0\leq i\leq n$ with $\mu_0$ being the commutative associative algebra product $\cdot_A$ on $A$, such that the $\K[[h]]/(\hbar^{n+1})$-bilinear product $\cdot_\hbar$ on $A[[\hbar]]/(\hbar^{n+1})$ determined by
$$x\cdot_\hbar y=\sum_{k=0}^n\hbar^n\mu_k(x,y),\quad\forall~x,y\in A$$
is a pre-Lie algebra product.
\end{defi}

We call a pre-Lie $1$-deformation of a commutative associative algebra $(A,\cdot_A)$ a {\bf pre-Lie infinitesimal deformation} and denote it by $(A,\mu_1)$.

By direct calculations, $(A,\mu_1)$ is a pre-Lie infinitesimal deformation of a commutative associative algebra $(A,\cdot_A)$  if and only if for all $x,y,z\in A$
 \begin{eqnarray}
 \label{2-closed} \mu_1(x,y)\cdot_A z-x\cdot_A \mu_1(y,z)-\mu_1(x,y\cdot_A z)&=&\mu_1(y,x)\cdot_A z-y\cdot_A \mu_1(x,z)-\mu_1(y,x\cdot_A z).
\end{eqnarray}
Equation $(\ref{2-closed})$ means that $\mu_1$ is a $2$-cocycle for the pre-Lie algebra $(A,\cdot_A)$, i.e. $\dM^{\reg}\mu_1=0$.

Two pre-Lie infinitesimal deformations  $A_\hbar=(A,\mu_1)$ and $A'_\hbar=(A,\mu'_1)$  of a
commutative associative algebra $(A,\cdot_A)$ are said to be {\bf
equivalent} if there exists a family of pre-Lie algebra
homomorphisms ${\Id}+\hbar\varphi:A_\hbar\longrightarrow A'_\hbar$ modulo $\hbar^2$. A pre-Lie infinitesimal deformation
is said to be {\bf trivial} if there exists a family of pre-Lie
algebra homomorphisms ${\Id}+\hbar\varphi:A_\hbar\longrightarrow (A,\cdot_A)$ modulo $\hbar^2$.

By direct calculations, $A_\hbar$ and $A'_\hbar$  are
equivalent pre-Lie infinitesimal deformations if and only if
\begin{eqnarray}
\mu_1(x,y)-\mu_1'(x,y)&=&x\cdot_A \varphi(y)+\varphi(x)\cdot_A y-\varphi(x\cdot_A y).\label{2-exact}
\end{eqnarray}
Equation $(\ref{2-exact})$ means that $\mu_1-\mu_1'=\dM^{\reg}\varphi$. Thus we have

\begin{thm}
 There is a one-to-one correspondence between the space of equivalence classes of pre-Lie infinitesimal deformations of $A$ and the second cohomology group $H_{\rm reg}^2(A,A)$.
\end{thm}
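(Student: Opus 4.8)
The plan is to exhibit an explicit bijection between equivalence classes of pre-Lie infinitesimal deformations and $H_{\rm reg}^2(A,A)$, built directly on the two characterizations already obtained before the statement: that a pre-Lie infinitesimal deformation $(A,\mu_1)$ is precisely a $2$-cocycle (equation \eqref{2-closed}, i.e. $\dM^{\reg}\mu_1=0$), and that equivalence of deformations is precisely agreement up to a $2$-coboundary (equation \eqref{2-exact}, i.e. $\mu_1-\mu_1'=\dM^{\reg}\varphi$).

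First I would define a map $\Phi$ from the set of pre-Lie infinitesimal deformations of $(A,\cdot_A)$ to the space $Z_{\rm reg}^2(A,A)$ of $2$-cocycles by $\Phi(A,\mu_1)=\mu_1$. This is well defined because \eqref{2-closed} says exactly that $\mu_1$ is a $2$-cocycle. Conversely, every $2$-cocycle $\mu_1$ arises in this way, since the cocycle condition $\dM^{\reg}\mu_1=0$ is, by \eqref{2-closed}, identical to the requirement that $(A,\mu_1)$ be a pre-Lie infinitesimal deformation. As distinct deformations have distinct $\mu_1$, the map $\Phi$ is a bijection onto $Z_{\rm reg}^2(A,A)$.

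Next I would pass to quotients. On the source side one quotients by the equivalence relation on infinitesimal deformations; on the target side one quotients $Z_{\rm reg}^2(A,A)$ by the space $B_{\rm reg}^2(A,A)$ of coboundaries to form $H_{\rm reg}^2(A,A)$. Equation \eqref{2-exact} shows that $(A,\mu_1)$ and $(A,\mu_1')$ are equivalent if and only if $\mu_1-\mu_1'=\dM^{\reg}\varphi$ for some $\varphi$, that is, if and only if $\Phi(A,\mu_1)$ and $\Phi(A,\mu_1')$ differ by a coboundary. Hence $\Phi$ descends to a well-defined map $\overline{\Phi}$ on equivalence classes, and this descended map is injective precisely because the equivalence relation on deformations matches the coboundary relation on cocycles. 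Surjectivity of $\overline{\Phi}$ is immediate from the surjectivity of $\Phi$ onto $Z_{\rm reg}^2(A,A)$: any class in $H_{\rm reg}^2(A,A)$ has a cocycle representative $\mu_1$, and $(A,\mu_1)$ maps to it. This yields the asserted one-to-one correspondence.

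The computations carrying the real content, namely that the pre-Lie rule at order $\hbar$ is equivalent to the cocycle identity \eqref{2-closed} and that an equivalence $\Id+\hbar\varphi$ of deformations amounts to the coboundary identity \eqref{2-exact}, have already been performed. I therefore expect no serious obstacle: the remaining argument is the routine formal check that a bijection of cocycles intertwining the two equivalence relations descends to a bijection of the corresponding quotients. The only point deserving care is verifying that the equivalence relation on deformations is the full coboundary relation, in both directions of \eqref{2-exact}, so that $\overline{\Phi}$ is simultaneously well defined and injective rather than merely well defined.
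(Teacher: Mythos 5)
Your proposal is correct and follows essentially the same route as the paper: the paper derives the theorem directly from the observations that equation \eqref{2-closed} identifies pre-Lie infinitesimal deformations with $2$-cocycles and equation \eqref{2-exact} identifies equivalence with differing by a coboundary. Your write-up merely makes explicit the routine descent-to-quotients step that the paper leaves implicit.
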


It is routine to check that

\begin{pro}
  Let $(A,\cdot_A)$ be a commutative associative algebra such that $H_{\rm reg}^2(A,A)=0$. Then all pre-Lie infinitesimal deformations of $A$ are trivial.
\end{pro}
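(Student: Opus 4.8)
The plan is to read off the result directly from the cohomological characterization that immediately precedes the statement, so that the proof reduces to unwinding what ``trivial'' means at the cochain level. The key observation is that a pre-Lie infinitesimal deformation $(A,\mu_1)$ is trivial precisely when it is equivalent to the undeformed algebra $(A,\cdot_A)$; setting $\mu_1'=0$ in the equivalence criterion \eqref{2-exact}, this says exactly that $\mu_1=\dM^{\reg}\varphi$ for some linear map $\varphi\colon A\to A$, i.e.\ that the cocycle $\mu_1$ is a coboundary. So triviality of $(A,\mu_1)$ is equivalent to $\mu_1$ being exact, and the vanishing of the second cohomology is exactly what forces every cocycle to be exact.

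Concretely I would proceed in three steps. First, start with an arbitrary pre-Lie infinitesimal deformation $(A,\mu_1)$; by the characterization \eqref{2-closed} the map $\mu_1$ is a $2$-cocycle for the regular representation, so $\dM^{\reg}\mu_1=0$. Second, invoke the hypothesis $H_{\rm reg}^2(A,A)=0$: since the second cohomology group is the quotient of $2$-cocycles by $2$-coboundaries, its vanishing forces every cocycle to be a coboundary, so there exists $\varphi\colon A\to A$ with $\mu_1=\dM^{\reg}\varphi$. Third, rewrite this equality in the form \eqref{2-exact} with $\mu_1'=0$, which exhibits the family $\Id+\hbar\varphi$ as a pre-Lie algebra homomorphism from $A_\hbar$ onto $(A,\cdot_A)$ modulo $\hbar^2$; by definition this means that $(A,\mu_1)$ is trivial. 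Since $(A,\mu_1)$ was arbitrary, all pre-Lie infinitesimal deformations of $A$ are trivial.

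There is essentially no analytic obstacle here: the statement is a formal corollary of the one-to-one correspondence established just above between equivalence classes of infinitesimal deformations and $H_{\rm reg}^2(A,A)$, under which the trivial class corresponds to $0$. The only point that needs care is the bookkeeping with the definitions, namely confirming that the ``trivial'' deformation is the one represented by $\mu_1'=0$ and hence by the zero cohomology class, so that the hypothesis $H_{\rm reg}^2(A,A)=0$ collapses the single equivalence class onto the trivial one. No further computation with the coboundary operator \eqref{eq:pre-Lie cohomology} is required.
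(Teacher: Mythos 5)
Your proof is correct and is precisely the ``routine check'' the paper alludes to: the paper offers no written argument beyond noting the result follows from the preceding correspondence, and your unwinding --- $\mu_1$ is a cocycle by \eqref{2-closed}, vanishing of $H_{\rm reg}^2(A,A)$ makes it a coboundary $\dM^{\reg}\varphi$, and \eqref{2-exact} with $\mu_1'=0$ identifies this with triviality via $\Id+\hbar\varphi$ --- is exactly that intended argument.
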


\begin{defi}
Let $\{\mu_1, \cdots,\mu_{n}\}$ be a pre-Lie $n$-deformation of a commutative associative algebra $(A,\cdot_A)$. A pre-Lie $(n+1)$-deformation of a commutative associative algebra $(A,\cdot_A)$ given by $\{\mu_1,\cdots,\mu_n,\mu_{n+1}\}$ is called an {\bf extension} of the pre-Lie $n$-deformation given by $\{\mu_1, \cdots,\mu_{n}\}$.
\end{defi}

\begin{thm}
  For any pre-Lie $n$-deformation of a commutative associative algebra $(A,\cdot_A)$, the $\Theta_n\in\Hom(\otimes^3\g,\g)$ defined by
\begin{equation}\label{eq:3-cocycle}
\Theta_n(x,y,z)=\sum_{i+j=n+1,i,j\geq 1}\big(\mu_i(\mu_j(x,y),z)-\mu_i(x,\mu_j(y,z))-\mu_i(\mu_j(y,x),z)+\mu_i(y,\mu_j(x,z))\big).
\end{equation}
is a cocycle, i.e. $\dM^{\reg}\Theta_n=0$.

 Moreover, the pre-Lie $n$-deformation $\{\mu_1, \cdots,\mu_{n}\}$ extends into some pre-Lie $(n + 1)$-deformation if and only if $[\Theta_n]=0$ in $H_{\rm reg}^3(A,A)$.
\end{thm}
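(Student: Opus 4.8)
The plan is to isolate the order-$(n+1)$ obstruction to the pre-Lie rule \eqref{eq:pre-Lie rule}. Write $S_k(x,y,z)$ for the difference of the left- and right-hand sides of \eqref{eq:pre-Lie rule} at a fixed $k$, so that $\{\mu_1,\dots,\mu_n\}$ being a pre-Lie $n$-deformation means precisely $S_k=0$ for $0\le k\le n$. I would first prove the bookkeeping identity
\begin{equation*}
\begin{aligned}
S_{n+1}(x,y,z)&=\sum_{i+j=n+1}\big(\mu_i(\mu_j(x,y),z)-\mu_i(x,\mu_j(y,z))-\mu_i(\mu_j(y,x),z)+\mu_i(y,\mu_j(x,z))\big)\\
&=\Theta_n(x,y,z)-\dM^{\reg}\mu_{n+1}(x,y,z),
\end{aligned}
\end{equation*}
by splitting the sum defining $S_{n+1}$ into its part with $i,j\geq1$, which is $\Theta_n$ by \eqref{eq:3-cocycle}, and its boundary part with $i=0$ or $j=0$. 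Using only the commutativity of $\mu_0=\cdot_A$ (so that $\mu_0(x,y)=\mu_0(y,x)$ and $\mu_0$ acts as $L=R$ in the regular representation) together with the vanishing of the commutator of $\mu_0$, a direct expansion of \eqref{eq:pre-Lie cohomology} applied to the $2$-cochain $\mu_{n+1}$ identifies the boundary part with $-\dM^{\reg}\mu_{n+1}$; this is the same manipulation that turns \eqref{2-closed} into $\dM^{\reg}\mu_1=0$.

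Granting this identity, the extension assertion is immediate: extending the given $n$-deformation by a bilinear map $\mu_{n+1}$ amounts exactly to the order-$(n+1)$ instance $S_{n+1}=0$ of \eqref{eq:pre-Lie rule}, i.e. to $\dM^{\reg}\mu_{n+1}=\Theta_n$. Hence an extension exists if and only if $\Theta_n$ lies in the image of $\dM^{\reg}$, which, once $\Theta_n$ is known to be a cocycle, is equivalent to $[\Theta_n]=0$ in $H_{\rm reg}^3(A,A)$.

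It remains to prove the cocycle property $\dM^{\reg}\Theta_n=0$, and this is the real content and the main obstacle. A preliminary point is that $\Theta_n$ is an honest $3$-cochain: exchanging $x\leftrightarrow y$ in \eqref{eq:3-cocycle} negates each summand, so $\Theta_n$ is antisymmetric in its first two arguments and lies in $C^3(\g,\g)=\Hom(\wedge^2\g\otimes\g,\g)$, on which $\dM^{\reg}$ acts. The conceptually cleanest route is to run the computation inside the graded Lie algebra $(C^\bullet(\g,\g),[\cdot,\cdot])$ controlling pre-Lie deformation theory (the one underlying the complex \eqref{eq:pre-Lie cohomology}, cf. \cite{cohomology of pre-Lie}): there a product $\pi\in C^2$ is pre-Lie exactly when $[\pi,\pi]=0$, the regular coboundary is $\dM^{\reg}=[\mu_0,-]$, and $\Theta_n$ equals $\tfrac12\sum_{i+j=n+1,\,i,j\geq1}[\mu_i,\mu_j]$ up to sign. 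Then $\dM^{\reg}\Theta_n=[\mu_0,\Theta_n]$, and expanding by the graded Jacobi identity and feeding in the lower-order relations $\sum_{i+j=k}[\mu_i,\mu_j]=0$ for $k\le n$ (which are exactly $S_k=0$) makes all terms cancel in pairs. If one prefers to stay elementary, the same identity can be obtained by expanding $\dM^{\reg}\Theta_n$ term by term from \eqref{eq:pre-Lie cohomology}, collecting the resulting triple compositions $\mu_0(\mu_i(\mu_j(\cdot)))$ according to their total order, and applying $S_k=0$ for each $k\le n$; the only real difficulty there is the volume of bookkeeping, which the graded Jacobi identity organizes automatically.
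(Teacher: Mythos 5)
Your proposal is correct, and structurally it follows the same route as the paper: antisymmetry of $\Theta_n$ in its first two arguments makes it a legitimate $3$-cochain, and the extension criterion comes from identifying the order-$(n+1)$ deformation equation with $\dM^{\reg}\mu_{n+1}=\Theta_n$. Your bookkeeping identity $S_{n+1}=\Theta_n-\dM^{\reg}\mu_{n+1}$ is exactly right: in the $j=0$ part the pair $\mu_{n+1}(\mu_0(x,y),z)-\mu_{n+1}(\mu_0(y,x),z)$ cancels by commutativity, and the Lie-bracket term of \eqref{eq:pre-Lie cohomology} vanishes because the commutator of $\mu_0$ is zero, so the boundary part is precisely $-\dM^{\reg}\mu_{n+1}$. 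This packages both directions of the paper's argument into one line and is in fact cleaner than the corresponding display in the paper's proof, which contains typos (a duplicated pair $\mu_{n+1}(y,x)\cdot_A z-\mu_{n+1}(x,y)\cdot_A z$ where the terms $\mu_{n+1}(x,\mu_0(y,z))$ and $\mu_{n+1}(y,\mu_0(x,z))$ should appear) before being summarized as $\dM^{\reg}\mu_{n+1}=\Theta_n$. The genuine difference is in the cocycle property: the paper dismisses it as ``straightforward to check,'' offering no mechanism, whereas you supply one, namely the graded Lie algebra controlling pre-Lie deformations (precisely the pre-Lie deformation theory of \cite{DSV}, which the paper cites for other purposes), in which $\dM^{\reg}=[\mu_0,-]$, the $n$-deformation conditions read $\sum_{i+j=k}[\mu_i,\mu_j]=0$ for $k\le n$, and the graded Jacobi identity forces $[\mu_0,\Theta_n]=0$. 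That is the standard Gerstenhaber-style obstruction argument; it buys conceptual clarity and generality (the same argument works for algebras over any Koszul operad), at the price of having to pin down conventions: you must still verify that the convolution-type bracket on $\bigoplus_n\Hom(\wedge^{n-1}A\otimes A,A)$ twists to Dzhumadil'daev's differential and that $\Theta_n=\pm\tfrac12\sum_{i+j=n+1,\,i,j\ge1}[\mu_i,\mu_j]$ in that normalization. These are sign checks rather than gaps, and your elementary fallback (expanding $\dM^{\reg}\Theta_n$ and cancelling via $S_k=0$ for $k\le n$) is presumably the unwritten computation the paper has in mind, so the proposal stands.
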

\begin{proof}
It is obvious that
\begin{equation*}
\Theta_n(x,y,z)=-\Theta_n(y,x,z),\quad \forall~x,y,z\in A.
\end{equation*}
Thus $\Theta_n$ is an element of $C^3(A,A)$. It is straightforward to check that the cochain $\Theta_n\in C^3(A,A)$ is closed.

Assume that the pre-Lie $(n+1)$-deformation of a commutative associative algebra $(A,\cdot_A)$   given by $\{\mu_1,\cdots,\mu_n,\mu_{n+1}\}$ is an extension of the pre-Lie $n$-deformation   given by $\{\mu_1,\cdots,\mu_n\}$, then we have
\begin{eqnarray*}
  \label{eq:n+1 term}&&x\cdot_A \mu_{n+1}(y,z)-y\cdot_A \mu_{n+1}(x,z)+ \mu_{n+1}(y,x)\cdot_A z-\mu_{n+1}(x,y)\cdot_A z+\mu_{n+1}(y,x)\cdot_A z \\
   \nonumber&&\quad-\mu_{n+1}(x,y)\cdot_A z=\sum_{i+j=n+1,i,j\geq 1}\big(\mu_i(\mu_j(x,y),z)-\mu_i(x,\mu_j(y,z))-\mu_i(\mu_j(y,x),z)+\mu_i(y,\mu_j(x,z))\big).
\end{eqnarray*}
It is obvious that the right-hand side of the above equality is just $\Theta_n(x,y,z)$. We can rewrite the above equality as
$$\dM^{\reg}\mu_{n+1}(x,y,z)=\Theta_n(x,y,z).$$
We conclude that, if a pre-Lie $n$-deformation of a commutative associative algebra $(A,\cdot_A)$ extends to a pre-Lie $(n + 1)$-deformation, then $\Theta_n$ is coboundary.

Conversely, if $\Theta_n$ is coboundary, then there exists an element $\psi\in C^2(A,A)$ such that
$$\dM^{\reg}\psi(x,y,z)=\Theta_n(x,y,z).$$
It is not hard to check that $\{\mu_1,\cdots,\mu_n,\mu_{n+1}\}$ with $\mu_{n+1}=\psi$ generates a  pre-Lie $(n+1)$-deformation of $(A,\cdot_A)$ and thus this pre-Lie $(n+1)$-deformation is an extension of the pre-Lie $n$-deformation given by $\{\mu_1, \cdots,\mu_{n}\}$.
\end{proof}

\section{Pre-$F$-manifold algebras, Rota-Baxter operators and  $\huaO$-operators on $F$-manifold algebras}\label{sec:pre-F-algebras}
In this section, first we study representations of an $F$-manifold algebra. Then we introduce the notions of pre-$F$-manifold algebras and Rota-Baxter operators (more generally $\huaO$-operators) on an $F$-manifold algebra. We show that on one hand, an $\huaO$-operator on an $F$-manifold algebra gives a pre-$F$-manifold algebra, and on the other hand, a pre-$F$-manifold algebra naturally gives an $\huaO$-operator on the sub-adjacent $F$-manifold algebra. More examples on pre-$F$-manifold algebras and $F$-manifold algebras are given.

\subsection{Representations of $F$-manifold algebras}
In this subsection, we introduce the notion of representations of $F$-manifold algebras.

Let $(A,\cdot_A)$ be a commutative associative algebra. Recall that a {\bf representation} of $A$ on a vector space $V$ is a linear map $\mu:A\longrightarrow\gl(V)$ such that $\mu(x\cdot_A y)=\mu(x)\circ\mu(y)$ for any $x,y\in A$. We will denote a representation of $A$ by $(V;\mu)$.
% In fact, $(V;\mu)$ is a representation of a commutative associative algebra
%$A$ if and only if the direct sum $A\oplus V$ of vector spaces is a commutative associative algebra (the semi-direct product) by
%defining the multiplication on $A\oplus V$ by
%$$
 % (x_1+v_1)\cdot_{\mu}(x_2+v_2)=x_1\cdot_A x_2+\mu(x_1)v_2+\mu(x_2)v_1,\quad \forall~ x_1,x_2\in A,v_1,v_2\in V.
%$$
%We denote it by $A\ltimes_{\mu} V$ or simply by $A\ltimes V$.
 Let $(V;\mu)$ be a representation of a commutative associative algebra $(A,\cdot_A)$.
   Define $\mu^*:A\longrightarrow \gl(V^*)$ by
$$
 \langle \mu^*(x)\alpha,v\rangle=-\langle \alpha,\mu(x)v\rangle,\quad \forall ~ x\in A,\alpha\in V^*,v\in V.
$$
  Then $(V^*;-\mu^*)$ is a representation of $(A,\cdot_A)$.

\begin{ex}{\rm
  Let $(A,\cdot_A)$ be a commutative associative algebra. Let $\huaL_x$ denote the multiplication operator, that is,
 $
  \huaL_xy=x\cdot_A y,$ for all $~x,y\in A.
 $
   Then $(A;\huaL)$ is a representation of $(A,\cdot_A)$, called the {\bf regular representation}. Furthermore,  $(A^*;-\huaL^*)$ is also a representation of $(A,\cdot_A)$.
  }
\end{ex}

 Similarly, let $(\frak g,[-,-]_{\frak g})$ be a Lie algebra  and $V$ a vector space. Let $\rho:\frak g\rightarrow
\frak g\frak l(V)$ be a linear map. The pair $(V;\rho)$ is called a {\bf representation} of $\frak g$ if for all
$x,y\in \frak g$, we have
$\rho([x,y]_{\frak g})=[\rho(x),\rho(y)].$
%In fact, $(V;\rho)$ is a representation of a Lie algebra
%$\frak g$ if and only if the direct sum $\frak g\oplus V$ of vector spaces is  a Lie algebra (the semi-direct product) by
%defining the Lie bracket on $\frak g\oplus V$ by
%$$
%  [x_1+v_1,x_2+v_2]_\rho=[x_1,x_2]_{\frak g}+\rho(x_1)(v_2)-\rho(x_2)(v_1),\quad \forall~x_1,x_2\in \frak g,v_1,v_2\in V.
%$$
 % We denote it by $\frak g\ltimes_\rho V$ or simply by $\frak g\ltimes V$.
Let $(V;\rho)$ be a representation of a Lie algebra $(\frak g,[-,-]_{\frak g})$.
   Define $\rho^*:\frak g\longrightarrow \gl(V^*)$  by
$$
 \langle \rho^*(x)(\alpha),v\rangle=-\langle \alpha,\rho(x) (v)\rangle,\quad \forall~x\in \frak g,\alpha\in V^*,v\in V.
$$
  Then $(V^*;\rho^*)$ is a representation of $(\g, [-,-]_\g)$.
  \begin{ex}
 Define $\ad:\g\lon\gl(\g)$ by $\ad_xy=[x,y]_\g$ for all $x,y\in \g$. Then   $(\frak g;\ad)$ is a representation of $(\g, [-,-]_\g)$, called the {\bf adjoint representation}. Furthermore,  $(\g^*;\ad^*)$ is also a representation of $(\g, [-,-]_\g)$.
  \end{ex}

\begin{defi}Let $(A,\cdot_A,[-,-]_A)$ be an $F$-manifold algebra. A {\bf representation} of $A$ is a triple $(V;\rho,\mu)$ such that $(V;\rho)$ is a  representation of the Lie algebra $(A,[-,-]_A)$ and $(V;\mu)$ is a representation of the commutative associative algebra $(A,\cdot_A)$ satisfying
   \begin{eqnarray}
     \label{eq:rep 1}R_{\rho,\mu}(x\cdot_A y,z)=\mu(x)\circ R_{\rho,\mu}(y,z)+\mu(y)\circ R_{\rho,\mu}(x,z),\\
     \label{eq:rep 2} \mu(P_x(y,z))=S_{\rho,\mu}(y,z)\circ\mu(x)-\mu(x)\circ S_{\rho,\mu}(y,z),
   \end{eqnarray}
   where $R_{\rho,\mu},S_{\rho,\mu}: A\otimes A\rightarrow \gl(V)$ are defined by
   \begin{eqnarray}
  \label{eq:repH 1}R_{\rho,\mu}(x,y)&=&\rho(x)\circ\mu(y)-\mu(y)\circ \rho(x)-\mu([x,y]_A),\\
  \label{eq:repH 2}S_{\rho,\mu}(x,y)&=&\mu(x)\circ\rho(y)+\mu(y)\circ \rho(x)-\rho(x\cdot_A y)
   \end{eqnarray}
  for all $x,y,z\in A$.
  \end{defi}

It is straightforward to obtain the following conclusion.
\begin{pro}\label{pro:semi-direct}
 Let $(A,\cdot_A,[-,-]_A)$ be an $F$-manifold algebra and $(V;\rho,\mu)$ a representation. Then $(A\oplus V,\cdot_{\mu},[-,-]_\rho)$ is an $F$-manifold algebra, where $(A\oplus V,\cdot_{\mu})$ is the semi-direct product commutative associative algebra $A\ltimes_{\mu} V$, i.e.
 $$
  (x_1+v_1)\cdot_{\mu}(x_2+v_2)=x_1\cdot_A x_2+\mu(x_1)v_2+\mu(x_2)v_1,\quad \forall~ x_1,x_2\in A,~v_1,v_2\in V
$$ and $(A\oplus V,[-,-]_\rho)$ is the semi-direct product Lie algebra $A\ltimes_{\rho} V$, i.e. $$
  [x_1+v_1,x_2+v_2]_\rho=[x_1,x_2]_{\frak g}+\rho(x_1)(v_2)-\rho(x_2)(v_1),\quad \forall~x_1,x_2\in A,~v_1,v_2\in V.
$$
\end{pro}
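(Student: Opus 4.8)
The plan is to verify the three axioms of an $F$-manifold algebra for the pair $(\cdot_\mu,[-,-]_\rho)$ on $A\oplus V$. That $(A\oplus V,\cdot_\mu)$ is commutative associative and $(A\oplus V,[-,-]_\rho)$ is a Lie algebra are the usual semi-direct product constructions, valid precisely because $(V;\mu)$ is a representation of the commutative associative algebra $(A,\cdot_A)$ and $(V;\rho)$ is a representation of the Lie algebra $(A,[-,-]_A)$; I would take these for granted and concentrate on the Hertling-Manin relation \eqref{eq:HM relation} for the combined structure.

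First I would compute the operator $P$ of the semi-direct product on three generic elements $X=x+a$, $Y=y+b$, $Z=z+c$, with $x,y,z\in A$ and $a,b,c\in V$. Expanding $[X,Y\cdot_\mu Z]_\rho-[X,Y]_\rho\cdot_\mu Z-Y\cdot_\mu[X,Z]_\rho$ from the definitions of $\cdot_\mu$ and $[-,-]_\rho$ and collecting the $A$- and $V$-parts separately should give
$$P_{X}(Y,Z)=P_{x}(y,z)+R_{\rho,\mu}(x,y)(c)+R_{\rho,\mu}(x,z)(b)+S_{\rho,\mu}(y,z)(a),$$
in which the $A$-component is exactly $P_{x}(y,z)$ and the three $V$-terms are precisely the operators $R_{\rho,\mu}$ and $S_{\rho,\mu}$ of \eqref{eq:repH 1}--\eqref{eq:repH 2}. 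This identity is, in effect, the reason the two compatibility axioms \eqref{eq:rep 1} and \eqref{eq:rep 2} were built into the notion of a representation.

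With this formula in hand, I would substitute $X=x+a$, $Y=y+b$, $Z=z+c$, $W=w+d$ into the desired relation $P_{X\cdot_\mu Y}(Z,W)=X\cdot_\mu P_{Y}(Z,W)+Y\cdot_\mu P_{X}(Z,W)$ and once more separate components. Using $X\cdot_\mu Y=x\cdot_A y+\mu(x)b+\mu(y)a$, the $A$-component reduces to the Hertling-Manin relation \eqref{eq:HM relation} for $(A,\cdot_A,[-,-]_A)$, which holds by hypothesis. The $V$-component breaks into four pieces according to which of $a,b,c,d$ each term acts on: the coefficients of $d$ and of $c$ each reproduce condition \eqref{eq:rep 1} (with arguments $x\cdot_A y,z$ and $x\cdot_A y,w$ respectively), while the coefficients of $b$ and of $a$, after transposing a term to the other side, each reproduce condition \eqref{eq:rep 2} (pairing $z,w$ with $x$, respectively with $y$). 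Hence every piece vanishes exactly because $(V;\rho,\mu)$ is a representation, and the Hertling-Manin relation holds on $A\oplus V$.

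There is no conceptual obstacle here; the difficulty is entirely organizational. The main work is carrying out the two expansions of $P$ without sign or bookkeeping errors and correctly matching each of the four $V$-components to the representation axiom that annihilates it. Once the displayed formula for $P_{X}(Y,Z)$ is derived cleanly, the remainder is a routine term-by-term comparison.
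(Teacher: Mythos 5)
Your proof is correct: the displayed decomposition $P_{X}(Y,Z)=P_{x}(y,z)+R_{\rho,\mu}(x,y)(c)+R_{\rho,\mu}(x,z)(b)+S_{\rho,\mu}(y,z)(a)$ checks out, and the four $V$-components of the Hertling--Manin relation on $A\oplus V$ match \eqref{eq:rep 1} and \eqref{eq:rep 2} exactly as you describe. The paper omits this proof entirely, calling the conclusion straightforward, and your direct component-by-component verification is precisely the intended argument.
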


\begin{ex}{\rm
 Let $(V;\rho,\mu)$ be a representation of a Poisson algebra $(P,\cdot_P,\{-,-\}_P)$, i.e. $(V;\rho)$ is a  representation of the Lie algebra $(P,\{-,-\}_P)$ and $(V;\mu)$ is a representation of the commutative associative algebra $(P,\cdot_P)$ satisfying
   \begin{eqnarray}
     \label{eq:Poisson rep 1}\rho(x)\circ\mu(y)-\mu(y)\circ \rho(x)-\mu([x,y]_A)&=&0,\\
     \label{eq:Poisson rep 2} \mu(x)\circ\rho(y)+\mu(y)\circ \rho(x)-\rho(x\cdot_A y)&=&0,\quad \forall~x,y,z\in P.
   \end{eqnarray}
  Then $(V;\rho,\mu)$ is also a representation of the $F$-manifold algebra given by this Poisson algebra $P$.}
\end{ex}

Let $(V;\rho,\mu)$ be a representation of a Poisson algebra $(P,\cdot_P,\{-,-\}_P)$. Then the triple $(V^*;\rho^*,-\mu^*)$ is also a representation of $P$. But  $F$-manifold algebras do not have this property. In fact, we have

\begin{pro}
  Let $(A,\cdot_A,[-,-]_A)$ be an $F$-manifold algebra. If the triple $(V;\rho,\mu)$, where  $(V;\rho)$ is a  representation of the Lie algebra $(A,[-,-]_A)$ and $(V;\mu)$ is a representation of the commutative associative algebra $(A,\cdot_A)$, satisfies
  \begin{eqnarray}
     \label{eq:corep 1}R_{\rho,\mu}(x\cdot_A y,z)= R_{\rho,\mu}(y,z)\circ\mu(x)+ R_{\rho,\mu}(x,z)\circ \mu(y),\\
     \label{eq:corep 2} \mu(P_x(y,z))=T_{\rho,\mu}(y,z)\circ\mu(x)-\mu(x)\circ T_{\rho,\mu}(y,z),
   \end{eqnarray}
   where $R_{\rho,\mu}$ is given by \eqref{eq:repH 1} and $T_{\rho,\mu}: A\otimes A\rightarrow \gl(V)$ is defined by
   \begin{eqnarray}
    \label{eq:repH 3} T_{\rho,\mu}(x,y)&=&\rho(y)\circ\mu(x)+\rho(x)\circ\mu(y)-\rho(x\cdot_A y),\quad\forall~x,y\in A,
   \end{eqnarray}
   then $(V^*;\rho^*,-\mu^*)$ is  a representation of $A$.
\end{pro}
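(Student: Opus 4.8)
The plan is to verify directly that the dual triple $(V^*;\rho^*,-\mu^*)$ satisfies the two compatibility conditions \eqref{eq:rep 1} and \eqref{eq:rep 2} in the definition of a representation of an $F$-manifold algebra, drawing on the hypotheses \eqref{eq:corep 1} and \eqref{eq:corep 2}. Since it has already been recorded above that $(V^*;\rho^*)$ is a representation of the Lie algebra $(A,[-,-]_A)$ and that $(V^*;-\mu^*)$ is a representation of the commutative associative algebra $(A,\cdot_A)$, the only thing left to check is these two compatibility conditions; everything will be done by pairing against arbitrary $\alpha\in V^*$ and $v\in V$ and transporting every operator onto $v$ via $\langle\rho^*(x)\alpha,v\rangle=-\langle\alpha,\rho(x)v\rangle$ and $\langle\mu^*(x)\alpha,v\rangle=-\langle\alpha,\mu(x)v\rangle$.

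First I would compute the auxiliary operators $R_{\rho^*,-\mu^*}$ and $S_{\rho^*,-\mu^*}$ built from the dual triple. A short pairing computation, expanding the definitions \eqref{eq:repH 1} and \eqref{eq:repH 2} with $\rho$ replaced by $\rho^*$ and $\mu$ replaced by $-\mu^*$, is expected to yield the two transpose identities
\[
\langle R_{\rho^*,-\mu^*}(x,y)\alpha,v\rangle=\langle\alpha,R_{\rho,\mu}(x,y)v\rangle,\qquad \langle S_{\rho^*,-\mu^*}(x,y)\alpha,v\rangle=-\langle\alpha,T_{\rho,\mu}(x,y)v\rangle.
\]
In other words $R_{\rho^*,-\mu^*}(x,y)$ is the transpose of $R_{\rho,\mu}(x,y)$, whereas $S_{\rho^*,-\mu^*}(x,y)$ is the transpose of $-T_{\rho,\mu}(x,y)$, with $T_{\rho,\mu}$ as in \eqref{eq:repH 3}. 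The emergence of $T_{\rho,\mu}$ in place of $S_{\rho,\mu}$ is the structural reason the hypotheses are stated in terms of $T_{\rho,\mu}$ and why \eqref{eq:corep 1}--\eqref{eq:corep 2} are not literally \eqref{eq:rep 1}--\eqref{eq:rep 2}.

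With these two identities in hand, I would translate each defining condition for $(V^*;\rho^*,-\mu^*)$ back to a statement on $V$. Pairing condition \eqref{eq:rep 1} for the dual triple against $\alpha,v$ and pushing the factors $-\mu^*(x)$ and $-\mu^*(y)$ onto $v$ converts the left-hand side into $\langle\alpha,R_{\rho,\mu}(x\cdot_A y,z)v\rangle$ and the right-hand side into $\langle\alpha,\big(R_{\rho,\mu}(y,z)\circ\mu(x)+R_{\rho,\mu}(x,z)\circ\mu(y)\big)v\rangle$; equating these for all $\alpha,v$ is exactly the hypothesis \eqref{eq:corep 1}. Similarly, pairing condition \eqref{eq:rep 2} for the dual triple and using the second transpose identity reduces it, after the signs cancel, to $\mu(P_x(y,z))=T_{\rho,\mu}(y,z)\circ\mu(x)-\mu(x)\circ T_{\rho,\mu}(y,z)$, which is precisely \eqref{eq:corep 2}.

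Thus the argument is a chain of dual-pairing computations with no conceptual obstacle beyond sign bookkeeping. The one genuinely delicate point, which I would take care to handle consistently, is the sign built into the dual actions $\rho^*$ and $-\mu^*$: it is exactly this sign that turns $S_{\rho,\mu}$ into $-T_{\rho,\mu}$ and therefore forces the modified compatibility conditions \eqref{eq:corep 1}--\eqref{eq:corep 2} instead of \eqref{eq:rep 1}--\eqref{eq:rep 2}.
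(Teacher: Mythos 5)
Your proposal is correct and follows essentially the same route as the paper's proof: compute $R_{\rho^*,-\mu^*}$ and $S_{\rho^*,-\mu^*}$ by dual pairing, identify them as the transposes of $R_{\rho,\mu}$ and $-T_{\rho,\mu}$ respectively, and then pair conditions \eqref{eq:rep 1}--\eqref{eq:rep 2} for $(V^*;\rho^*,-\mu^*)$ against $\alpha\in V^*$, $v\in V$ to reduce them exactly to the hypotheses \eqref{eq:corep 1}--\eqref{eq:corep 2}. If anything, your sign bookkeeping is more careful than the printed argument, whose displayed identity $\langle S_{\rho^*,-\mu^*}(x,y)(\alpha),v\rangle=\langle\alpha, T_{\rho,\mu}(x,y)(v)\rangle$ omits the minus sign that you correctly record and that makes the final reduction come out consistently.
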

\begin{proof}
By direct calculations, for all $x,y\in A,v\in V,\alpha\in V^*$, we have
\begin{eqnarray*}
  \langle R_{\rho^*,-\mu^*}(x,y)(\alpha),v\rangle=\langle\alpha, R_{\rho,\mu}(v)\rangle,\quad \langle S_{\rho^*,-\mu^*}(x,y)(\alpha),v\rangle=\langle\alpha, T_{\rho,\mu}(v)\rangle.
\end{eqnarray*}
Furthermore, we have
\begin{eqnarray*}
 &&\langle R_{\rho^*,-\mu^*}(x\cdot_A y,z)(\alpha)+\mu^*(x) R_{\rho^*,-\mu^*}(y,z)(\alpha)+\mu^*(y) R_{\rho^*,-\mu^*}(x,z)(\alpha),v\rangle\\
 &=&\langle \alpha,R_{\rho,\mu}(x\cdot_A y,z)(v)- R_{\rho,\mu}(y,z)\mu(x)(v)- R_{\rho,\mu}(x,z) \mu(y)(v)\rangle
\end{eqnarray*}
and
\begin{eqnarray*}
 &&\langle -\mu^*(P_x(y,z))(\alpha)+S_{\rho^*,-\mu^*}(y,z)\mu^*(x)(\alpha)+\mu^*(x) S_{\rho^*,-\mu^*}(y,z)(\alpha),v\rangle\\
 &=&\langle \alpha,\mu(P_x(y,z))(v)-T_{\rho,\mu}(y,z)\mu(x)(v)+\mu(x) T_{\rho,\mu}(y,z)(v)\rangle.
\end{eqnarray*}
By the hypothesis and the definition of representation, the conclusion follows immediately.
\end{proof}

\begin{ex}\label{ex:dual representation}{\rm
  Let $(A,\cdot_A,[-,-]_A)$ be an $F$-manifold algebra. Then $(A;\ad,\huaL)$ is a representation of $A$, which is also called the {\bf regular representation}. Furthermore, if the $F$-manifold algebra also satisfies the following relations:
  \begin{eqnarray}
   \label{eq:coh1} P_{x\cdot_A y}(z,w)&=&P_y(z,x\cdot_A w)+P_x(z,y\cdot_A w),\\
    \label{eq:coh2} P_x(y,z)\cdot_A w&=&x\cdot_A Q(y,z,w)-Q(y,z,x\cdot_A w),\quad\forall~x,y,z,w\in A,
  \end{eqnarray}
 where $Q:\otimes^3\rightarrow
A$ is defined by
$$Q(x,y,z)=[x\cdot_A y,z]_A+[y\cdot_A z,x]_A+[z\cdot_A x,y]_A.$$
Then $(A^*;\ad^*,-\huaL^*)$ is a representation of $A$.}
\end{ex}

\begin{defi}
  A {\bf coherence $F$-manifold algebra} is an $F$-manifold algebra such that \eqref{eq:coh1} and  \eqref{eq:coh2} hold.
\end{defi}

\begin{pro}
Let $(A,\cdot_A,[-,-]_A)$ be an $F$-manifold algebra. Suppose that
there is a nondegenerate symmetric bilinear form $\frak B$ such
that $\frak B$ is invariant in the following sense
\begin{equation}
\frak B(x\cdot_A y,z)=\frak B(x,y\cdot_A z),\;\;\frak
B([x,y]_A,z)=\frak B(x,[y,z]_A),\;\;\forall x,y,z\in A.
\end{equation}
Then $(A,\cdot_A,[-,-]_A)$ is a coherence $F$-manifold algebra.
\end{pro}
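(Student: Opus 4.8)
The plan is to use nondegeneracy of $\mathfrak{B}$ to transport the (always valid) regular representation onto the dual regular representation, since the conditions \eqref{eq:coh1}--\eqref{eq:coh2} are exactly the requirement that $(A^*;\ad^*,-\huaL^*)$ be a representation of $A$ (this is the content of Example~\ref{ex:dual representation}, read as an equivalence).

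First I would rephrase invariance in operator form. Writing $\huaL_x$ for multiplication by $x$ and $\ad_x=[x,-]_A$, the two invariance identities, together with the symmetry of $\mathfrak{B}$, the commutativity of $\cdot_A$ and the antisymmetry of $[-,-]_A$, say precisely that $\huaL_x$ is self-adjoint and $\ad_x$ is skew-adjoint for $\mathfrak{B}$:
$$\mathfrak{B}(\huaL_x a,b)=\mathfrak{B}(a,\huaL_x b),\qquad \mathfrak{B}(\ad_x a,b)=-\mathfrak{B}(a,\ad_x b),\qquad\forall~a,b,x\in A.$$
Next I would check that the musical map $\mathfrak{B}^\flat\colon A\to A^*$, $\mathfrak{B}^\flat(x)=\mathfrak{B}(x,-)$, is an isomorphism of representations from $(A;\ad,\huaL)$ to $(A^*;\ad^*,-\huaL^*)$: evaluating on an arbitrary element and using the defining formulas for $\ad^*$ and $\huaL^*$, skew-adjointness of $\ad$ gives $\mathfrak{B}^\flat\circ\ad_x=\ad^*_x\circ\mathfrak{B}^\flat$ and self-adjointness of $\huaL$ gives $\mathfrak{B}^\flat\circ\huaL_x=-\huaL^*_x\circ\mathfrak{B}^\flat$; nondegeneracy makes $\mathfrak{B}^\flat$ bijective.

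By Example~\ref{ex:dual representation} the regular representation $(A;\ad,\huaL)$ is a representation of $A$, and the representation axioms \eqref{eq:rep 1}--\eqref{eq:rep 2} are preserved under an isomorphism of the defining data; hence $(A^*;\ad^*,-\huaL^*)$ is a representation of $A$. To finish I would invoke the Proposition preceding Example~\ref{ex:dual representation}, whose pairing computation is reversible: for all $\alpha\in A^*$ and $v\in A$ it rewrites the failure of \eqref{eq:rep 1}--\eqref{eq:rep 2} for $(A^*;\ad^*,-\huaL^*)$ as the pairing of $\alpha$ with the failure of \eqref{eq:corep 1}--\eqref{eq:corep 2} (for $(\rho,\mu)=(\ad,\huaL)$) evaluated at $v$, so by nondegeneracy of the canonical pairing one set of conditions holds iff the other does. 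Since $R_{\ad,\huaL}(x,y)(z)=P_x(y,z)$ and $T_{\ad,\huaL}(x,y)(z)=-Q(x,y,z)$, conditions \eqref{eq:corep 1}--\eqref{eq:corep 2} specialize exactly to \eqref{eq:coh1}--\eqref{eq:coh2}, so $A$ is coherent.

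Alternatively, one can argue by hand: the same invariance identities yield the adjunction $\mathfrak{B}(P_x(y,z),w)=\mathfrak{B}(x,Q(y,z,w))$ (move each bracket and product across $\mathfrak{B}$ and recognize $[y\cdot_A z,w]_A+[z\cdot_A w,y]_A+[w\cdot_A y,z]_A=Q(y,z,w)$). Pairing \eqref{eq:coh1}--\eqref{eq:coh2} against an arbitrary element and applying this adjunction together with self-adjointness of $\huaL$ reduces each to a $\mathfrak{B}$-free identity; for instance \eqref{eq:coh2} becomes $Q(y,z,w\cdot_A t)=Q(y,z,w)\cdot_A t-w\cdot_A P_t(y,z)$. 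The main obstacle is then verifying these identities directly: they are not formal consequences of the commutative associative and Lie structures alone, and the Hertling--Manin relation enters essentially through $P_{x\cdot_A y}(z,w)=x\cdot_A P_y(z,w)+y\cdot_A P_x(z,w)$, which is needed to simplify the term $P_{y\cdot_A z}(w,t)$ produced by the expansion. I expect this bookkeeping — and, in the conceptual route, the justification that the earlier pairing argument really is an equivalence — to be the only delicate point; everything else is direct substitution.
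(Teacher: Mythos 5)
Your primary, representation-theoretic route is correct, but it is organized quite differently from the paper's proof, which is a short direct computation with the form: from invariance the paper derives the adjunction identities $\frkB(P_x(y,z),w)=\frkB(z,P_x(y,w))=\frkB(x,Q(y,z,w))$, then pairs the failure of \eqref{eq:coh1} (resp.\ \eqref{eq:coh2}) against a second vector $w_2$, dualizing in the \emph{last} ($w$-) slot, and moves every factor across $\frkB$ so that what appears in the other slot is exactly the Hertling--Manin expression $P_{x\cdot_A w_2}(y,z)-x\cdot_A P_{w_2}(y,z)-w_2\cdot_A P_x(y,z)=0$; nondegeneracy finishes. Your route instead checks that $\frkB^\flat$ intertwines $(A;\ad,\huaL)$ with $(A^*;\ad^*,-\huaL^*)$ (your sign bookkeeping is right), transports the representation property, and runs the dual-representation proposition backwards; this is legitimate because the displayed pairing identities in that proposition's proof are unconditional, so in finite dimensions (a standing assumption of the paper) they give the equivalence you need, and your identifications $R_{\ad,\huaL}(x,y)(z)=P_x(y,z)$ and $T_{\ad,\huaL}(x,y)(z)=-Q(x,y,z)$ are correct, so \eqref{eq:corep 1}--\eqref{eq:corep 2} do specialize to \eqref{eq:coh1}--\eqref{eq:coh2}. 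What your version buys is a conceptual reading (coherence is precisely self-duality of the regular representation via $\frkB$); what the paper's version buys is self-containedness, needing no upgrade of an earlier one-directional statement to an equivalence.

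Your alternative ``by hand'' route, however, contains a genuine gap, and it is exactly the pitfall the paper's choice of slot avoids. Dualizing \eqref{eq:coh2} in the $x$-slot, as you do, produces the $\frkB$-free identity $Q(y,z,w\cdot_A t)=Q(y,z,w)\cdot_A t-w\cdot_A P_t(y,z)$; but this cannot be ``verified directly'' from the Hertling--Manin relation, because it is not a formal consequence of it. Expanding both sides (using only the definition of $P$ and commutativity), it is equivalent to $P_{y\cdot_A z}(w,t)=P_y(z\cdot_A w,t)+P_z(y\cdot_A w,t)$, which is \eqref{eq:coh1} relabelled, and \eqref{eq:coh1} does \emph{not} follow from \eqref{eq:HM relation} alone. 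Concretely, take $A=\K[u,v]/(u,v)^2$ with its commutative product and the Lie bracket determined by $[u,v]_A=1$, $[1,u]_A=[1,v]_A=0$: Jacobi is trivial, and Hertling--Manin holds because $P_1=0$ while every value of $P_u,P_v$ lies in the square-zero ideal $(u,v)$, so both sides of \eqref{eq:HM relation} vanish when $x,y\in(u,v)$; yet $P_{u\cdot_A v}(u,1)=0$ whereas $P_v(u,u)+P_u(u,v)=2u-u=u\neq 0$, so \eqref{eq:coh1} fails (consistently, this algebra admits no invariant nondegenerate form, since invariance forces $\frkB(1,-)=0$). So after you use \eqref{eq:HM relation} to rewrite $P_{y\cdot_A z}(w,t)$, what remains is not an identity of commutative-plus-Lie algebra but the coherence condition \eqref{eq:coh1} itself. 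The repair is easy: first prove \eqref{eq:coh1} by dualizing it in the $w$-slot, where the reduction really is to \eqref{eq:HM relation}, and then observe that your reduced identity for \eqref{eq:coh2} is precisely \eqref{eq:coh1}; or dualize \eqref{eq:coh2} in the $w$-slot from the start, which is what the paper does.
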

\begin{proof}
  By the invariance of $\frak B$, we have
$$\frkB(P_x(y,z),w)=\frkB(z, P_x(y,w))=\frkB(x, Q(y,z,w)),\quad \forall~x,y,z,w\in P.$$
By the above relations, for $x,y,z,w_1,w_2\in P$, we have
\begin{eqnarray*}
\frkB(P_{x\cdot_A y}(z,w_1)-P_y(z,x\cdot_A w_1)-P_x(z,y\cdot_A w_1),w_2)\\
=\frkB(w_1, P_{x\cdot_A y}(z,w_2)-x\cdot_A P_y(z,w_2)-y\cdot_A P_x(z,w_2))
\end{eqnarray*}
and
\begin{eqnarray*}
&&\frkB(P_x(y,z)\cdot_A w_1-x\cdot_A Q(y,z,w_1)+Q(y,z,x\cdot_A w),w_2)\\
&=&\frkB(w_1, -P_{x\cdot_A y}(z,w_2)+x\cdot_A P_y(z,w_2)+y\cdot_A P_x(z,w_2)).
\end{eqnarray*}
By the fact that $A$ is an $F$-manifold algebra and $\frkB$ is nondegenerate, we deduce that \eqref{eq:coh1} and \eqref{eq:coh2} hold. Thus  $(A,\cdot_A,[-,-]_A)$ is a coherence $F$-manifold algebra.
\end{proof}

\subsection{Pre-$F$-manifold algebras}
Recall that a {\bf Zinbiel algebra} is a pair $(A,\diamond)$, where
$A$ is a vector space, and  $\diamond:A\otimes A\longrightarrow A$ is
a bilinear multiplication satisfying that for all $x,y,z\in A$,
\begin{equation}
  x\diamond(y\diamond z)=(y\diamond x)\diamond z+(x\diamond y)\diamond z.
\end{equation}

\begin{lem}\label{lem:den-ass}
Let $(A,\diamond)$ be a Zinbiel algebra. Then $(A,\cdot)$ is a commutative associative algebra, where $x\cdot y=x\diamond y+y\diamond x$. Moreover, for $x\in A$, define $\frkL_{ x}:A\longrightarrow\gl(A)$ by
\begin{equation}\label{eq:dendriform-rep}
\frkL_{ x}y=x\diamond y,\quad\forall~y\in A.
\end{equation}
Then $(A;\frkL)$ is a representation of the commutative associative algebra $(A,\cdot)$.
\end{lem}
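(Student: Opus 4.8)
The plan is to verify directly that the proposed product $x\cdot y = x\diamond y + y\diamond z$ is commutative and associative, and then that $\frkL$ gives a representation. Commutativity is immediate from the symmetric definition: $x\cdot y = x\diamond y + y\diamond x = y\cdot x$. For associativity I would expand both $(x\cdot y)\cdot z$ and $x\cdot(y\cdot z)$ into the four (respectively four) Zinbiel products and repeatedly apply the defining Zinbiel identity $x\diamond(y\diamond z) = (y\diamond x)\diamond z + (x\diamond y)\diamond z$ to rewrite the right-nested terms as left-nested ones, so that both sides become the same symmetric combination of fully left-normalized products. This is the computational core, but it is routine once one organizes the terms carefully.

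The cleanest route to associativity is to note that the Zinbiel identity itself can be read as
\begin{equation*}
x\diamond(y\diamond z) = (x\cdot y)\diamond z,
\end{equation*}
since $(y\diamond x)\diamond z + (x\diamond y)\diamond z = (x\diamond y + y\diamond x)\diamond z = (x\cdot y)\diamond z$. Using this rewriting rule, I would compute
\begin{equation*}
(x\cdot y)\cdot z = (x\cdot y)\diamond z + z\diamond(x\cdot y) = x\diamond(y\diamond z) + z\diamond x\diamond y + z\diamond y \diamond x,
\end{equation*}
where the last two terms come from expanding $z\diamond(x\diamond y + y\diamond x)$ via the same identity, and similarly expand $x\cdot(y\cdot z)$. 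After this substitution every term is a fully left-parenthesized triple product, and matching the two sides becomes a finite symbol comparison. I expect both expansions to reduce to the totally symmetric sum over the associated left-normalized products, yielding equality.

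For the representation claim I must check $\frkL_{x\cdot y} = \frkL_x\circ\frkL_y$, i.e.\ $(x\cdot y)\diamond w = x\diamond(y\diamond w)$ for all $w$. But this is exactly the rewriting rule derived above, $x\diamond(y\diamond w) = (x\cdot y)\diamond w$, so the representation property is literally a restatement of the Zinbiel identity and requires no further work. Thus the only genuine obstacle is the bookkeeping in the associativity computation, and even that collapses once the Zinbiel axiom is packaged as the single substitution $a\diamond(b\diamond c) = (a\cdot b)\diamond c$.
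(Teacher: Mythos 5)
Your proof is correct. The paper itself states this lemma without proof (it is a standard fact about Zinbiel algebras, going back to Loday), so there is no argument to compare against; your write-up supplies a complete verification. The key step — packaging the Zinbiel axiom as the single rewriting rule $a\diamond(b\diamond c)=(a\cdot b)\diamond c$ — is exactly the right move: it makes the representation claim $\frkL_{x\cdot y}=\frkL_x\circ\frkL_y$ a literal restatement of the axiom, and it reduces associativity to the observation that both $(x\cdot y)\cdot z$ and $x\cdot(y\cdot z)$ expand to the totally symmetric sum $\sum_{\sigma}(x_{\sigma(1)}\diamond x_{\sigma(2)})\diamond x_{\sigma(3)}$ over all six permutations. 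Two trivial slips to fix: in your opening line the product should read $x\cdot y=x\diamond y+y\diamond x$ (not $y\diamond z$), and in your displayed expansion the terms written as $z\diamond x\diamond y+z\diamond y\diamond x$ should be $(z\cdot x)\diamond y+(z\cdot y)\diamond x$, since $z\diamond(x\diamond y)=(z\cdot x)\diamond y$ is a mixed expression, not a left-normalized diamond product; with that notation corrected, the two sides differ only by commutativity of $\cdot$, and the argument closes.
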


Now we are ready to give the main notion in this subsection.

\begin{defi}
  A {\bf  pre-$F$-manifold algebra} is a triple $(A,\diamond,\ast)$, where $(A,\diamond)$ is a Zinbiel algebra and $(A,\ast)$ is a pre-Lie algebra, such that  the following compatibility conditions hold:
  \begin{eqnarray}
    \label{eq:pre-HM 1}F_1(x\cdot y,z,w)=x\diamond F_1(y,z,w)+y\diamond F_1(x,z,w),\\
    \label{eq:pre-HM 2}(F_1(x,y,z)+F_1(x,z,y)+F_2(y,z,x))\diamond w=F_2(y,z,x\diamond w)-x\diamond F_2(y,z,w)
  \end{eqnarray}
 for all $x,y,z,w\in A$. Here $F_1,F_2:\otimes^3 A\longrightarrow A$ are defined by
  \begin{eqnarray}
    F_1(x,y,z)&=&x\ast(y\diamond z)-y\diamond(x\ast z)-[x,y]\diamond z,\\
    F_2(x,y,z)&=&x\diamond(y\ast z)+y\diamond(x\ast z)-(x\cdot y)\ast z
  \end{eqnarray}
and the operation $\cdot$ and bracket $[-,-]$ are defined by
\begin{equation}\label{eq:pHM-operations}
  x\cdot y=x\diamond y+y\diamond x,\quad [x,y]=x\ast y-y\ast x.
\end{equation}
\end{defi}

\begin{rmk}
  If $F_1$ and $F_2$ vanish in the definition of a pre-$F$-manifold algebra $(A,\diamond,\ast)$, then we obtain Aguiar's  notion of a  pre-Poisson algebra. See \cite{A2} for more details.
\end{rmk}

\begin{thm}
  Let $(A,\diamond,\ast)$ be a pre-$F$-manifold algebra. Then
   \begin{itemize}
\item[$\rm(i)$]
   $(A,\cdot,[-,-])$ is an $F$-manifold algebra, where the operation $\cdot$ and bracket $[-,-]$ are given by \eqref{eq:pHM-operations}, which is called the {\bf sub-adjacent
$F$-manifold algebra} of $(A,\diamond,\ast)$  and denoted by $A^c$.
\item[$\rm(ii)$]$(A;L,\frkL)$ is a representation of the sub-adjacent
$F$-manifold algebras $A^c$, where $L$ and $\frkL$ are given by \eqref{eq:defiLpreLie} and \eqref{eq:dendriform-rep}, respectively.
\end{itemize}
\end{thm}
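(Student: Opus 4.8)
The plan is to reduce both parts to the single algebraic identity
\[
P_x(y,z)=F_1(x,y,z)+F_1(x,z,y)+F_2(y,z,x),
\]
which rewrites the bilinear map $P_{-}(-,-)$ of the sub-adjacent structure entirely in terms of the splitting maps $F_1,F_2$. Before establishing it, I would dispose of the underlying associativity and Lie conditions. Since $(A,\diamond)$ is Zinbiel, Lemma \ref{lem:den-ass} gives that $(A,\cdot)$ with $x\cdot y=x\diamond y+y\diamond x$ is commutative associative; since $(A,\ast)$ is pre-Lie, Lemma \ref{lem:pre-Lie-Lie} gives that $(A,[-,-])$ with $[x,y]=x\ast y-y\ast x$ is a Lie algebra. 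Hence for (i) only the Hertling-Manin relation remains to be checked, and for (ii) only the two compatibility conditions \eqref{eq:rep 1} and \eqref{eq:rep 2}, because $(A;L)$ is already a representation of $(A,[-,-])$ by Lemma \ref{lem:pre-Lie-Lie} and $(A;\frkL)$ is already a representation of $(A,\cdot)$ by Lemma \ref{lem:den-ass}.

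The key identity I would verify by direct expansion. Writing out $F_1(x,y,z)+F_1(x,z,y)$, the pair $x\ast(y\diamond z)+x\ast(z\diamond y)$ collapses to $x\ast(y\cdot z)$; adding $F_2(y,z,x)$ contributes $-(y\cdot z)\ast x$, so the combination $x\ast(y\cdot z)-(y\cdot z)\ast x=[x,y\cdot z]$ emerges. The remaining $\diamond$-terms pair off using $z\ast x-x\ast z=-[x,z]$, $y\ast x-x\ast y=-[x,y]$, and $u\diamond v+v\diamond u=u\cdot v$, producing exactly $-[x,y]\cdot z-y\cdot[x,z]$, so the total equals $P_x(y,z)$. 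This bookkeeping is the technical heart and the step most prone to sign errors; once it is correct everything else is mechanical.

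For the Hertling-Manin relation in (i), I would feed the identity back in. Expanding $P_{x\cdot y}(z,w)=F_1(x\cdot y,z,w)+F_1(x\cdot y,w,z)+F_2(z,w,x\cdot y)$, the first two terms split through \eqref{eq:pre-HM 1}, while for the third I would first rewrite \eqref{eq:pre-HM 2} (using the key identity on its left-hand bracket) in the form
\[
F_2(z,w,a\diamond d)=P_a(z,w)\diamond d+a\diamond F_2(z,w,d),
\]
and apply it to the two summands of $x\cdot y=x\diamond y+y\diamond x$. Collecting all terms of shape $x\diamond(\cdots)$, $y\diamond(\cdots)$, $(\cdots)\diamond x$, $(\cdots)\diamond y$ and invoking the key identity once more to recognize $P_x(z,w)$ and $P_y(z,w)$, the expression reassembles into $x\cdot P_y(z,w)+y\cdot P_x(z,w)$, which is precisely the Hertling-Manin relation.

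For part (ii), I would note that with $\rho=L$ and $\mu=\frkL$ the operators \eqref{eq:repH 1} and \eqref{eq:repH 2} evaluate to $R_{L,\frkL}(x,y)(z)=F_1(x,y,z)$ and $S_{L,\frkL}(x,y)(z)=F_2(x,y,z)$. Applying \eqref{eq:rep 1} to $w$ then reads $F_1(x\cdot y,z,w)=x\diamond F_1(y,z,w)+y\diamond F_1(x,z,w)$, which is exactly \eqref{eq:pre-HM 1}; applying \eqref{eq:rep 2} to $w$, and using the key identity to rewrite $\mu(P_x(y,z))(w)=P_x(y,z)\diamond w$, reads $(F_1(x,y,z)+F_1(x,z,y)+F_2(y,z,x))\diamond w=F_2(y,z,x\diamond w)-x\diamond F_2(y,z,w)$, which is exactly \eqref{eq:pre-HM 2}. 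Thus both representation axioms are equivalent to the defining compatibility conditions, and (ii) follows. The only genuine obstacle throughout is keeping the non-symmetric products $\diamond$ and $\ast$ straight in the expansions; the correct proof of the key identity is what makes the rest routine.
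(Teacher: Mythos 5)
Your proposal is correct and follows essentially the same route as the paper: both rest on the key identity $P_x(y,z)=F_1(x,y,z)+F_1(x,z,y)+F_2(y,z,x)$, use it together with the two compatibility conditions to verify the Hertling--Manin relation, and prove (ii) by observing that $F_1(x,y,z)=R_{L,\frkL}(x,y)(z)$ and $F_2(x,y,z)=S_{L,\frkL}(x,y)(z)$, so that the representation axioms coincide with the defining conditions of a pre-$F$-manifold algebra. The only difference is presentational: the paper verifies the Hertling--Manin relation by grouping $P_{x\cdot y}(z,w)-x\cdot P_y(z,w)-y\cdot P_x(z,w)$ into blocks that vanish by the axioms, whereas you substitute the rearranged form of the second compatibility condition directly, which is the same computation.
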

\begin{proof}
(i) By Lemma \ref{lem:pre-Lie-Lie} and Lemma \ref{lem:den-ass}, we deduce that $(A,\cdot)$ is a commutative associative algebra and $(A,[-,-])$ is a Lie algebra.  By direct calculations, we obtain
  \begin{equation}\label{eq:pre-HM 3}
  P_x(y,z)=F_1(x,y,z)+F_1(x,z,y)+F_2(y,z,x),\quad\forall~x,y,z\in A.
  \end{equation}

  By \eqref{eq:pre-HM 1}, \eqref{eq:pre-HM 2} and \eqref{eq:pre-HM 3}, we have
  \begin{eqnarray*}
    &&P_{x\cdot y}(z,w)-x\cdot P_{y}(z,w)-y\cdot P_{x}(z,w)\\
    &=&F_1(x\cdot y,z,w)+F_1(x\cdot y,w,z)+F_2(z,w,x\cdot y)-x\cdot(F_1(y,z,w)+F_1(y,w,z)+F_2(z,w,y)) \\
    &&-y\cdot(F_1(x,z,w)+F_1(x,w,z)+F_2(z,w,x))\\
    &=&\big( F_1(x\cdot y,z,w)-x\diamond F_1(y,z,w)-y\diamond F_1(x,z,w)\big)\\
    &&+\big( F_1(x\cdot y,w,z)-x\diamond F_1(y,w,z)-y\diamond F_1(x,w,z)\big)\\
    &&+\big(F_2(z,w,x\diamond y)-F_1(x,z,w)\diamond y+F_1(x,w,z)\diamond y+F_2(z,w,x)\diamond y+x\diamond F_2(z,w,y)\big)\\
    &&+\big(F_2(z,w,y\diamond x)-F_1(y,z,w)\diamond x+F_1(y,w,z)\diamond x+F_2(z,w,y)\diamond x+y\diamond F_2(z,w,x)\big)\\
    &=& 0.
  \end{eqnarray*}
  Thus $(A,\cdot,[-,-])$ is an $F$-manifold algebra.

(ii) By Lemma \ref{lem:pre-Lie-Lie}, $(A;L)$ is a representation of the sub-adjacent Lie algebra $A^c$. By Lemma \ref{lem:den-ass},  $(A;\frkL)$ is a representation of the commutative associative algebra $(A,\cdot)$. Moreover, note that $F_1(x,y,z)=R_{L,\frkL}(x,y)(z)$ and $F_2(x,y,z)=S_{L,\frkL}(x,y)(z)$. Thus \eqref{eq:pre-HM 1} implies that \eqref{eq:rep 1} holds and, by \eqref{eq:pre-HM 3}, \eqref{eq:pre-HM 2} implies that \eqref{eq:rep 2} holds.  Thus $(A;L,\frkL)$ is a representation of the sub-adjacent $F$-manifold algebra $A^c$.
\end{proof}

\begin{rmk}{\rm
In fact, the operad $\PreFMan$ of pre-$F$-manifold algebras is the
arity splitting of the operad $\FMan$ of $F$-manifold algebras
(into two pieces) in the sense of \cite{PBG} or the disuccessor of
$\FMan$ in the sense of \cite{BBGN}.}

\end{rmk}

\emptycomment{\begin{pro}
 Let $(A,\cdot_A,[-,-]_A)$ be an $F$-manifold algebra. If $\omega\in \wedge^2 A^*$ is nondegenerate, both a  Connes cocycle on the commutative associative $(A,\cdot_A)$ and a symplectic structure on the Lie algebra $(A,[-,-]_A)$, i.e.,
 \begin{eqnarray*}
   \omega(x\cdot_A y,z)+\omega(y\cdot_A z,x)+\omega(z\cdot_A x,y)&=&0,\\
    \omega([x, y]_A,z)+\omega([y, z]_A,x)+\omega([z,x]_A,y)&=&0,\quad\forall~x,y,z\in A.
 \end{eqnarray*}
 Then  $(A,\diamond,\ast)$ is a pre-$F$-manifold algebra, where $\diamond$ and $\ast$ are determined by
  \begin{eqnarray*}
    \omega(x\diamond y,z)=\omega(y,z\cdot_A x),\quad \omega(x\ast y,z)=-\omega(y,[x,z]_A),\quad\forall~x,y,z\in A.
  \end{eqnarray*}
\end{pro}
\begin{proof}
  It follows by a direct calculation. \yh{a few steps}
\end{proof}}

\subsection{Rota-Baxter operators and $\huaO$-operators on $F$-manifold algebras}

The notion of an $\mathcal O$-operator was first given for Lie algebras by Kupershmidt in \cite{K} as a natural generalization of the classical Yang-Baxter equation.

A linear map $T:V\longrightarrow A$ is called an {\bf $\huaO$-operator} on a commutative associative algebra $(A,\cdot_A)$ with respect to a representation  $(V;\mu)$ if $T$ satisfies
  \begin{equation}
    T(u)\cdot_A T(v)=T(\mu(T(u))v+\mu(T(v))u),\quad\forall~u,v\in V.
  \end{equation}
In particular, an $\huaO$-operator on a commutative associative
algebra $(A,\cdot_A)$ with respect to the regular representation
is called a {\bf Rota-Baxter operator of weight zero} or briefly a {\bf Rota-Baxter
operator} on $A$.

\begin{lem}{\rm(\cite{BGN2013A})}
  Let $(A,\cdot_A)$ be a commutative associative algebra and $(V;\mu)$ a representation. Let $T:V\rightarrow A$ be an $\huaO$-operator on  $(A,\cdot_A)$ with respect to $(V;\mu)$. Then there exists a Zinbiel algebra structure on $V$ given by
$$
    u\diamond v=\mu(T(u))v, \quad\forall~u,v\in V.
$$
\end{lem}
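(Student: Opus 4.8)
The plan is to verify directly that the proposed multiplication $u\diamond v=\mu(T(u))v$ satisfies the Zinbiel identity
$$x\diamond(y\diamond z)=(y\diamond x)\diamond z+(x\diamond y)\diamond z,\quad\forall~u,v,w\in V.$$
Writing out both sides using only the definition, I would compute the left-hand side as $\mu(T(u))\mu(T(v))w$ and the right-hand side as $\mu(T(\mu(T(v))u))w+\mu(T(\mu(T(u))v))w=\mu\big(T(\mu(T(v))u)+T(\mu(T(u))v)\big)w$. So the whole identity reduces to showing the equality of the two operators
$$\mu(T(u))\mu(T(v))=\mu\big(T(\mu(T(u))v)+T(\mu(T(v))u)\big)$$
acting on $V$.

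The key step is to combine the two defining hypotheses. First, since $(V;\mu)$ is a representation of the commutative associative algebra $(A,\cdot_A)$, we have $\mu(T(u))\mu(T(v))=\mu(T(u)\cdot_A T(v))$. Second, the $\huaO$-operator condition gives precisely
$$T(u)\cdot_A T(v)=T(\mu(T(u))v+\mu(T(v))u).$$
Applying $\mu$ to both sides of this identity and substituting into the previous line yields exactly the operator equality displayed above, and hence the Zinbiel identity. This is the crux of the argument: the associativity/multiplicativity of $\mu$ turns composition of operators into $\mu$ of a product in $A$, and the $\huaO$-operator relation rewrites that product as the symmetrized image that appears on the right-hand side of the Zinbiel identity.

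There is no real obstacle here beyond bookkeeping; the only point deserving attention is that the Zinbiel identity is genuinely the \emph{right} one to target rather than, say, the associativity of the symmetrized product, and that the symmetric combination $\mu(T(u))v+\mu(T(v))u$ in the $\huaO$-operator condition matches the symmetric sum $(y\diamond x)\diamond z+(x\diamond y)\diamond z$ on the right-hand side of the Zinbiel axiom. Since $(A,\cdot_A)$ is commutative, the product $T(u)\cdot_A T(v)$ is symmetric in $u,v$, which is consistent with this symmetrization and confirms that the computation closes. I would therefore present the proof as a short two-line chain of equalities establishing the operator identity, followed by the remark that acting on any $w\in V$ gives the Zinbiel identity for $\diamond$.
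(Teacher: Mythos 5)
Your proof is correct: the representation property $\mu(T(u))\mu(T(v))=\mu(T(u)\cdot_A T(v))$ combined with the $\huaO$-operator identity $T(u)\cdot_A T(v)=T(\mu(T(u))v+\mu(T(v))u)$ gives exactly the operator equality needed for the Zinbiel axiom, and acting on $w$ closes the computation. The paper states this lemma without proof (citing \cite{BGN2013A}), and your direct verification is precisely the standard argument; the only blemish is cosmetic, namely the variable mismatch between $x,y,z$ in your statement of the Zinbiel identity and $u,v,w$ in the quantifier and the computation.
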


A linear map $T:V\longrightarrow \g$ is called an {\bf $\huaO$-operator} on a Lie algebra $(\g,[-,-]_\g)$ with respect to a representation $(V;\rho)$ if $T$ satisfies
\begin{equation}
  [T(u), T(v)]_\g=T\Big(\rho(T(u))(v)-\rho(T(v))(u)\Big),\quad \forall~u,v\in V.
\end{equation}
In particular, an $\huaO$-operator on a Lie algebra
$(\g,[-,-]_\g)$ with respect to the adjoint representation is called a {\bf
Rota-Baxter operator of weight zero} or briefly a {\bf Rota-Baxter operator} on $\g$.

\begin{lem}{\rm(\cite{Bai2007})}
Let $T:V\to \g$ be an $\huaO$-operator  on a Lie algebra $(\g,[-,-]_\g)$ with respect to a representation $(V;\rho)$. Define a multiplication $\ast$ on $V$ by
\begin{equation}
  u\ast v=\rho(T(u))(v),\quad \forall~u,v\in V.
\end{equation}
Then $(V,\ast)$ is a pre-Lie algebra.
 \end{lem}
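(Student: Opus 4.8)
The plan is to verify directly that the associator of $(V,\ast)$ is symmetric in its first two arguments, which is exactly the defining identity of a pre-Lie algebra. First I would unfold the two terms of the associator using the definition $u\ast v=\rho(T(u))(v)$, writing $(u\ast v)\ast w=\rho\big(T(\rho(T(u))v)\big)(w)$ and $u\ast(v\ast w)=\rho(T(u))\big(\rho(T(v))w\big)$, so that
$$(u\ast v)\ast w-u\ast(v\ast w)=\rho\big(T(\rho(T(u))v)\big)(w)-\rho(T(u))\big(\rho(T(v))w\big).$$

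Next I would form the difference of this expression with the same one after swapping $u$ and $v$. The two ``outer'' terms combine, by linearity of $\rho$ and $T$, into $\rho\big(T(\rho(T(u))v-\rho(T(v))u)\big)(w)$. Here the $\huaO$-operator identity enters: by hypothesis $T(\rho(T(u))v-\rho(T(v))u)=[T(u),T(v)]_\g$, so this term equals $\rho([T(u),T(v)]_\g)(w)$.

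Then I would invoke that $(V;\rho)$ is a representation of the Lie algebra, that is $\rho([T(u),T(v)]_\g)=\rho(T(u))\rho(T(v))-\rho(T(v))\rho(T(u))$. Substituting this, the term $\rho([T(u),T(v)]_\g)(w)$ splits precisely into $\rho(T(u))(\rho(T(v))w)-\rho(T(v))(\rho(T(u))w)$, which cancels against the ``nested'' terms coming from the original associator and its $u\leftrightarrow v$ swap. Hence the whole difference vanishes, giving
$$(u\ast v)\ast w-u\ast(v\ast w)=(v\ast u)\ast w-v\ast(u\ast w),$$
which is the pre-Lie identity.

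There is no serious obstacle here: the computation is short and closes cleanly. The only point requiring care is the bookkeeping when subtracting the swapped associator, where one must apply the $\huaO$-operator relation to the combination $\rho(T(u))v-\rho(T(v))u$ and the representation property of $\rho$ in tandem, since neither hypothesis alone forces the associator to be symmetric. The interplay of these two conditions is the heart of the argument.
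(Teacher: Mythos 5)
Your proof is correct: combining the swapped associators, applying linearity of $T$ and $\rho$, the $\huaO$-operator identity, and then the representation property of $\rho$ is exactly the standard argument, and every step closes as you describe. Note that the paper itself states this lemma without proof (it is quoted from the reference \cite{Bai2007}), and your computation coincides with the classical one given there, so there is nothing to add or correct.
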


Let $(V;\rho,\mu)$ be a representation of an $F$-manifold algebra $(A,\cdot_A,[-,-]_A)$.
\begin{defi}
  \begin{itemize}
  \item[{\rm(i)}]A linear operator $T:V\longrightarrow A$ is called  an {\bf $\huaO$-operator on  $A$} if $T$ is both an $\huaO$-operator on the commutative associative algebra $(A,\cdot_A)$ and an $\huaO$-operator on the Lie algebra $(A,[-,-]_A)$;
\item[{\rm(ii)}] A linear operator $\huaB:A\longrightarrow A$ is
called a {\bf
Rota-Baxter operator of weight zero} or briefly a {\bf Rota-Baxter
operator} on $A$, if $\huaB$ is both a Rota-Baxter operator on
the commutative associative algebra $(A,\cdot_A)$ and a
Rota-Baxter operator on the Lie algebra $(A,[-,-]_A)$.
\end{itemize}
\end{defi}

It is obvious that $\huaB: A\longrightarrow A$ is a Rota-Baxter operator on $A$ if and only if $\huaB$ is an  $\huaO$-operator on $A$ with respect to the representation $(A;\ad,\huaL)$.

\begin{ex}\label{ex:id}{\rm
  Let $(A,\diamond,\ast)$ be a pre-$F$-manifold algebra. Then  the identity map $\Id$ is an $\huaO$-operator on $A^c$ with respect  to the representation $(A;L,\frkL)$, where $L$ and $\frkL$ are given by \eqref{eq:defiLpreLie} and \eqref{eq:dendriform-rep} respectively.}
\end{ex}

An $\huaO$-operator on an $F$-manifold algebra gives
a pre-$F$-manifold algebra.

\begin{thm}\label{thm:pre-F-algebra and F-algebra}
Let $(A,\cdot_A,[-,-]_A)$ be an $F$-manifold algebra and $T:V\longrightarrow
A$ an $\huaO$-operator on  $A$ with respect to the representation $(V;\rho,\mu)$. Define new operations $\diamond$ and $\ast$ on $V$ by
$$ u\diamond v=\mu(T(u))v,\quad u\ast v=\rho(T(u))v.$$
Then $(V,\diamond,\ast)$ is a pre-$F$-manifold algebra and $T$ is a homomorphism from $V^c$ to $(A,\cdot_A,[-,-]_A)$.
\end{thm}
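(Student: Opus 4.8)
The plan is to verify the two defining compatibility conditions \eqref{eq:pre-HM 1} and \eqref{eq:pre-HM 2} for the triple $(V,\diamond,\ast)$, having first recorded that the two pieces are already the correct individual structures. Indeed, since $T$ is simultaneously an $\huaO$-operator on the commutative associative algebra $(A,\cdot_A)$ and on the Lie algebra $(A,[-,-]_A)$, the earlier lemmas of Bai \cite{Bai2007} and \cite{BGN2013A} immediately give that $(V,\diamond)$ is a Zinbiel algebra and $(V,\ast)$ is a pre-Lie algebra. So the only genuine content is the interaction encoded by $F_1$ and $F_2$, and the natural strategy is to push everything through $T$ and reduce to the representation axioms \eqref{eq:rep 1}, \eqref{eq:rep 2} of the $F$-manifold algebra $A$.

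First I would compute the induced operations $\cdot$ and $[-,-]$ on $V$. From $u\diamond v=\mu(T(u))v$ one gets $u\cdot v=\mu(T(u))v+\mu(T(v))u$, and from $u\ast v=\rho(T(u))v$ one gets $[u,v]=\rho(T(u))v-\rho(T(v))u$. The key observation, which I expect to be the engine of the whole proof, is that $T$ intertwines these operations with those on $A$: using the $\huaO$-operator identities, $T(u\cdot v)=T(u)\cdot_A T(v)$ and $T([u,v])=[T(u),T(v)]_A$, so that $T:V^c\to A$ is in fact a homomorphism of $F$-manifold algebras (this also proves the final clause of the statement). The next step is to rewrite $F_1(u,v,w)$ and $F_2(u,v,w)$ in terms of the representation maps $R_{\rho,\mu}$ and $S_{\rho,\mu}$ of Definition of representation. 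Substituting the definitions and using $T(u\ast v-v\ast u)=[T(u),T(v)]_A$ one should obtain the clean identities
\begin{eqnarray*}
F_1(u,v,w)&=&R_{\rho,\mu}(T(u),T(v))(w),\\
F_2(u,v,w)&=&S_{\rho,\mu}(T(u),T(v))(w).
\end{eqnarray*}
This mirrors exactly the computation $F_1(x,y,z)=R_{L,\frkL}(x,y)(z)$, $F_2(x,y,z)=S_{L,\frkL}(x,y)(z)$ used in the proof for the sub-adjacent algebra, which is the special case $T=\Id$ of Example \ref{ex:id}.

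Once these two identities are in hand, the verification is mechanical. For \eqref{eq:pre-HM 1} I would expand $F_1(u\cdot v,w,r)=R_{\rho,\mu}(T(u\cdot v),T(w))(r)=R_{\rho,\mu}(T(u)\cdot_A T(v),T(w))(r)$, apply the representation axiom \eqref{eq:rep 1}, and then recognize $\mu(T(u))R_{\rho,\mu}(T(v),T(w))(r)=u\diamond F_1(v,w,r)$ and symmetrically for the other term. For \eqref{eq:pre-HM 2} the analogous substitution uses \eqref{eq:rep 2} together with the relation $P_x(y,z)$ and the fact that $F_1(u,v,w)+F_1(u,w,v)+F_2(v,w,u)$ is the image under the representation of $P_{T(u)}(T(v),T(w))$ — this is the $V$-level analogue of \eqref{eq:pre-HM 3}. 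The main obstacle I anticipate is purely bookkeeping: carefully managing the roles of $\diamond$ versus $\cdot$ and $\ast$ versus $[-,-]$ when moving $\mu(T(-))$ and $\rho(T(-))$ across $R_{\rho,\mu}$ and $S_{\rho,\mu}$, so that each term matches the required shape exactly rather than only up to the commutative/Lie symmetrization. Provided the two boxed identities for $F_1,F_2$ are established correctly, no nontrivial new algebraic input beyond the representation axioms of $A$ is needed, and the theorem follows.
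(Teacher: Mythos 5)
Your proposal is correct and follows essentially the same route as the paper's own proof: both obtain the Zinbiel and pre-Lie structures from the cited lemmas, use the intertwining identities $T(u\cdot v)=T(u)\cdot_A T(v)$ and $T([u,v])=[T(u),T(v)]_A$ to establish $F_1(u,v,w)=R_{\rho,\mu}(T(u),T(v))(w)$ and $F_2(u,v,w)=S_{\rho,\mu}(T(u),T(v))(w)$, and then reduce \eqref{eq:pre-HM 1} and \eqref{eq:pre-HM 2} to the representation axioms \eqref{eq:rep 1} and \eqref{eq:rep 2}, with the sum $F_1(u,v,w)+F_1(u,w,v)+F_2(v,w,u)$ mapped by $T$ onto $P_{T(u)}(T(v),T(w))$ exactly as in the paper's displayed computation. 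The only cosmetic difference is that you isolate the two key identities explicitly before verifying, whereas the paper folds them into a single calculation.
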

\begin{proof}
  First by the fact that $T$ is an $\huaO$-operator on the commutative associative algebra $(A,\cdot_A)$ as well as an $\huaO$-operator on the Lie algebra $(A,[-,-]_A)$ with respect to the representations $(V;\mu)$ and $(V;\rho)$ respectively, we deduce that $(V,\diamond)$ is a Zinbiel algebra and $(V,\ast)$ is a pre-Lie algebra.

Denote by $[u,v]_T:=u\ast v-v\ast u$ and $u\cdot_T v:=u\diamond v+v\diamond u$, we have $T([u,v]_T)=[T(u),T(v)]_A$ and $T(u\cdot_T v)=T(u)\cdot_A T(v)$. By these facts and \eqref{eq:rep 1}, for $v_1,v_2,v_3,v_4\in V$, one has
\begin{eqnarray*}
  &&F_1(v_1\cdot_T v_2,v_3,v_4)-v_1\diamond F_1(v_2,v_3,v_4)-v_2\diamond F_1(v_1,v_3,v_4)\\
  &=&R_{\rho,\mu}(T(v_1)\cdot_A T(v_2),T(v_3))(v_4)-\mu(T(v_1)) R_{\rho,\mu}(T(v_2),T(v_3))(v_4)\\
  &&-\mu(T(v_2)) R_{\rho,\mu}(T(v_1),T(v_3))(v_4)=0,
\end{eqnarray*}
which implies that \eqref{eq:pre-HM 1} holds.

Similarly, by \eqref{eq:rep 2}, one has
\begin{eqnarray*}
  &&(F_1(v_1,v_2,v_3)+F_1(v_1,v_3,v_2)+F_2(v_2,v_3,v_1))\diamond v_4-F_2(v_2,v_3,v_1\diamond v_4)+v_1\diamond F_2(v_2,v_3,v_4)\\
  &=& \mu(P_{T(v_1)}(T(v_2),T(v_3))(v_4)-S_{\rho,\mu}(T(v_2),T(v_3))\mu(T(v_1))(v_4)\\
  &&+\mu(T(v_1)) S_{\rho,\mu}(T(v_2),T(v_3))(v_4)=0,
\end{eqnarray*}
which implies that \eqref{eq:pre-HM 2} holds. Thus, $(V,\diamond,\ast)$ is a  pre-$F$-manifold algebra. It is obvious that $T$ is a homomorphism from $V^c$ to $(A,\cdot_A,[-,-]_A)$.
\end{proof}

\begin{cor}
Let $(A,\cdot_A,[-,-]_A)$ be an $F$-manifold algebra and $T:V\longrightarrow
A$ an $\huaO$-operator on  $A$ with respect to the representation $(V;\rho,\mu)$. Then $T(V)=\{T(v)\mid v\in V\}\subset A$ is a subalgebra of $A$ and there is an induced $F$-manifold algebras structure on $T(V)$ given by
$$T(u)\diamond T(v)=T(u\diamond v),\quad T(u)\ast T(v)=T(u\ast v)$$
for all $u,v\in V$.
\end{cor}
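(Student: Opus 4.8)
The plan is to handle the two assertions separately, putting the only delicate point first. To see that $T(V)$ is a subalgebra of the $F$-manifold algebra $A$, I would invoke the two defining $\huaO$-operator identities directly: for $u,v\in V$,
\[
T(u)\cdot_A T(v)=T\big(\mu(T(u))v+\mu(T(v))u\big),\qquad [T(u),T(v)]_A=T\big(\rho(T(u))v-\rho(T(v))u\big),
\]
so both products lie in $T(V)$, which is a linear subspace of $A$ since $T$ is linear. As the defining axioms of an $F$-manifold algebra (commutativity and associativity of $\cdot_A$, the Jacobi identity, and the Hertling--Manin relation) are inherited by any subspace closed under the two operations, $(T(V),\cdot_A,[-,-]_A)$ is itself an $F$-manifold algebra.

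The substantive point, and the step I expect to be the main obstacle, is the well-definedness of $\diamond$ and $\ast$ on $T(V)$, because $T$ need not be injective. The key lemma I would establish is that $\Ker T$ is invariant under every operator $\mu(T(u))$ and $\rho(T(u))$. Indeed, for $w\in\Ker T$ the associative identity above specializes to $0=T(u)\cdot_A T(w)=T(\mu(T(u))w+\mu(T(w))u)=T(\mu(T(u))w)$, so $\mu(T(u))w\in\Ker T$; likewise the Lie identity gives $0=[T(u),T(w)]_A=T(\rho(T(u))w)$, so $\rho(T(u))w\in\Ker T$. With this lemma, well-definedness is immediate: since $u\diamond v=\mu(T(u))v$ depends on $u$ only through $T(u)$, replacing $u$ by any $u'$ with $T(u')=T(u)$ leaves $u\diamond v$ unchanged, while replacing $v$ by $v'$ with $T(v')=T(v)$ alters $\mu(T(u))v$ by $\mu(T(u))(v-v')\in\Ker T$, hence leaves $T(u\diamond v)$ unchanged. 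The same reasoning applies to $u\ast v=\rho(T(u))v$, so $T(u)\diamond T(v):=T(u\diamond v)$ and $T(u)\ast T(v):=T(u\ast v)$ are unambiguous, and $T$ becomes a surjective map from $V$ onto $T(V)$ intertwining both operations.

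Finally I would transport the structure. By the preceding theorem $(V,\diamond,\ast)$ is a pre-$F$-manifold algebra, and since all its defining relations (the Zinbiel identity, the pre-Lie identity, and the compatibility conditions \eqref{eq:pre-HM 1}--\eqref{eq:pre-HM 2}) are multilinear, they transfer along the surjection $T$ to $(T(V),\diamond,\ast)$; concretely, any instance of such an identity on elements $T(v_1),\dots,T(v_k)$ is the $T$-image of the corresponding identity in $V$. Thus $(T(V),\diamond,\ast)$ is a pre-$F$-manifold algebra, and its sub-adjacent operations recover the subalgebra structure already found, since $T(u)\diamond T(v)+T(v)\diamond T(u)=T(u\diamond v+v\diamond u)=T(u)\cdot_A T(v)$ and $T(u)\ast T(v)-T(v)\ast T(u)=T(u\ast v-v\ast u)=[T(u),T(v)]_A$. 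Hence the induced structure on $T(V)$ is exactly the $F$-manifold subalgebra $(T(V),\cdot_A,[-,-]_A)$ as the sub-adjacent algebra of the pre-$F$-manifold algebra $(T(V),\diamond,\ast)$. Once the invariance of $\Ker T$ is isolated, every remaining step is formal.
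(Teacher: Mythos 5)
Your proof is correct and follows the route the paper intends: the corollary is stated there without proof, as an immediate consequence of Theorem \ref{thm:pre-F-algebra and F-algebra} (the pre-$F$-manifold structure on $V$ plus the fact that $T$ intertwines the operations), which is exactly the skeleton you use. The one piece of genuine content you add --- the invariance of $\Ker T$ under all operators $\mu(T(u))$ and $\rho(T(u))$, which is what makes $T(u)\diamond T(v):=T(u\diamond v)$ and $T(u)\ast T(v):=T(u\ast v)$ well defined when $T$ fails to be injective --- is precisely the detail the paper leaves implicit, and you establish it correctly.
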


\begin{cor}
 Let $(A,\cdot_A,[-,-]_A)$ be an $F$-manifold algebra and $\huaB:A\longrightarrow
A$ a Rota-Baxter operator. Define new operations on $A$ by
$$x\diamond y=\huaB(x)\cdot_A y,\quad x\ast y=[\huaB(x),y]_A.$$
Then $(A,\diamond,\ast)$ is a pre-$F$-manifold algebra and $\huaB$ is a homomorphism from the sub-adjacent $F$-manifold algebras $(A,\cdot_\huaB,[-,-]_\huaB)$ to $(A,\cdot_A,[-,-]_A)$, where $x\cdot_\huaB y=x\diamond y+y\diamond x$ and $[x,y]_\huaB=x\ast y-y\ast x$.

\end{cor}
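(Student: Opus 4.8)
The plan is to recognize this final Corollary as an immediate specialization of Theorem~\ref{thm:pre-F-algebra and F-algebra}. Recall that a Rota-Baxter operator $\huaB:A\longrightarrow A$ on an $F$-manifold algebra is, by definition, an $\huaO$-operator on $A$ with respect to the regular representation $(A;\ad,\huaL)$. So the strategy is simply to set $V=A$, $T=\huaB$, $\rho=\ad$, and $\mu=\huaL$ in the theorem, and then unwind what the induced operations become in this special case. With $\rho(x)=\ad_x=[x,-]_A$ and $\mu(x)=\huaL_x=x\cdot_A(-)$, the formulas $u\diamond v=\mu(T(u))v$ and $u\ast v=\rho(T(u))v$ from the theorem specialize exactly to $x\diamond y=\huaL_{\huaB(x)}y=\huaB(x)\cdot_A y$ and $x\ast y=\ad_{\huaB(x)}y=[\huaB(x),y]_A$, which are precisely the operations defined in the statement.

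First I would invoke Theorem~\ref{thm:pre-F-algebra and F-algebra} directly to conclude that $(A,\diamond,\ast)$ is a pre-$F$-manifold algebra, since $\huaB$ is an $\huaO$-operator on $A$ with respect to $(A;\ad,\huaL)$ and the two defining operations match the theorem's prescription verbatim. Second, the same theorem asserts that $T$ is a homomorphism from the sub-adjacent $F$-manifold algebra $V^c$ to $(A,\cdot_A,[-,-]_A)$. Here the sub-adjacent structure on $V=A$ is built from $\diamond$ and $\ast$ via the standard recipe $x\cdot_\huaB y=x\diamond y+y\diamond x$ and $[x,y]_\huaB=x\ast y-y\ast x$, so $V^c$ is exactly the $F$-manifold algebra $(A,\cdot_\huaB,[-,-]_\huaB)$ appearing in the statement. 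Thus the homomorphism assertion is inherited with no additional work.

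The only genuinely substantive point worth spelling out is that the abstract $\huaO$-operator hypothesis of the theorem is met: one must note that a Rota-Baxter operator on the $F$-manifold algebra is simultaneously a Rota-Baxter operator on $(A,\cdot_A)$ and on $(A,[-,-]_A)$, hence an $\huaO$-operator with respect to the regular representation $(A;\ad,\huaL)$, which is itself a representation of $A$ by Example~\ref{ex:dual representation}. This is exactly the observation recorded just before Example~\ref{ex:id}. I do not expect any real obstacle here: the corollary is a clean translation of the theorem under a fixed choice of representation, and the verification of the compatibility conditions \eqref{eq:pre-HM 1} and \eqref{eq:pre-HM 2} is already carried out in full generality inside the proof of Theorem~\ref{thm:pre-F-algebra and F-algebra}. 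The proof therefore amounts to identifying the specialization and citing the theorem, with the mild bookkeeping of confirming that the induced sub-adjacent operations coincide with $\cdot_\huaB$ and $[-,-]_\huaB$.
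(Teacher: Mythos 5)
Your proposal is correct and is exactly the argument the paper intends: the corollary is stated without proof precisely because it is the specialization of Theorem~\ref{thm:pre-F-algebra and F-algebra} to $V=A$, $T=\huaB$, $(\rho,\mu)=(\ad,\huaL)$, using the paper's own remark that a Rota-Baxter operator is an $\huaO$-operator with respect to the regular representation $(A;\ad,\huaL)$. Your identification of the sub-adjacent structure $V^c$ with $(A,\cdot_\huaB,[-,-]_\huaB)$ and your citation of Example~\ref{ex:dual representation} for the fact that $(A;\ad,\huaL)$ is a representation are both accurate, so nothing is missing.
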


\begin{cor}
 Consider the $F$-manifold algebra $(A,\cdot,[-,-])$ given by Proposition \ref{ex:derivation-HMA}. If $\huaB:A\longrightarrow A$ is a Rota-Baxter operator on the commutative associative algebra $(A,\cdot)$ and satisfies $\huaB\circ D =D\circ B$, then $\huaB:A\longrightarrow A$ is a Rota-Baxter operator on the Lie algebra $(A,[-,-])$.
 Thus $(A,\diamond_\huaB,\ast_\huaB)$ is a pre-$F$-manifold algebra, where
 $$x\diamond_\huaB y=\huaB(x)\cdot y,\quad x\ast_\huaB y=\huaB(x)\cdot D(y)-y\cdot D(\huaB(x))$$
 and $(A,\cdot_\huaB,[-,-]_\huaB)$ is the sub-adjacent $F$-manifold algebra with
 $$x\cdot_\huaB y=x\diamond_\huaB y+y\diamond_\huaB x,\quad [x,y]_\huaB=x\ast_\huaB y-y\ast_\huaB x.$$
\end{cor}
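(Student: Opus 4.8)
The plan is to prove the final corollary in two stages: first establish that $\huaB$ is simultaneously a Rota-Baxter operator on both the commutative associative algebra $(A,\cdot)$ and the Lie algebra $(A,[-,-])$ of the $F$-manifold algebra from Proposition \ref{ex:derivation-HMA}, so that it qualifies as a Rota-Baxter operator on the $F$-manifold algebra in the sense of the preceding definition; then invoke the corollary immediately above (the Rota-Baxter specialization of Theorem \ref{thm:pre-F-algebra and F-algebra}) to conclude that the induced operations $\diamond_\huaB$ and $\ast_\huaB$ give a pre-$F$-manifold algebra. The only genuine content is the first stage, since the second is a direct citation.

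For the commutative associative part, the hypothesis already grants that $\huaB$ is a Rota-Baxter operator on $(A,\cdot)$, so nothing needs checking there. The work is to verify the Lie-algebra Rota-Baxter identity $[\huaB(x),\huaB(y)]=\huaB\big([\huaB(x),y]+[x,\huaB(y)]\big)$ for the bracket $[x,y]=x\cdot D(y)-y\cdot D(x)$ coming from Proposition \ref{ex:derivation-HMA}. First I would expand the left-hand side $[\huaB(x),\huaB(y)] = \huaB(x)\cdot D(\huaB(y)) - \huaB(y)\cdot D(\huaB(x))$ and use the commutation relation $\huaB\circ D = D\circ \huaB$ (written $\huaB\circ D=D\circ B$ in the statement) to replace $D(\huaB(y))$ by $\huaB(D(y))$ and $D(\huaB(x))$ by $\huaB(D(x))$. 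This turns the left side into $\huaB(x)\cdot \huaB(D(y)) - \huaB(y)\cdot \huaB(D(x))$. Then the commutative Rota-Baxter identity $\huaB(u)\cdot\huaB(v)=\huaB\big(\huaB(u)\cdot v + u\cdot\huaB(v)\big)$ applied to each product collapses the expression into $\huaB$ of something.

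The key computation is to check that what lands inside $\huaB$ after this rewriting coincides with $[\huaB(x),y]+[x,\huaB(y)]$. Expanding the target, $[\huaB(x),y]=\huaB(x)\cdot D(y) - y\cdot D(\huaB(x))$ and $[x,\huaB(y)]=x\cdot D(\huaB(y)) - \huaB(y)\cdot D(x)$; applying $\huaB\circ D=D\circ\huaB$ to the terms $D(\huaB(x))$ and $D(\huaB(y))$ and then matching term-by-term against the output of the Rota-Baxter expansions should produce exactly the same elements. I expect the matching to be straightforward once the commutation relation is used consistently, with the commutativity and associativity of $\cdot$ allowing free rearrangement of factors. The final pre-$F$-manifold structure with $x\diamond_\huaB y=\huaB(x)\cdot y$ and $x\ast_\huaB y=\huaB(x)\cdot D(y)-y\cdot D(\huaB(x))$ then follows verbatim from the earlier Rota-Baxter corollary, since $\ast_\huaB$ is precisely $[\huaB(x),y]$ for this bracket.

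The main obstacle, such as it is, will be bookkeeping: ensuring that every application of $\huaB\circ D=D\circ\huaB$ is placed correctly and that the two Rota-Baxter identities (associative and Lie) are not conflated. There is no deep structural difficulty, since the associative Rota-Baxter property is assumed and the Lie Rota-Baxter property reduces, via the derivation-compatibility, to the associative one. I would present the verification as a short chain of equalities rather than a case analysis, and then cite the relevant corollary to finish.
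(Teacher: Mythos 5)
Your proposal is correct and follows the same route the paper intends: verify that $\huaB\circ D=D\circ\huaB$ together with the associative Rota--Baxter identity forces the Lie Rota--Baxter identity for the bracket $[x,y]=x\cdot D(y)-y\cdot D(x)$, then cite the preceding corollary (the Rota--Baxter specialization of Theorem \ref{thm:pre-F-algebra and F-algebra}) to get the pre-$F$-manifold structure. Your term-by-term matching indeed closes: expanding $[\huaB(x),\huaB(y)]$, commuting $D$ past $\huaB$, and applying $\huaB(u)\cdot\huaB(v)=\huaB(\huaB(u)\cdot v+u\cdot\huaB(v))$ yields exactly $\huaB\bigl([\huaB(x),y]+[x,\huaB(y)]\bigr)$ after the same commutation is used on the right-hand side.
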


\begin{ex}{\rm
  We put $A=C^1([0,1])$. Then $(A,\cdot,[-,-])$ with the following multiplication and bracket
  \begin{eqnarray*}
  f\cdot g&=& fg,\\
{[f,g]}&=&fg'-gf',\quad\forall~f,g\in A
  \end{eqnarray*}
  is an $F$-manifold algebra. It is well-known that the integral operator is a Rota-Baxter operator:
$$
   \huaB:A\rightarrow A,\quad \huaB(f)(x):=\int^x_0f(t)dt.
$$
It is easy to see that
$$\huaB\circ \partial_x=\partial_x\circ \huaB=\Id.$$
Thus $(A,\diamond_\huaB,\ast_\huaB)$ is a pre-$F$-manifold algebra, where
 $$f\diamond_\huaB g=g\int^x_0f(t)dt,\quad f\ast_\huaB g=g'\int^x_0f(t)dt-f\cdot g$$
 and $(A,\cdot_\huaB,[-,-]_\huaB)$ is the sub-adjacent $F$-manifold algebra with
 $$f\cdot_\huaB g=f\int^x_0g(t)dt+g\int^x_0f(t)dt,\quad [f,g]_\huaB=f'\int^x_0g(t)dt -g'\int^x_0f(t)dt.$$

}
\end{ex}

\begin{ex}{\rm
Consider the $F$-manifold algebra $(A,\cdot,[-,-])$ given by Example \ref{ex:3-dimensional F-algebra}. It is straightforward to check that $\huaB$ given by
\begin{eqnarray*}
  \huaB(e_1)=re_1,\quad \huaB(e_2)=2se_1+\frac{3}{2}re_2,\quad
  \huaB(e_3)=te_1+3se_2+3re_3,\quad r,s,t\in \K
\end{eqnarray*}
is a Rota-Baxter operator on the $F$-manifold algebra $A$. Thus $(A,\diamond_\huaB,\ast_\huaB)$ is a pre-$F$-manifold algebra, where
 \begin{eqnarray*}
 e_2\diamond_\huaB e_3&=&\frac{3}{2}re_1,\quad e_3\diamond_\huaB e_2=3re_1,\quad e_3\diamond_\huaB e_3=3se_1+3re_2;\\
 e_2\ast_\huaB e_3&=&-\frac{3}{2}are_1,\quad e_3\ast_\huaB e_2=3are_1,\quad e_3\ast_\huaB e_3=-3ase_1
 \end{eqnarray*}
 and $(A,\cdot_\huaB,[-,-]_\huaB)$ is the sub-adjacent $F$-manifold algebra with
\begin{eqnarray*}
e_2\cdot_\huaB e_3&=&\frac{9}{2}re_1,\quad e_3\cdot_\huaB e_3=6se_1+6re_2;\\
{[e_2,e_3]_\huaB}&=&-\frac{9}{2}are_1.
\end{eqnarray*}

}
\end{ex}

At the end of this section, we give a
necessary and sufficient condition on an $F$-manifold algebra admitting a
pre-$F$-manifold algebra structure.

\begin{pro}\label{pro:nsc}
 Let $(A,\cdot_A,[-,-]_A)$ be an $F$-manifold algebra. There is a pre-$F$-manifold algebra structure on $A$ such that its sub-adjacent $F$-manifold algebra is exactly $(A,\cdot_A,[-,-]_A)$ if and only if there exists an invertible $\huaO$-operator on $(A,\cdot_A,[-,-]_A)$.
\end{pro}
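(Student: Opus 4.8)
The plan is to prove both implications through Theorem \ref{thm:pre-F-algebra and F-algebra}, using invertibility to move structures between $A$ and the representation space. The ``only if'' direction is immediate: if $(A,\diamond,\ast)$ is a pre-$F$-manifold algebra whose sub-adjacent $F$-manifold algebra is $(A,\cdot_A,[-,-]_A)=A^c$, then by Example \ref{ex:id} the identity map $\Id:A\to A$ is an $\huaO$-operator on $A^c$ with respect to the representation $(A;L,\frkL)$, and $\Id$ is obviously invertible. This already supplies the required invertible $\huaO$-operator.

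For the converse, let $T:V\to A$ be an invertible $\huaO$-operator with respect to a representation $(V;\rho,\mu)$. By Theorem \ref{thm:pre-F-algebra and F-algebra}, the operations $u\diamond v=\mu(T(u))v$ and $u\ast v=\rho(T(u))v$ make $(V,\diamond,\ast)$ a pre-$F$-manifold algebra and $T$ a homomorphism from $V^c$ to $(A,\cdot_A,[-,-]_A)$; since $T$ is invertible, it is in fact an isomorphism of $F$-manifold algebras. I would then push the pre-$F$-manifold structure forward along $T$, defining
$$x\diamond_A y=T\big(\mu(x)T^{-1}(y)\big),\qquad x\ast_A y=T\big(\rho(x)T^{-1}(y)\big),\quad\forall~x,y\in A.$$
Because $(\diamond_A,\ast_A)$ is obtained from $(\diamond,\ast)$ by conjugation with the linear isomorphism $T$, the defining identities \eqref{eq:pre-HM 1} and \eqref{eq:pre-HM 2} transfer verbatim, so $(A,\diamond_A,\ast_A)$ is again a pre-$F$-manifold algebra.

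It remains to confirm that the sub-adjacent $F$-manifold algebra of $(A,\diamond_A,\ast_A)$ is literally $(A,\cdot_A,[-,-]_A)$, not merely an isomorphic copy. Writing $u=T^{-1}(x)$ and $v=T^{-1}(y)$, one computes $x\diamond_A y+y\diamond_A x=T(u\diamond v+v\diamond u)=T(u\cdot_T v)$, which equals $T(u)\cdot_A T(v)=x\cdot_A y$ by the homomorphism identity $T(u\cdot_T v)=T(u)\cdot_A T(v)$ recorded in the proof of Theorem \ref{thm:pre-F-algebra and F-algebra}; the identical computation with commutators gives $x\ast_A y-y\ast_A x=T([u,v]_T)=[T(u),T(v)]_A=[x,y]_A$. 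Hence the sub-adjacent operations coincide with $\cdot_A$ and $[-,-]_A$ on the nose. This last check is the only substantive point, and it is exactly where invertibility is needed: it allows the structure to be transported onto $A$ itself rather than only up to isomorphism.
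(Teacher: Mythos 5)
Your proposal is correct and follows essentially the same route as the paper: the paper's proof also uses the identity map (with the representation $(A;L,\frkL)$) for the ``only if'' direction, and for the converse it defines the pre-$F$-manifold structure by exactly your formulas $x\diamond y=T(\mu(x)(T^{-1}(y)))$, $x\ast y=T(\rho(x)(T^{-1}(y)))$. The only difference is that the paper states these formulas without further comment, whereas you justify them by transporting the structure from $V$ (Theorem \ref{thm:pre-F-algebra and F-algebra}) along the isomorphism $T$ and explicitly checking that the sub-adjacent operations recover $\cdot_A$ and $[-,-]_A$ on the nose — a worthwhile filling-in of details the paper leaves implicit.
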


\begin{proof} If $T:V\longrightarrow
A$ is an invertible $\huaO$-operator on  $A$ with respect to the representation $(V;\rho,\mu)$, then the compatible pre-$F$-manifold algebra structure on $A$ is given by
$$x\diamond y=T(\mu(x)(T^{-1}(y))),\quad x\ast y=T(\rho(x)(T^{-1}(y)))$$
for all $x,y\in P$.

Conversely, let $(A,\diamond,\ast)$ be a pre-$F$-manifold algebra and $(A,\cdot_A,[-,-]_A)$ the sub-adjacent $F$-manifold algebra. Then the identity map $\Id$ is an $\huaO$-operator on $A$ with respect to the representation $(A;L,\frkL)$.
\end{proof}

\begin{cor}
Let $(A,\cdot_A,[-,-]_A)$ be a  coherence $F$-manifold algebra. Let $\omega\in\wedge^2A^*$ be a cyclic $2$-cocycle in the sense of Connes on the commutative associative algebra $(A,\cdot_A)$, i.e.
$$
 \omega(x\cdot_A y,z)+\omega(y\cdot_A z,x)+\omega(z\cdot_A x,y)=0,\quad\forall~x,y,z\in A,
$$
as well as a symplectic structure on the Lie algebra  $(A,[-,-]_A)$, i.e.
$$\omega([x,y]_A,z)+ \omega([y,z]_A,x)+ \omega([z,x]_A,y)=0,\quad \forall x,y,z\in  A.$$
Then there is a compatible pre-$F$-manifold algebra structure on $A$ given by
$$
\omega(x\diamond y,z)=\omega(y,x\cdot_A z),\quad \omega(x\ast y,z)=\omega(y,[z,x]_A).
$$
\end{cor}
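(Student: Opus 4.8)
The plan is to exhibit the nondegenerate form $\omega$ as an invertible $\huaO$-operator and then apply Proposition \ref{pro:nsc}. First I would introduce the linear map $\omega^\natural:A\to A^*$ given by $\langle\omega^\natural(x),y\rangle=\omega(x,y)$. Since $\omega$ is nondegenerate, $\omega^\natural$ is a linear isomorphism, and I set $T=(\omega^\natural)^{-1}:A^*\to A$, so that $\omega^\natural\circ T=\Id$. Because $(A,\cdot_A,[-,-]_A)$ is a coherence $F$-manifold algebra, Example \ref{ex:dual representation} guarantees that $(A^*;\ad^*,-\huaL^*)$ is a representation of $A$; this is precisely the point at which the coherence hypotheses \eqref{eq:coh1} and \eqref{eq:coh2} are needed, since without them the dual triple fails to be a representation and one cannot even speak of an $\huaO$-operator on $A$ with respect to it.

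Next I would verify that $T$ is an $\huaO$-operator on $A$ with respect to $(A^*;\ad^*,-\huaL^*)$, that is, simultaneously on $(A,\cdot_A)$ and on $(A,[-,-]_A)$. Put $x=T(\xi),\,y=T(\eta)$, so that $\xi=\omega^\natural(x),\,\eta=\omega^\natural(y)$. Applying $\omega^\natural$ to the defining identity $T(\xi)\cdot_A T(\eta)=T\bigl(-\huaL^*(T(\xi))\eta-\huaL^*(T(\eta))\xi\bigr)$ and pairing with $z\in A$, the relation $\langle\huaL^*(x)\alpha,z\rangle=-\langle\alpha,x\cdot_A z\rangle$ turns the associative $\huaO$-operator condition into $\omega(x\cdot_A y,z)=\omega(y,x\cdot_A z)+\omega(x,y\cdot_A z)$, which by antisymmetry of $\omega$ and commutativity of $\cdot_A$ is exactly the Connes cyclic cocycle identity. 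In the same way, applying $\omega^\natural$ to $[T(\xi),T(\eta)]_A=T\bigl(\ad^*(T(\xi))\eta-\ad^*(T(\eta))\xi\bigr)$ and using $\langle\ad^*(x)\alpha,z\rangle=-\langle\alpha,[x,z]_A\rangle$ turns the Lie $\huaO$-operator condition into $\omega([x,y]_A,z)=\omega(x,[y,z]_A)-\omega(y,[x,z]_A)$, which by antisymmetry of $\omega$ and of the bracket is exactly the symplectic $2$-cocycle identity. Both hold by hypothesis, so $T$ is an invertible $\huaO$-operator on the $F$-manifold algebra $A$.

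It then remains to identify the resulting structure. By Proposition \ref{pro:nsc}, the compatible pre-$F$-manifold algebra structure on $A$ is $x\diamond y=T\bigl(-\huaL^*(x)(T^{-1}(y))\bigr)$ and $x\ast y=T\bigl(\ad^*(x)(T^{-1}(y))\bigr)$. Applying $\omega^\natural=T^{-1}$ and pairing with $z$, the identity $\omega^\natural\circ T=\Id$ gives $\omega(x\diamond y,z)=\langle-\huaL^*(x)\omega^\natural(y),z\rangle=\omega(y,x\cdot_A z)$ and $\omega(x\ast y,z)=\langle\ad^*(x)\omega^\natural(y),z\rangle=\omega(y,[z,x]_A)$, which are exactly the asserted formulas. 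The computations contain no genuine obstacle; the only delicate point is the sign bookkeeping in the definitions of $\huaL^*$ and $\ad^*$, since a misplaced sign would interchange the two arguments of $\diamond$ or $\ast$, and one must consistently exploit the antisymmetry of $\omega$ together with the commutativity of $\cdot_A$ and the antisymmetry of $[-,-]_A$ to match the $\huaO$-operator conditions to the given cocycle conditions.
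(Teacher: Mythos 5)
Your proposal is correct and follows essentially the same route as the paper's own proof: both exhibit $(\omega^\sharp)^{-1}$ (your $\omega^\natural$) as an invertible $\huaO$-operator on $A$ with respect to the dual representation $(A^*;\ad^*,-\huaL^*)$, whose existence is exactly what the coherence hypothesis guarantees, and then invoke Proposition \ref{pro:nsc}. You additionally write out the verification that the two $\huaO$-operator identities unwind to the cyclic and symplectic cocycle conditions and that the induced operations match the stated formulas, details the paper leaves implicit; these computations are accurate, including the sign bookkeeping.
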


 \begin{proof}
   Since $(A,\cdot_A,[-,-]_A)$ is a  coherence $F$-manifold algebra, $(A^*;\ad^*,-\huaL^*)$ is a representation of $A$. By the fact that $\omega$ is a cyclic $2$-cocycle, $(\omega^\sharp)^{-1}$ is an $\huaO$-operator on the commutative associative algebra $(A,\cdot_A)$ with respect to the representation $(A^*,-\huaL^*)$, where $\omega^\sharp:A\longrightarrow A^*$ is defined by $\langle\omega^\sharp(x),y\rangle=\omega(x,y)$. By the fact that $\omega$ is a symplectic structure, $(\omega^\sharp)^{-1}$ is an $\huaO$-operator on the Lie algebra $(A,[\cdot,\cdot]_A)$ with respect to the representation $(A^*,\ad^*)$. Thus, $(\omega^\sharp)^{-1}$ is an $\huaO$-operator on the  coherence $F$-manifold algebra $(A,\cdot_A,[-,-]_A)$ with respect to the representation $(A^*;\ad^*,-\huaL^*)$. By Proposition \ref{pro:nsc}, there is a compatible pre-$F$-manifold algebra structure on $A$ given as above.
 \end{proof}

\section{Dual pre-$F$-manifold algebras and average operators on $F$-manifold algebras}\label{sec:dual pre-F-algebras}
It is well known that the Koszul dual of the operad $\PreLie$ of pre-Lie algebras is the operad $\Perm$ of permutative algebras
(\cite{Lod2}). Recall that a {\bf permutative algebra} is a pair
$(A,\bullet)$, where $A$ is a vector space and $\bullet:A\otimes
A\longrightarrow A$ is a bilinear multiplication satisfying that
for all $x,y,z\in A$,
\begin{equation}
  x\bullet(y\bullet z)=(x\bullet y)\bullet z=(y\bullet x)\bullet z.
\end{equation}

It is well known that the Koszul dual of the operad $\Zinb$ of Zinbiel algebras is the operad $\Leib$ of Leibniz algebras (\cite{ChaLiv}).
Recall that a {\bf Leibniz algebra} is a pair $(\g,\{-,-\})$,
where $\g$ is a vector space and $\{-,-\}:\g\otimes
\g\longrightarrow \g$ is a bilinear bracket satisfying that for
all $x,y,z\in \g$,
\begin{equation}
 \{x,\{y,z\}\}=\{\{x,y\},z\}+\{y,\{x,z\}\}.
\end{equation}

Now we give the definition of a dual pre-$F$-manifold algebra.

\begin{defi}
A {\bf dual pre-$F$-manifold algebra} is a triple $(A,\bullet,\{-,-\})$, where $(A,\bullet)$ is a permutative algebra and $(A,\{-,-\})$ is a Leibniz algebra, such that for all $x,y,z,w\in A$, the following conditions hold:
\begin{eqnarray}
\label{eq:dule relation1}G_1(x\bullet y,z,w)&=&x\bullet G_1(y,z,w)+y\bullet G_1(x,z,w),\\
\label{eq:dule relation2}G_2(y\bullet x,z,w)&=& y\bullet G_2(x,z,w)-G_1(y,z,w)\bullet x,\\
\label{eq:dule relation3}\{x,y\}\bullet z+\{y,x\} \bullet z&=& 0,
\end{eqnarray}
where $G_1,G_2:\otimes^3 A\longrightarrow A$ are defined by
  \begin{eqnarray}
   G_1(x,y,z)&=&\{x,y\bullet z\}-\{x,y\}\bullet z-y\bullet \{x,z\} ,\\
   G_2(x,y,z)&=&\{y\bullet z,x\}-z\bullet \{y,x\}-y\bullet\{z,x\}.
  \end{eqnarray}
\end{defi}

Recall that an {\bf average operator} on a commutative associative algebra $(A,\cdot)$ is a linear map $\alpha:A\rightarrow A$ such that
$$\alpha(x)\cdot \alpha(y)=\alpha(\alpha(x)\cdot y),\quad\forall~x,y\in A.$$
\begin{lem}\label{lem:permutative algebra}{\rm(\cite{A2})}
  Let $(A,\cdot)$  be a  commutative associative algebra and $\alpha:A\rightarrow A$ an average operator. Define a multiplication $\bullet$ on $A$ by
\begin{equation}
 x\bullet y= \alpha(x)\cdot y,\quad \forall~x,y\in A.
\end{equation}
Then $(A,\bullet)$ is a permutative algebra.
\end{lem}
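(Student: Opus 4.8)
The plan is to verify directly the two defining identities of a permutative algebra, namely $x\bullet(y\bullet z)=(x\bullet y)\bullet z$ and $(x\bullet y)\bullet z=(y\bullet x)\bullet z$, by unfolding the definition $x\bullet y=\alpha(x)\cdot y$ and repeatedly invoking the associativity and commutativity of $\cdot$ together with the average operator condition $\alpha(x)\cdot\alpha(y)=\alpha(\alpha(x)\cdot y)$.

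First I would treat the associativity identity. Expanding the left-hand side gives $x\bullet(y\bullet z)=\alpha(x)\cdot(\alpha(y)\cdot z)$, and associativity of $\cdot$ rewrites this as $(\alpha(x)\cdot\alpha(y))\cdot z$. The average operator condition then converts the inner product into $\alpha(\alpha(x)\cdot y)=\alpha(x\bullet y)$, so the expression becomes $\alpha(x\bullet y)\cdot z=(x\bullet y)\bullet z$. This settles the first equality.

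Next I would handle the permutation-type identity. Expanding $(y\bullet x)\bullet z$ yields $\alpha(\alpha(y)\cdot x)\cdot z$; applying the average operator condition in the form $\alpha(\alpha(y)\cdot x)=\alpha(y)\cdot\alpha(x)$ and then the commutativity of $\cdot$ gives $(\alpha(x)\cdot\alpha(y))\cdot z$, which coincides with the expansion of $(x\bullet y)\bullet z$ obtained in the previous step. Hence all three products agree, and $(A,\bullet)$ is a permutative algebra.

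There is essentially no serious obstacle here: the statement reduces to a short chain of substitutions. The only point that requires a little care is applying the average operator identity in both directions — once to collapse $\alpha(x)\cdot\alpha(y)$ into $\alpha(\alpha(x)\cdot y)$, and once to expand $\alpha(\alpha(y)\cdot x)$ into $\alpha(y)\cdot\alpha(x)$ — and then using commutativity of $\cdot$ to reconcile the two resulting expressions into the common normal form $(\alpha(x)\cdot\alpha(y))\cdot z$.
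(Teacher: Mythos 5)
Your proof is correct and complete: both permutative identities reduce, via associativity and commutativity of $\cdot$ together with the average operator identity $\alpha(\alpha(x)\cdot y)=\alpha(x)\cdot\alpha(y)$, to the common normal form $(\alpha(x)\cdot\alpha(y))\cdot z$, exactly as you compute. The paper itself gives no proof of this lemma (it is quoted from Aguiar's work), and your direct verification is precisely the standard argument that result rests on.
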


Recall that an {\bf average operator} on a Lie algebra $(\g,[-,-])$ is a linear map $\alpha:\g\rightarrow \g$ such that
$$[\alpha(x), \alpha(y)]=\alpha([\alpha(x), y]),\quad\forall~x,y\in \g.$$
\begin{lem}\label{lem:Leibniz algebra}{\rm(\cite{A2})}
  Let  $(\g,[-,-])$ be a  Lie algebra and $\alpha:\g\rightarrow \g$ an average operator. Define a bracket $\{-,-\}$ on $\g$ by
\begin{equation}
 \{x, y\}= [\alpha(x), y],\quad \forall~x,y\in \g.
\end{equation}
Then $(\g,\{-,-\})$ is a Leibniz algebra.
\end{lem}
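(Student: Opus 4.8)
The plan is to verify directly that the bracket $\{-,-\}$ satisfies the (left) Leibniz identity
$$\{x,\{y,z\}\}=\{\{x,y\},z\}+\{y,\{x,z\}\},$$
and to reduce this identity to the Jacobi identity of $(\g,[-,-])$ by means of the average operator hypothesis. First I would unfold each of the three terms using the definition $\{x,y\}=[\alpha(x),y]$. The left-hand side gives immediately
$$\{x,\{y,z\}\}=[\alpha(x),[\alpha(y),z]],$$
and the second summand on the right is equally direct:
$$\{y,\{x,z\}\}=[\alpha(y),[\alpha(x),z]].$$
The only term requiring the hypothesis is the first summand on the right, where a nested application of $\alpha$ appears.

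For that term I would compute
$$\{\{x,y\},z\}=\{[\alpha(x),y],z\}=[\alpha([\alpha(x),y]),z],$$
and then invoke the defining relation of an average operator on the Lie algebra $(\g,[-,-])$, namely $[\alpha(x),\alpha(y)]=\alpha([\alpha(x),y])$, to replace $\alpha([\alpha(x),y])$ by $[\alpha(x),\alpha(y)]$. This yields
$$\{\{x,y\},z\}=[[\alpha(x),\alpha(y)],z].$$
Writing $a=\alpha(x)$ and $b=\alpha(y)$, the Leibniz identity to be established becomes exactly
$$[a,[b,z]]=[[a,b],z]+[b,[a,z]],$$
which is precisely the Jacobi identity of $(\g,[-,-])$ in its Leibniz (Loday) form. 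Hence the identity holds for all $x,y,z\in\g$, and $(\g,\{-,-\})$ is a Leibniz algebra.

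I expect no genuine obstacle here; the single point of care is the application of the average operator relation to collapse the nested expression $\alpha([\alpha(x),y])$ into the commutator $[\alpha(x),\alpha(y)]$ of two $\alpha$-images, since it is exactly this step that converts the putative Leibniz identity into the Jacobi identity already available in $\g$. I would remark that this is the same mechanism that drives the parallel commutative-associative statement in Lemma~\ref{lem:permutative algebra}: there the average operator relation turns associativity of $(A,\cdot)$ into the permutative identity, just as here it turns the Jacobi identity into the Leibniz identity.
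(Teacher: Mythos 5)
Your proof is correct. The paper itself gives no argument for this lemma --- it is quoted from Aguiar's paper \cite{A2} --- so there is nothing to compare against; your direct verification (unfold the new bracket, apply the average-operator identity $\alpha([\alpha(x),y])=[\alpha(x),\alpha(y)]$ exactly once to the nested term $\{\{x,y\},z\}$, and recognize the resulting identity in $a=\alpha(x)$, $b=\alpha(y)$ as the Jacobi identity in Leibniz form) is precisely the standard argument that the citation suppresses, and it correctly uses the left Leibniz identity as the paper defines it.
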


\begin{defi}
  Let $(A,\cdot_A,[-,-]_A)$ be an $F$-manifold algebra. A linear operator $\alpha:A\longrightarrow A$ is called  an {\bf average operator on  $A$} if $\alpha$ is both an average operator on the commutative associative algebra $(A,\cdot_A)$ and an average operator on the Lie algebra $(A,[-,-]_A)$.
\end{defi}

\begin{thm}
 Let $(A,\cdot_A,[-,-]_A)$ be an $F$-manifold algebra and $\alpha:A\rightarrow A$ an average operator. Define new operations on $A$ by
\begin{equation}
x\bullet y= \alpha(x)\cdot_A y\quad \mbox{  and  }\quad \{x, y\}= [\alpha(x), y]_A,\quad \forall~x,y\in A.
\end{equation}
Then $(A,\bullet,\{-,-\})$ is a dual pre-$F$-manifold algebras.
\end{thm}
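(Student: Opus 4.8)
The plan is to reduce everything to the Hertling-Manin relation \eqref{eq:HM relation} governing the operator $P$ of the underlying $F$-manifold algebra. First I would record the structural facts that come for free: since $\alpha$ is an average operator on the commutative associative algebra $(A,\cdot_A)$, Lemma \ref{lem:permutative algebra} gives that $(A,\bullet)$ is a permutative algebra; and since $\alpha$ is an average operator on the Lie algebra $(A,[-,-]_A)$, Lemma \ref{lem:Leibniz algebra} gives that $(A,\{-,-\})$ is a Leibniz algebra. It then remains to verify the three compatibility conditions \eqref{eq:dule relation1}--\eqref{eq:dule relation3}.

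The decisive step is to rewrite $G_1$ and $G_2$ in terms of $P$. Using $\{x,y\}=[\alpha(x),y]_A$ and $x\bullet y=\alpha(x)\cdot_A y$ together with the two average identities $\alpha([\alpha(u),v]_A)=[\alpha(u),\alpha(v)]_A$ and $\alpha(\alpha(u)\cdot_A v)=\alpha(u)\cdot_A\alpha(v)$, a direct computation should yield
\begin{eqnarray*}
G_1(x,y,z)&=&P_{\alpha(x)}(\alpha(y),z),\\
G_2(x,y,z)&=&-P_x(\alpha(y),\alpha(z)),
\end{eqnarray*}
where the second uses the antisymmetry of $[-,-]_A$ to flip the outer bracket. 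Condition \eqref{eq:dule relation3} is then immediate: $\{x,y\}\bullet z+\{y,x\}\bullet z=\big([\alpha(x),\alpha(y)]_A+[\alpha(y),\alpha(x)]_A\big)\cdot_A z=0$ by antisymmetry.

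For \eqref{eq:dule relation1}, I would substitute the formula for $G_1$ and use $\alpha(x\bullet y)=\alpha(x)\cdot_A\alpha(y)$, so the left side becomes $P_{\alpha(x)\cdot_A\alpha(y)}(\alpha(z),w)$ while the right side is $\alpha(x)\cdot_A P_{\alpha(y)}(\alpha(z),w)+\alpha(y)\cdot_A P_{\alpha(x)}(\alpha(z),w)$; these agree by the Hertling-Manin relation applied to $\alpha(x),\alpha(y),\alpha(z),w$. For \eqref{eq:dule relation2}, the left side becomes $-P_{\alpha(y)\cdot_A x}(\alpha(z),\alpha(w))$, which Hertling-Manin expands as $-\alpha(y)\cdot_A P_x(\alpha(z),\alpha(w))-x\cdot_A P_{\alpha(y)}(\alpha(z),\alpha(w))$; the right side works out to $-\alpha(y)\cdot_A P_x(\alpha(z),\alpha(w))-\alpha\big(P_{\alpha(y)}(\alpha(z),w)\big)\cdot_A x$, so matching the two (using commutativity of $\cdot_A$) reduces to the single auxiliary identity $\alpha\big(P_{\alpha(y)}(\alpha(z),w)\big)=P_{\alpha(y)}(\alpha(z),\alpha(w))$.

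This auxiliary identity is where I expect the real work to be, and it is the main obstacle. The key observation is that for any element $c$ lying in the image of $\alpha$ one has $\alpha(c\cdot_A w)=c\cdot_A\alpha(w)$ and $\alpha([c,w]_A)=[c,\alpha(w)]_A$, both being direct consequences of the average axioms. Applying these to $c=\alpha(y)$, to $c=\alpha(z)$, and to $c=[\alpha(y),\alpha(z)]_A=\alpha([\alpha(y),z]_A)$ — each of which lies in the image of $\alpha$ — one can push $\alpha$ through every term of $P_{\alpha(y)}(\alpha(z),w)$, converting the sole free argument $w$ into $\alpha(w)$ and producing exactly $P_{\alpha(y)}(\alpha(z),\alpha(w))$. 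With this identity in hand, \eqref{eq:dule relation2} closes and the theorem follows.
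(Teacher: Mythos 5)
Your proposal is correct and follows essentially the same route as the paper's proof: the same reductions $G_1(x,y,z)=P_{\alpha(x)}(\alpha(y),z)$ and $G_2(x,y,z)=-P_x(\alpha(y),\alpha(z))$, relation \eqref{eq:dule relation3} by antisymmetry, and the Hertling--Manin relation applied to $\alpha(x),\alpha(y),\alpha(z),w$ for \eqref{eq:dule relation1} and to $\alpha(y),x,\alpha(z),\alpha(w)$ for \eqref{eq:dule relation2}. The only difference is that you isolate and actually prove the auxiliary identity $\alpha\bigl(P_{\alpha(y)}(\alpha(z),w)\bigr)=P_{\alpha(y)}(\alpha(z),\alpha(w))$, which the paper uses silently inside its displayed computation; this is a clarification of a glossed-over step, not a change of method.
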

\begin{proof}
  Let $(A,\cdot_A,[-,-]_A)$ be an $F$-manifold algebra. Since $\alpha:A\rightarrow A$ is an average operator, we have $[\alpha(x), \alpha(y)]_A=\alpha([\alpha(x), y]_A)$ and $\alpha(x)\cdot_A \alpha(y)=\alpha(\alpha(x)\cdot_A y)$. By Lemma \ref{lem:permutative algebra} and Lemma \ref{lem:Leibniz algebra}, $(A,\cdot_A)$ is a permutative algebra and $(A,[-,-]_A)$ is a Leibniz algebra. By a direct calculation, we have
  \begin{eqnarray}\label{eq:dule relation4}
  P_{\alpha(x)}(\alpha(y),z)=P_{\alpha(x)}(z,\alpha(y))=G_1(x,y,z),\quad  P_x(\alpha(y),\alpha(z))=-G_2(x,y,z).
  \end{eqnarray}
By the definition of an average operator and \eqref{eq:dule relation4}, we have
\begin{eqnarray*}
  &&P_{\alpha(x)\cdot_A \alpha(y)}(\alpha(z),w)-\alpha(x)\cdot_A P_{\alpha(y)}(\alpha(z),w)-\alpha(y)\cdot_A P_{\alpha(x)}(\alpha(z),w)\\
 &=& P_{\alpha(x\cdot_A y)}(\alpha(z),w)-x\bullet P_{\alpha(y)}(\alpha(z),w)-y\bullet P_{\alpha(x)}(\alpha(z),w)\\
 &=&G_1(x\bullet y,z,w)-x\bullet G_1(y,z,w)-y\bullet G_1(x,z,w),
\end{eqnarray*}
which implies that \eqref{eq:dule relation1} holds.

Similarly, we have
\begin{eqnarray*}
  &&P_{x\cdot_A \alpha(y)}(\alpha(z),\alpha(w))-x\cdot_A P_{\alpha(y)}(\alpha(z),\alpha(w))-\alpha(y)\cdot_A P_{x}(\alpha(z),\alpha(w))\\
 &=& P_{y\bullet x}(\alpha(z),w)-x\cdot_A \alpha(G_1(y,z,w))-y\bullet P_{x}(\alpha(z),\alpha(w))\\
 &=&-G_2(y\bullet x,z,w)-G_1(y,z,w)\bullet x+y\bullet G_2(x,z,w),
\end{eqnarray*}
which implies that \eqref{eq:dule relation2} holds.

Since $\alpha([\alpha(x),y]_A)=\alpha([x,\alpha(y)]_A)=[\alpha(x),\alpha(y)]_A)$, we have
\begin{eqnarray*}
\{x,y\}\bullet z+\{y,x\}\bullet z&=&\alpha ([\alpha(x),y]_A)\cdot_A z+ ([\alpha(y),x]_A)\cdot_A z\\
&=&([\alpha(x),\alpha(y)]_A+ [\alpha(y),\alpha(x)]_A)\cdot_A z=0,
\end{eqnarray*}
which implies that \eqref{eq:dule relation3} holds.
\end{proof}

\begin{pro}
 Consider the $F$-manifold algebra $(A,\cdot,[-,-])$ given by Proposition \ref{ex:derivation-HMA}. If $\alpha:A\longrightarrow A$ is an average operator on the commutative associative algebra $(A,\cdot)$ and satisfies $\alpha\circ D =D\circ \alpha$, then $\alpha:A\longrightarrow A$ is an average operator on the Lie algebra $(A,[-,-])$.
 Thus $(A,\bullet,\{-,-\})$ is a dual pre-$F$-manifold algebra, where
 $$x\bullet y= \alpha(x)\cdot y\quad \mbox{  and  }\quad \{x, y\}= \alpha(x)\cdot D(y)-y\cdot D(\alpha(x)),\quad \forall~x,y\in A.$$
\end{pro}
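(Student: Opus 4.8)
The plan is to reduce the statement to the theorem proved immediately before it, which already establishes that an average operator on an $F$-manifold algebra yields a dual pre-$F$-manifold algebra via precisely the operations $x\bullet y=\alpha(x)\cdot y$ and $\{x,y\}=[\alpha(x),y]$. Since $\alpha$ is assumed to be an average operator on the commutative associative algebra $(A,\cdot)$, the only thing left to check is that $\alpha$ is also an average operator on the sub-adjacent Lie algebra $(A,[-,-])$, i.e.\ that $[\alpha(x),\alpha(y)]=\alpha([\alpha(x),y])$ for all $x,y\in A$. Recall from Proposition~\ref{ex:derivation-HMA} that the bracket is $[x,y]=x\cdot D(y)-y\cdot D(x)$, which carries no dependence on the weight $a$, so only the derivation $D$ enters the verification.

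First I would expand the left-hand side directly from the bracket formula, obtaining $[\alpha(x),\alpha(y)]=\alpha(x)\cdot D(\alpha(y))-\alpha(y)\cdot D(\alpha(x))$, and then apply the commutation hypothesis $D\circ\alpha=\alpha\circ D$ to rewrite this as $\alpha(x)\cdot\alpha(D(y))-\alpha(y)\cdot\alpha(D(x))$. For the right-hand side I would expand $[\alpha(x),y]=\alpha(x)\cdot D(y)-y\cdot D(\alpha(x))$, apply $\alpha$, replace $D(\alpha(x))$ by $\alpha(D(x))$, and then invoke the commutative average identity $\alpha(u)\cdot\alpha(v)=\alpha(\alpha(u)\cdot v)$ twice: once to get $\alpha(\alpha(x)\cdot D(y))=\alpha(x)\cdot\alpha(D(y))$, and once, after using commutativity of $\cdot$ to write $y\cdot\alpha(D(x))=\alpha(D(x))\cdot y$, to get $\alpha(\alpha(D(x))\cdot y)=\alpha(D(x))\cdot\alpha(y)$. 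This produces $\alpha([\alpha(x),y])=\alpha(x)\cdot\alpha(D(y))-\alpha(D(x))\cdot\alpha(y)$, which coincides with the left-hand side after a single use of commutativity of $\cdot$. Hence $\alpha$ is an average operator on $(A,[-,-])$.

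With both average properties in hand, the preceding theorem applies verbatim and shows that $(A,\bullet,\{-,-\})$ is a dual pre-$F$-manifold algebra; substituting $[\alpha(x),y]=\alpha(x)\cdot D(y)-y\cdot D(\alpha(x))$ into $\{x,y\}=[\alpha(x),y]$ recovers the stated formula. I do not expect any genuine obstacle here: the argument is a short direct computation, and the only care required is in the bookkeeping---applying the commutation relation $D\circ\alpha=\alpha\circ D$ before the average identity, so that every product to which the average property is applied already has the form $\alpha(u)\cdot v$.
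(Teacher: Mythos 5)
Your proof is correct and is essentially the paper's argument: the paper's own proof just says ``It follows by a direct calculation,'' and your computation—expanding $[\alpha(x),\alpha(y)]$ and $\alpha([\alpha(x),y])$ via the bracket $[x,y]=x\cdot D(y)-y\cdot D(x)$, using $D\circ\alpha=\alpha\circ D$, the commutative average identity, and commutativity of $\cdot$—is exactly that calculation, after which the preceding theorem on average operators yields the dual pre-$F$-manifold algebra structure.
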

\begin{proof}
  It follows by a direct calculation.
\end{proof}

\begin{ex}{\rm
Consider the $F$-manifold algebra $(A,\cdot,[-,-])$ given by Example \ref{ex:3-dimensional F-algebra}. It is straightforward to check that $\alpha$ given by
\begin{eqnarray*}
 \alpha(e_1)=re_1,\quad \alpha(e_2)=se_1+re_2,\quad
  \alpha(e_3)=te_1+se_2+re_3,\quad r,s,t\in \K
\end{eqnarray*}
is an average operator on the $F$-manifold algebra $A$. Thus $(A,\bullet,\{-,-\})$ is a dual pre-$F$-manifold algebra, where
 \begin{eqnarray*}
 e_2\bullet e_3&=&re_1,\quad e_3\bullet e_2=re_1,\quad e_3\bullet e_3=se_1+re_2;\\
 \{e_2,e_3\}&=&-are_1,\quad \{e_3, e_2\}=are_1,\quad \{e_3, e_3\}=-ase_1\quad a,r,s\in \K.
 \end{eqnarray*}

}
\end{ex}

 \end{document}